\DeclareMathAlphabet\mathbfcal{OMS}{cmsy}{b}{n}
\newcommand{\mathbi}[1]{{\boldsymbol #1}}
\def\R{\mathbb{R}}
\def\err{\mathsf{err}}
\def\P{\mathbfcal{P}}
\def\Proj{\P_{\!\mesh}}
\def\<{\langle}
\def\>{\rangle}
\def\dsp{\displaystyle} 
\def\div{{\rm div}}
\def\trrec{\mathbb{T}}
\def\norm#1#2{\Vert#1\Vert_{#2}}
\newcounter{cst}
\def \ctel#1{C_{\refstepcounter{cst}\label{#1}\thecst}}
\def \cter#1{C_{\ref{#1}}}
\def\bt{\begin{theorem}}
\def\et{\end{theorem}}
\def\bl{\begin{lemma}}
\def\el{\end{lemma}}
\def\bc{\begin{corollary}}
\def\ec{\end{corollary}}
\def\bd{\begin{definition}}
\def\ed{\end{definition}}
\def\br{\begin{remark}}
\def\er{\end{remark}}
\def\tW{\widetilde{W}}
\def\ad{{\rm ad}}
\def\Prh{{\rm Pr}_h}
\def\Uad{\mathcal U_\ad}
\def\Uadh{\mathcal U_{\ad,h}}
\def\Uh{\mathcal U_{h}}
\def\bU{\overline{U}}
\def\bP{\overline{P}}
\def\cv{K}
\newcommand{\polyd}{{\mathfrak{T}}}
\newcommand{\mesh}{{\mathcal T}}
\newcommand{\edge}{{\sigma}}
\newcommand{\edges}{{\mathcal E}}              
\newcommand{\edgescv}{{{\edges}_\cv}}
\newcommand{\centers}{\mathcal{P}}
\newcommand{\x}{\mathbi{x}}
\newcommand{\bu}{\overline{u}}
\def\bub{\overline{u}_b}
\def\bubh{\overline{u}_{b,h}}
\newcommand{\bvarphi}{\mathbi{\varphi}}
\newcommand{\be}{\begin{equation}}
\newcommand{\ee}{\end{equation}}
\newcommand{\disc}{{\mathcal D}}
\renewcommand{\O}{\Omega}
\def\dr{\partial}
\def\bfn{\mathbf{n}}
\renewcommand{\d}{{\rm d}}
\newcommand{\ba}{\begin{array}{llll}   }
\newcommand{\bac}{\begin{array}{c}}
\newcommand{\bari}{\begin{array}{r}}
\newcommand{\ea}{\end{array}}
\def\SCAL#1#2{\left\llbracket\,#1\,\left|\,#2\,\right.\right\rrbracket}
\newcommand{\NORM}[1]{{\left\vert\kern-0.25ex\left\vert\kern-0.25ex\left\vert #1 
    \right\vert\kern-0.25ex\right\vert\kern-0.25ex\right\vert}}
\def\minmod{\mathop{\rm minmod}}
\newcommand{\by}{\overline y}
\newcommand{\bp}{\overline  p}
\newcommand{\tu}{\widetilde{u}}
\newcommand{\ud}{\overline{u}_d}
\newcommand{\yd}{\overline{y}_d}
\newcommand{\udh}{\widehat{u}_d}
\def\hat#1{\widehat{#1}}
\def\assum#1{{\rm\textbf{(A#1)}}}
\newtheorem{theorem}{Theorem}[section]
\newtheorem{remark}[theorem]{Remark}
\newtheorem{lemma}[theorem]{Lemma} 
\newtheorem{definition}[theorem]{Definition}
\newtheorem{proposition}[theorem]{Proposition}
\newtheorem{corollary}[theorem]{Corollary}
\numberwithin{equation}{section}
\def\WS{{\rm WS}}
\def\Xint#1{\mathchoice
{\XXint\displaystyle\textstyle{#1}}%
{\XXint\textstyle\scriptstyle{#1}}%
{\XXint\scriptstyle\scriptscriptstyle{#1}}%
{\XXint\scriptscriptstyle\scriptscriptstyle{#1}}%
\!\int}
\def\XXint#1#2#3{{\setbox0=\hbox{$#1{#2#3}{\int}$ }
\vcenter{\hbox{$#2#3$ }}\kern-.6\wd0}}
\def\dashint{\Xint-}
\newcounter{cexp}
\def\terml#1{T_{\refstepcounter{cexp}\@bsphack
\protected@write\@auxout{}%
           {\string\newlabel{#1}{{\thecexp}{\thepage}}}\thecexp}}
\begin{document}
\title[GDM for optimal control problems, and super-convergence]{The gradient discretisation method for optimal control problems, with super-convergence
for non-conforming finite elements and mixed-hybrid mimetic finite differences}
\author{J\'erome Droniou}
\address{School of Mathematical Sciences, Monash University, Clayton,
Victoria 3800, Australia.
\texttt{jerome.droniou@monash.edu}}
\author{Neela Nataraj}
\address{Department of Mathematics, Indian Institute of Technology Bombay, Powai, Mumbai 400076, India.
\texttt{neela@math.iitb.ac.in}}
\author{Devika Shylaja}
\address{IITB-Monash Research Academy, Indian Institute of Technology Bombay, Powai, Mumbai 400076, India.
\texttt{devikas@math.iitb.ac.in}}

\date{\today}


\maketitle

\begin{abstract}
In this paper, optimal control problems governed by diffusion equations with Dirichlet and Neumann boundary conditions are investigated in the framework of the gradient discretisation method. Gradient schemes are defined for the optimality system of the control problem. Error estimates for state, adjoint and control variables are derived. Superconvergence results for gradient schemes under realistic regularity assumptions on the exact solution is discussed. These super-convergence results are shown to apply to non-conforming $\mathbb{P}_1$ finite elements,
and to the mixed/hybrid mimetic finite differences. Results of numerical experiments are demonstrated for the conforming, non-conforming and mixed-hybrid mimetic finite difference schemes.
\end{abstract}

\medskip

{\scriptsize
\textbf{Keywords}: elliptic equations, optimal control, numerical schemes, error estimates,
super-convergence, gradient discretisation method, gradient schemes, non-conforming $\mathbb{P}_1$ finite elements, mimetic finite differences.

\smallskip

\textbf{AMS subject classifications}: 49J20, 49M25, 65N15, 65N30.
}

\section{Introduction}

This paper is concerned with numerical schemes for second order distributed optimal control problems governed by diffusion equations with Dirichlet and Neumann boundary conditions (BC). We present
basic convergence results and super-convergence results for the state, adjoint and control variables.
Our results cover various numerical methods, including conforming Galerkin methods, non-confor\-ming
finite elements, and mimetic finite differences. This is achieved by using the framework of the
gradient discretisation method.

\medskip

The gradient discretisation method (GDM) is a generic framework for the convergence analysis of numerical methods for diffusion equations.
The GDM consists in replacing the continuous space and
operators by discrete ones in the weak formulation of the partial differential equation (PDE). The set of discrete elements thus chosen is called a gradient discretisation (GD),
and the scheme obtained by using these elements is a gradient scheme (GS). The variety of
possible choices of GDs results in as many different GSs, allowing the GDM to cover a wide
range of numerical methods (finite elements, mixed finite elements, finite volume, mimetic methods, etc.).
Reference
\cite{DEH15} presents the methods known to be GDMs, and \cite{dro-14-deg,dro-14-sto,dro-12-gra,DL14,eym-12-stef,eym-12-sma,zamm2013} provide a few models on which the convergence analysis can be carried out within this framework; see also \cite{koala} for a complete presentation of the GDM for various boundary conditions and models.

In this paper, we discretise optimal control problems governed by diffusion equations using the gradient
discretisation method. A gradient scheme is defined for the optimal control problem by discretising the
corresponding optimality system, which involve state, adjoint and control variables.
Error estimates of two kinds are derived for the state, adjoint and control variables.
Firstly, we establish basic error estimates in a very generic setting. Secondly, considering
slightly more restrictive assumptions on the admissible control set, we derive super-convergence results
for all three variables. The theoretical results are substantiated by numerical experiments.

\medskip

Numerical methods for second-order optimal control problems governed by Dirichlet BC have been
studied in various articles (see, e.g., \cite{TAARGW,YCDY,AKBV,MTV,CMAR} for distributed control, \cite{TAMMJPAR,SMRRBV} for boundary control, and references therein). For conforming and mixed finite element methods, superconvergence result of control has been derived in \cite{YCheng,supercv_mixed,CMAR}. For the finite element analysis of Neumann boundary control problems, see \cite{TAJPAR,TAJPAR15,ECMMFT,KP15,MMAR}.
However, to the best of our knowledge, super-convergence has not been studied for classical non-conforming methods for control problems governed by second order linear elliptic problems. One of the consequences of
our generic analysis is to establish superconvergence results for several conforming and non-conforming
numerical methods covered by gradient schemes -- in particular, the classical 
Crouzeix-Raviart finite element method and the mixed-hybrid mimetic finite difference schemes.

Note that we only consider linear control problems here, but the GDM has been designed to also
deal with non-linear models. So for non-linear state equations that are amenable to error estimates
(e.g. the $p$-Laplace equation \cite[Theorem 3.28]{koala}), an adaptation of the results presented here
is conceivable.

\medskip

The paper is organised as follows. Subsection \ref{sec:model} defines the distributed optimal control problem governed by the diffusion equation with homogeneous Dirichlet BC. Two particular cases of our main results are stated in Subsection \ref{sec:schemes}; these cases cover non-conforming finite element methods and mixed-hybrid mimetic finite
difference schemes. In Section \ref{sec:defGS}, the GDM is introduced, the concept of GD is defined and the properties on the spaces and mappings that are important for the convergence analysis of the
resulting GS are stated. The basic error estimates and superconvergence results for the GDM applied to the control problems are presented in Section \ref{section3}.
The superconvergence for the control variable is obtained under a superconvergence assumption on the underlying state and adjoint equations which, if not already known, can be checked for
various gradient schemes by using the improved $L^2$ estimate of \cite{jd_nn}. Note that our superconvergence results are twofold. Under generic assumptions on the GD,
which allow for local mesh refinements, we prove an $\mathcal O(h^{2-\epsilon})$ super-convergence
with $\epsilon>0$ in dimension 2 and $\epsilon=1/6$ in dimension 3. Under an $L^\infty$-bound assumption on the
solution to the GS, which for most methods requires the quasi-uniformity of the meshes,
we prove a full $\mathcal O(h^2)$ convergence result.
Section \ref{sec:proofs} deals with the proof of the main results which are stated in Section \ref{section3}.
The superconvergence is established by following the ideas developed in \cite{CMAR} for conforming
finite elements. In Section \ref{section5}, the distributed and boundary optimal control problems with Neumann BC is presented. The GDM for Neumann BC is discussed in this section and the core properties that the GDs must satisfy to provide a proper approximation of given problem are highlighted. We present the results of some numerical experiments in Section \ref{sec_examples}. An appendix, Section \ref{sec:LinftyHMM} recalls the GD corresponding to mixed hybrid mimetic schemes and derives $L^{\infty}$ estimates. 

Since the paper deals with a variety of methods, we conclude this introduction with a list of some abbreviations used throughout the paper (see Table \ref{table:1}).
\begin{table}[h!]
\begin{center}
\begin{tabular}{|p{2cm}|p{10cm}|} 
		\hline
		Abbreviations & Meaning  \\ \hline
   	FE& Finite Element\\ \hline
	 GDM & Gradient Discretisation Method, see Section \ref{sec:defGS}\\ \hline
	 GD & Gradient Discretisation, see Definition \ref{def:GD}\\ \hline
	 GS & Gradient Scheme, see \eqref{base.GS} for PDEs\\ \hline
	 MFD & Mimetic Finite Difference method, a family of numerical schemes
	for diffusion equations on generic meshes. hMFD is the hybrid Mimetic Finite Difference method, a member of this family based on unknowns located in the cell and on the faces.\\ \hline
	 HMM & Hybrid Mimetic Mixed method, a family of numerical methods for diffusion
	equations on generic meshes, that contains hMFD as a particular case. See Section \ref{sec:LinftyHMM} for a presentation of the gradient discretisation corresponding to the HMM scheme.\\ \hline
	\end{tabular}
\end{center}
\caption{List of abbreviations}
\label{table:1}
\end{table}

\subsection{The optimal control problem for homogeneous Dirichlet BC} \label{sec:model}

Consider the distributed optimal control problem governed by the diffusion equation defined by
\begin{subequations}\label{cost_main}
\begin{align}
&  { \min_{u \in \Uad} J(y, u) \quad \textrm{ subject to } }  \label{cost}\\
&  {-\div(A\nabla y) = f+ u  \quad \mbox{ in } \Omega,  }\label{state1} \\
&  {\hspace{1.9cm}y = 0\quad\mbox{ on $\partial\O$},} \label{state2}
\end{align}
\end{subequations} 
where  $\Omega \subsetneq \R^n \; (n \ge 2)$ is a bounded domain with boundary $\partial \Omega$;
$y$ is the state variable, and $u$ is the control variable;
\be \label{costfunctional}
J(y, u) :=\frac{1}{2}\norm{y- \yd}{L^2(\O)}^2 + \frac{\alpha}{2} \norm{u-\ud}{L^2(\O)}^2 \ee
is the cost functional, $\alpha>0$ is a fixed regularization parameter, $\yd\in L^2(\O)$ is the desired state variable and  $\ud\in L^2(\O)$ is the desired control variable; $A:\O\rightarrow M_n(\R)$
is a measurable, bounded and uniformly elliptic matrix-valued function such
that (for simplicity purposes) $A(\x)$ is symmetric for a.e.\ $\x\in \O$; $f\in L^2(\O)$; $\Uad \subset L^2(\Omega)
$ is the non-empty, convex and closed set of admissible controls.

\medskip
It is well known that given $u \in \Uad$, there exists a unique weak solution $y(u) \in H^1_0(\Omega):=\{w\in H^1(\O)\,:w=0\mbox{ on }\partial\O\}$ of \eqref{state1}-\eqref{state2}.
That is, for $u \in \Uad$, there exists a unique  $y(u) \in H^1_0(\Omega)$ such that for all ${w}\in H^1_0(\O)$,
\begin{equation}\label{weak_state}
a(y(u),w)=\int_\O (f+u) {w} \d\x,
\end{equation}
where $a(z,w)=\int_\O A\nabla z\cdot\nabla w \d\x$. The term $y(u)$ is the {state} {associated} with the control $u$.

\medskip

In the following, we denote by $\|\cdot\|$ and $(\cdot,\cdot)$, the norm and
scalar product in $L^2(\O)$ (or $L^2(\O)^n$ for vector-valued functions).
The convex control problem \eqref{cost_main} has a unique weak solution $(\by,\bu) \in H^1_0(\O) \times \Uad $. Also there exists a co-state $\bp \in H^1_0(\O)$ such that the triplet $(\by, \bp, \bu) \in H^1_0(\O) \times H^1_0(\O) \times \Uad$ satisfies the Karush-Kuhn-Tucker (KKT) optimality conditions \cite[Theorem 1.4]{jl}:
\begin{subequations} \label{continuous}
\begin{align}
& a(\by,w) = (f + \bu, w) \,  & \forall \: w \in H^1_0(\O), \label{state_cont}\\
& a(w,\bp) = (\by-\yd, w)\,  &  \forall \: w \in H^1_0(\O), \label{adj_cont}\\
&(\bp+\alpha(\bu-\ud),v-\bu)\geq 0\,  & \forall \: v\in \Uad. \label{opt_cont}
\end{align}
\end{subequations}

\subsection{Two particular cases of our main results} \label{sec:schemes}

Our analysis of numerical methods for \eqref{continuous} is based
on the abstract framework of the gradient discretisation method. To give an idea of the extent
of our main results, let us consider two particular schemes, based
on a mesh $\mesh$ of $\O$. We assume here that $\Uad=\{v \in L^2(\O)\,:\,a\le v\le b \mbox{ a.e.}\}$ for some constants $a,b$ (possibly infinite)
and, to simplify the presentation, that $\ud=0$.
Set $P_{[a,b]}(s)=\min(b,\max(a,s))$. Let $\tu$ be a post-processed control, whose scheme-dependent definition is given below.

\begin{itemize}
\item nc$\mathbb{P}_1$/$\mathbb{P}_0$: $\mesh$ is a conforming
triangular/tetrahedral mesh, the state and adjoint unknowns $(\by,\bp)$
are approximated using non-conforming $\mathbb{P}_1$ finite elements, and
the control $\bu$ is approximated using piecewise constant functions on $\mesh$. 

\noindent We then let the post-processed continuous control be $\tu=\bu$.
\smallskip

\item hMFD \cite{ABV13}: $\mesh$ is a polygonal/polyhedral mesh,
the state and adjoint unknowns $(\by,\bp)$ are approximated using mixed-hybrid mimetic
finite differences (hMFD), and
the control $\bu$ is approximated using piecewise constant functions on $\mesh$.
The hMFD schemes form a sub-class of the hybrid mimetic mixed (HMM) methods \cite{dro-10-uni,dro-12-gra}
presented in the appendix (Section \ref{sec:LinftyHMM}); see also Section \ref{sec_examples} for
corresponding numerical tests.

\noindent We then define a post-processed continuous control
$\tu$ by 
\[
\tu_{|K}=P_{[a,b]}(-\alpha^{-1}\bp(\overline{\x}_K))\quad\mbox{ for all $K\in\mesh$},
\]
where $\overline{\x}_K$ denotes the centroid of the cell $K$.
\end{itemize}

In either case, the post-processed discrete control is $\tu_h=P_{[a,b]}(-\alpha^{-1}\bp_h)$, where $\bp_h$ denotes the discrete co-state. 
One of the consequences of our first main theorem is the following super-convergence
result on the control, under
standard regularity assumptions on the mesh and the data:
there exists $C$ that depends only on $\O$, $A$, $\alpha$, $a$, $b$, $\bu$,
and the shape regularity of $\mesh$, such that
\be\label{rate-cv.nc.hmm}
\norm{\tu-\tu_h}{}\le C h^r (1+\norm{\yd}{H^1(\O)}
+\norm{f}{H^1(\O)}+\norm{\ud}{H^2(\O)}),
\ee
where $r=2-\epsilon$ (for any $\epsilon>0$) if $n=2$, and $r=\frac{11}{6}$ if $n=3$.
This estimate is also valid for conforming $\mathbb{P}_1$ finite elements.

\smallskip

Under the additional assumption of quasi-uniformity of the mesh (that is, each cell has a measure
comparable to $h^n$), our second main theorem shows that
\eqref{rate-cv.nc.hmm} can be improved into a full quadratic rate of convergence:
\be\label{rate-cv.nc.hmm.h2}
\norm{\tu-\tu_h}{}\le C h^2 (1+\norm{\yd}{H^1(\O)}
+\norm{f}{H^1(\O)}+\norm{\ud}{H^2(\O)}).
\ee
The quasi-uniformity assumption prevents us from considering
local mesh refinement, so \eqref{rate-cv.nc.hmm.h2} is not ensured in these cases.
On the contrary, the rate \eqref{rate-cv.nc.hmm} still holds true for
locally refined meshes. Moreover, in dimension $n=2$, the $h^{2-\epsilon}$ rate
in \eqref{rate-cv.nc.hmm} is numerically indistinguishable from a full super-convergence $h^2$ rate.
If $n=3$, the $h^{\frac{11}{6}}$ rate of convergence
remains very close to $h^2$. To compare, for $h=10^{-6}$ (which is
well below the usual mesh sizes in 3D computational tests) we have $h^2/h^{\frac{11}{6}}=10^{-1}$.

Precise statement of the assumptions and the proofs of \eqref{rate-cv.nc.hmm} and \eqref{rate-cv.nc.hmm.h2} 
are given in Corollary \ref{cor:nc.hmm}.

\section{The gradient discretisation method for the control problem}\label{sec:defGS}

The gradient discretisation method (GDM) consists in writing numerical schemes,
called gradient schemes (GS), by replacing in the weak formulation
of the problem the continuous space and operators by discrete ones \cite{koala,dro-12-gra,eym-12-sma}. These discrete
space and operators are given by a gradient discretisation (GD).

\begin{definition}[Gradient discretisation for homogeneous Dirichlet BC]\label{def:GD}
A gradient discretisation for homogeneous Dirichlet BC is given by $\disc=(X_{\disc,0},\Pi_\disc,\nabla_\disc)$ such that
\begin{itemize}
\item the set of discrete unknowns (degrees of freedom) $X_{\disc,0}$ is a finite dimensional real vector space,
\item $\Pi_\disc:X_{\disc,0}  \rightarrow L^2(\O)$ is a linear mapping that reconstructs a function from
the degrees of freedom,
\item $\nabla_\disc:X_{\disc,0}  \rightarrow L^2(\O)^n$ is a linear mapping that reconstructs
a gradient from the degrees of freedom. It must be chosen such that
$\norm{\nabla_\disc \cdot}{}$ is a norm on $X_{\disc,0}$.
\end{itemize}
\end{definition}
Let $\disc=(X_{\disc,0},\Pi_\disc,\nabla_\disc)$ be a GD in the sense of the above definition. If $F \in L^2(\O) $, then the related gradient scheme for a linear elliptic problem
\be\label{base}
\left\{
\ba
-\div(A\nabla \psi)=F&\mbox{ in $\O$},\\
\psi=0&\mbox{ on $\dr\O$}
\ea
\right.
\ee
is obtained by writing the
weak formulation of \eqref{base} with the continuous spaces, function and gradient
replaced with their discrete counterparts:
\be\label{base.GS}
\mbox{Find $\psi_\disc\in X_{\disc,0}$ such that, for all $w_\disc\in X_{\disc,0}$, }
a_\disc(\psi_\disc,w_\disc)=(F,\Pi_\disc w_\disc),
\ee
where $a_\disc(\psi_\disc,w_\disc)=\int_\O A\nabla_\disc \psi_\disc\cdot\nabla_\disc w_\disc\d\x$.

\begin{remark}
The inclusion of a scheme $\mathcal S$ for \eqref{base} into the GDM
consists in finding a suitable gradient discretisation $\disc$ such that $\mathcal S$
is algebraically identical to \eqref{base.GS}. We refer to \cite{DEH15} for the proofs that various
classical methods fit into the GDM.
\end{remark}

Let $\Uh$ be a finite-dimensional subspace of $L^2(\Omega)$,
and $\Uadh = \Uad \cap \Uh$. A gradient discretisation $\disc$ being given,
the corresponding GS for \eqref{continuous} consists in 
seeking $(\by_{\disc}, \bp_{\disc}, \bu_{h})\in X_{\disc,0} \times X_{\disc,0} \times \Uadh$ such that
\begin{subequations} \label{discrete_kkt}
\begin{align}
& a_{\disc}(\by_{\disc},w_{\disc}) = (f + \bu_{h}, \Pi_\disc w_{\disc}) \,
&\forall \: w_{\disc} \in  X_{\disc,0},  \label{discrete_state} \\
& a_{\disc}(w_{\disc},\bp_{\disc}) = (\Pi_\disc \by_{\disc}-\yd, \Pi_\disc w_{\disc})  \,&  \forall \: w_{\disc} \in  X_{\disc,0}, \label{discrete_adjoint} \\
&(\Pi_\disc\bp_{\disc} +\alpha(\bu_h-\ud),v_h-\bu_h)\geq     0\,
&\forall \: v_h \in  \Uadh. \label{opt_discrete}
\end{align}
\end{subequations} 

Arguing as in \cite[Theorem 2.25]{tf}, it is straightforward to see that \eqref{discrete_kkt} is equivalent to the
following minimisation problem:
\begin{equation}\label{min_pb_discrete}
\begin{aligned}
&  { \min_{u_h \in \Uadh} \frac{1}{2}\norm{\Pi_\disc y_\disc -\yd}{}^2 + \frac{\alpha}{2}\norm{u_h-\ud}{}^2
\quad \textrm{ subject to } } \\
&  y_\disc\in X_{\disc,0} \mbox{ and, for all $w_\disc\in X_{\disc,0}$, }
 a_\disc(y_\disc,w_\disc)=(f+u_h,\Pi_\disc w_\disc).
\end{aligned}
\end{equation}
Existence and uniqueness of a solution to \eqref{min_pb_discrete}, and thus to \eqref{discrete_kkt},
follows from standard variational theorems.

\subsection{Results on the GDM for elliptic PDEs}

We recall here basic notions and known results on the GDM for elliptic PDEs.

The accuracy of a GS \eqref{base.GS} is measured by three quantities.
The first one, which ensures the \emph{coercivity} of the method, controls
the norm of $\Pi_\disc$.
\be\label{def.CD}
C_\disc := \max_{w\in X_{\disc,0}\setminus \{0\}} \frac{\norm{\Pi_\disc w}{}}{\norm{\nabla_\disc w}{}}.
\ee
The second measure involves an estimate of the interpolation error, called the GD-\emph{consistency} 
(or consistency, for short) in the framework of the GDM. It corresponds to the interpolation error in the finite elements
nomenclature.
\be\label{def.SD}
\begin{aligned}
&\forall \varphi\in H^1_0(\O),\,\,
S_\disc(\varphi)=\min_{w\in X_{\disc,0}}\left(\norm{\Pi_\disc w-\varphi}{}
+\norm{\nabla_\disc w-\nabla\varphi}{}\right).
\end{aligned}
\ee

Finally, we measure the \emph{limit-conformity} of a GD by
defining
\be\label{def.WD}
\begin{aligned}
&\forall \bvarphi\in H_{\div}(\O),\,\,\,
W_\disc(\bvarphi)=\max_{w\in X_{\disc,0}\setminus \{0\}}\frac{1}{\norm{\nabla_\disc w}{}}
\left| \tW_\disc(\bvarphi, w)\right|,
\end{aligned}
\ee
where $H_{\div}(\O)=\{\bvarphi\in L^2(\O)^n\,:\,\div(\bvarphi)\in  L^2(\O)\}$ and
\be\label{def:tW}
\tW_\disc(\bvarphi, w) =\dsp\int_\O \left( \Pi_\disc w \: \div(\bvarphi)
+\nabla_\disc w\cdot\bvarphi\right) \d\x.
\ee
We use the following notation.
\begin{equation}\label{notation:lesssim}
\begin{aligned}
&X\lesssim Y\mbox{ means that }X\le CY\mbox{ for some $C$ depending}\\
&\mbox{only on $\O$, $A$ and an upper bound of $C_\disc$}.
\end{aligned}
\end{equation}

The following basic error estimate on GSs is standard, see \cite[Theorem 3.2]{koala}.

\begin{theorem}\label{th:error.est.PDE} Let $\disc$ be a GD in the
sense of Definition \ref{def:GD}, $\psi$ be the solution
to \eqref{base}, and $\psi_\disc$  be the solution to \eqref{base.GS}. Then
\begin{align}
\norm{\Pi_\disc \psi_\disc-\psi}{}+\norm{\nabla_\disc \psi_\disc-\nabla\psi}{}
\lesssim \WS_\disc(\psi),
\end{align}
where 
\be \label{def.ws}
\WS_\disc(\psi)= W_\disc(A\nabla \psi)+S_\disc(\psi)
\ee
$S_\disc$ is defined by \eqref{def.SD} and $W_\disc$ is defined by \eqref{def.WD}.
\end{theorem}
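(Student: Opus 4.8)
The plan is to run the standard ``third Strang lemma'' argument for the GDM, combining the limit-conformity measure $W_\disc$, the consistency measure $S_\disc$, and the coercivity of $A$. The starting point is that, since $\psi$ solves \eqref{base}, the field $\bvarphi:=A\nabla\psi$ belongs to $H_{\div}(\O)$ with $\div(\bvarphi)=-F$. Feeding this into the definition \eqref{def.WD}--\eqref{def:tW} of $W_\disc$ gives, for every $w_\disc\in X_{\disc,0}$,
\[
\left|\int_\O\big(-F\,\Pi_\disc w_\disc+\nabla_\disc w_\disc\cdot A\nabla\psi\big)\,\d\x\right|\le W_\disc(A\nabla\psi)\,\norm{\nabla_\disc w_\disc}{}.
\]
Using the scheme \eqref{base.GS} to replace $\int_\O F\,\Pi_\disc w_\disc\,\d\x$ by $a_\disc(\psi_\disc,w_\disc)=\int_\O A\nabla_\disc\psi_\disc\cdot\nabla_\disc w_\disc\,\d\x$, this becomes the key energy-error bound
\[
\left|\int_\O A(\nabla\psi-\nabla_\disc\psi_\disc)\cdot\nabla_\disc w_\disc\,\d\x\right|\le W_\disc(A\nabla\psi)\,\norm{\nabla_\disc w_\disc}{}\qquad\forall\,w_\disc\in X_{\disc,0}.
\]

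Next I would choose an interpolant $w\in X_{\disc,0}$ nearly realising the minimum defining $S_\disc(\psi)$ in \eqref{def.SD}, so that $\norm{\Pi_\disc w-\psi}{}$ and $\norm{\nabla_\disc w-\nabla\psi}{}$ are each (essentially) bounded by $S_\disc(\psi)$. Setting $z_\disc:=\psi_\disc-w\in X_{\disc,0}$ and testing the energy bound with $w_\disc=z_\disc$, the uniform ellipticity of $A$ (with constant $\lambda>0$) gives $\lambda\norm{\nabla_\disc z_\disc}{}^2\le\int_\O A\nabla_\disc z_\disc\cdot\nabla_\disc z_\disc\,\d\x$, and writing $\nabla_\disc z_\disc=(\nabla_\disc\psi_\disc-\nabla\psi)+(\nabla\psi-\nabla_\disc w)$ in the second factor yields
\[
\lambda\norm{\nabla_\disc z_\disc}{}^2\le\int_\O A(\nabla_\disc\psi_\disc-\nabla\psi)\cdot\nabla_\disc z_\disc\,\d\x+\int_\O A(\nabla\psi-\nabla_\disc w)\cdot\nabla_\disc z_\disc\,\d\x.
\]
The first term is controlled by $W_\disc(A\nabla\psi)\norm{\nabla_\disc z_\disc}{}$ via the displayed bound, and the second by $\Lambda\,S_\disc(\psi)\norm{\nabla_\disc z_\disc}{}$ using the boundedness of $A$, Cauchy--Schwarz and the choice of $w$. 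Dividing through by $\norm{\nabla_\disc z_\disc}{}$ gives $\norm{\nabla_\disc z_\disc}{}\lesssim\WS_\disc(\psi)$.

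Finally I would recombine by the triangle inequality. For the gradient, $\norm{\nabla_\disc\psi_\disc-\nabla\psi}{}\le\norm{\nabla_\disc z_\disc}{}+\norm{\nabla_\disc w-\nabla\psi}{}\lesssim\WS_\disc(\psi)$; for the function, $\norm{\Pi_\disc\psi_\disc-\psi}{}\le\norm{\Pi_\disc z_\disc}{}+\norm{\Pi_\disc w-\psi}{}$, where $\norm{\Pi_\disc z_\disc}{}\le C_\disc\norm{\nabla_\disc z_\disc}{}\lesssim\WS_\disc(\psi)$ by \eqref{def.CD} (the factor $C_\disc$ being absorbed into $\lesssim$) and $\norm{\Pi_\disc w-\psi}{}\le S_\disc(\psi)$. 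Adding the two estimates gives the claim. The one genuinely delicate step is the first paragraph: recognising that $A\nabla\psi$ is the correct argument for $W_\disc$, and that after substituting the scheme the residual $\int_\O A(\nabla\psi-\nabla_\disc\psi_\disc)\cdot\nabla_\disc w_\disc\,\d\x$ is exactly what $W_\disc(A\nabla\psi)$ measures; everything afterward is a routine coercivity/C\'ea-type estimate.
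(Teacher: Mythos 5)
Your proof is correct, and it is essentially the standard argument that the paper itself does not reproduce but delegates to the cited reference (\cite[Theorem 3.2]{koala}): the same third-Strang-lemma structure, using $W_\disc(A\nabla\psi)$ to bound the residual $\int_\O A(\nabla\psi-\nabla_\disc\psi_\disc)\cdot\nabla_\disc w_\disc\,\d\x$, a minimiser of \eqref{def.SD} as interpolant, coercivity of $A$, and the definition \eqref{def.CD} of $C_\disc$ (legitimately absorbed into $\lesssim$ by \eqref{notation:lesssim}) to pass from the gradient to the function reconstruction. No gaps.
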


\begin{remark}[Rates of convergence for the PDE]\label{rates.PDE} For all classical first-order methods based on meshes with mesh parameter ``$h$'',
$\mathcal O(h)$ estimates can be obtained for $W_\disc(A\nabla \psi)$ and $S_\disc(\psi)$, 
if $A$ is Lipschitz continuous and $\psi\in H^2(\O)$ (see \cite[Chapter 8]{koala}).
Theorem \ref{th:error.est.PDE} then gives a linear rate of convergence
for these methods.
\end{remark}

\section{Main results: basic error estimate and super-convergence} \label{section3}
In this section, the main contributions are stated and the assumptions are discussed in details. The proofs of the results are presented in Section 4.
\subsection{Basic error estimate for the GDM for the control problem}

To state the error estimates, let $\Prh:L^2(\O) \rightarrow \Uh$ be the $L^2$ orthogonal projector on $\Uh$ for the standard scalar product.

\begin{theorem}[Control estimate]\label{theorem.control.DB}
Let $\disc$ be a \textrm{GD}, 
$(\by,\bp,\bu)$ be the solution to \eqref{continuous} and
$(\by_\disc,\bp_\disc,\bu_h)$ be the solution to \eqref{discrete_kkt}.
We assume that
\be\label{stab.proj}
\Prh(\Uad)\subset \Uadh.
\ee
Then,
\be\label{est.basic.u}
\begin{aligned}
\sqrt{\alpha}\norm{\bu-\bu_h}{}\lesssim{}& \sqrt{\alpha}\norm{\alpha^{-1}\bp-\Prh(\alpha^{-1}\bp)}{}+(\sqrt{\alpha}+1)\norm{\bu-\Prh\bu}{}\\
&+\sqrt{\alpha}\norm{\ud-\Prh\ud}{}+\frac{1}{\sqrt{\alpha}} \WS_\disc(\bp)+\WS_\disc(\by).
\end{aligned}
\ee
\end{theorem}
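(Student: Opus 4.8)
The plan is to compare the continuous variational inequality \eqref{opt_cont} and the discrete one \eqref{opt_discrete}, using the stability assumption \eqref{stab.proj} to produce an admissible test function on each side. Since $\bu\in\Uad$, the hypothesis \eqref{stab.proj} guarantees that $\Prh\bu\in\Uadh$, so I can test \eqref{opt_discrete} with $v_h=\Prh\bu$. Conversely, every $v_h\in\Uadh\subset\Uad$ is admissible in \eqref{opt_cont}, and I will test the continuous inequality with $v=\bu_h$. Adding the two resulting inequalities is the standard device that cancels the leading terms and yields a coercive bound on $\bu-\bu_h$. Concretely, \eqref{opt_cont} with $v=\bu_h$ gives $(\bp+\alpha(\bu-\ud),\bu_h-\bu)\ge0$, and \eqref{opt_discrete} with $v_h=\Prh\bu$ gives $(\Pi_\disc\bp_\disc+\alpha(\bu_h-\ud),\Prh\bu-\bu_h)\ge0$. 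Summing these and regrouping, the term $\alpha(\bu-\bu_h,\bu_h-\bu)=-\alpha\norm{\bu-\bu_h}{}^2$ appears and provides the coercivity, while the remaining terms must be controlled by the right-hand side of \eqref{est.basic.u}.

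The second ingredient is to replace the discrete co-state $\Pi_\disc\bp_\disc$ by the continuous co-state $\bp$ wherever it appears, introducing the error $\Pi_\disc\bp_\disc-\bp$. To estimate this error I would not compare $\bp_\disc$ directly with $\bp$, because $\bp_\disc$ solves the adjoint equation \eqref{discrete_adjoint} with right-hand side $\Pi_\disc\by_\disc-\yd$ rather than $\by-\yd$. Instead I introduce an intermediate adjoint state $\widehat p_\disc\in X_{\disc,0}$ solving the GS \eqref{base.GS} for the \emph{continuous} source $\by-\yd$; then Theorem \ref{th:error.est.PDE} applied to this problem bounds $\norm{\Pi_\disc\widehat p_\disc-\bp}{}$ by $\WS_\disc(\bp)$. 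The difference $\bp_\disc-\widehat p_\disc$ solves a GS with source $\Pi_\disc\by_\disc-\by$, so by the coercivity constant $C_\disc$ (i.e. the $\lesssim$ notation) it is controlled by $\norm{\Pi_\disc\by_\disc-\by}{}$. Similarly I must control the state error $\norm{\Pi_\disc\by_\disc-\by}{}$ by introducing an intermediate state $\widehat y_\disc$ solving \eqref{base.GS} with source $f+\bu$: Theorem \ref{th:error.est.PDE} gives $\norm{\Pi_\disc\widehat y_\disc-\by}{}\lesssim\WS_\disc(\by)$, while $\by_\disc-\widehat y_\disc$ solves a GS with source $\bu_h-\bu$, hence is bounded by $\norm{\bu-\bu_h}{}$.

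The delicate point is the bookkeeping of the $\sqrt\alpha$ weights and the circularity introduced by the $\norm{\bu-\bu_h}{}$ term arising from the state and adjoint errors. The scalar product $(\Pi_\disc\bp_\disc-\bp,\bu_h-\bu)$ produces, via the two-level decomposition above, a term proportional to $\norm{\bu-\bu_h}{}^2$ (coming from $\bp_\disc-\widehat p_\disc$ through $\by_\disc-\widehat y_\disc$). The main obstacle is to ensure this self-referential term carries a constant strictly smaller than the coercive $\alpha\norm{\bu-\bu_h}{}^2$, or can be absorbed after a Young-inequality split; I expect it to appear without the coercivity constant as a genuine gain of one order, so that it is in fact $\norm{\by_\disc-\widehat y_\disc}{}\cdot\norm{\Pi_\disc\bp_\disc-\widehat p_\disc}{}$, which by the argument above is bounded by $\norm{\bu-\bu_h}{}\cdot\norm{\Pi_\disc\by_\disc-\by}{}$ and is therefore \emph{higher order} rather than competing with the leading term. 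Finally I insert $\Prh(\alpha^{-1}\bp)$, $\Prh\bu$ and $\Prh\ud$ to rewrite the projection differences $(\alpha^{-1}\bp-\Prh(\alpha^{-1}\bp))$, $(\bu-\Prh\bu)$ and $(\ud-\Prh\ud)$ that decorate \eqref{est.basic.u}, using that $\Prh$ is the $L^2$-orthogonal projector onto $\Uh$ so that its defect is orthogonal to $\Uadh$. Tracking the $\sqrt\alpha$ factors through each application of Young's inequality, dividing by $\sqrt\alpha$, and collecting all the higher-order state/adjoint contributions into the $\WS_\disc(\by)$ and $\frac{1}{\sqrt\alpha}\WS_\disc(\bp)$ terms then yields \eqref{est.basic.u}.
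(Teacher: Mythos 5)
Your overall skeleton coincides with the paper's proof: you test \eqref{opt_cont} with $v=\bu_h$ and \eqref{opt_discrete} with $v_h=\Prh\bu$ (legitimate by \eqref{stab.proj}), your intermediate discrete states $\widehat y_\disc$ and $\widehat p_\disc$ are exactly the paper's auxiliary solutions $y_\disc(\bu)$ and $p_\disc(\bu)$ of \eqref{discrete_aux}, and you correctly combine Theorem \ref{th:error.est.PDE} for the consistency parts with the stability of gradient schemes (Proposition \ref{prop.stab}) for the differences $\bp_\disc-p_\disc(\bu)$ and $\by_\disc-y_\disc(\bu)$; the orthogonality manipulations for the projection terms are also those of the paper.

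There is, however, a genuine gap precisely at the point you flag as ``the main obstacle'', and your proposed resolution does not work. The dangerous term is $(\Pi_\disc\bp_\disc-\Pi_\disc p_\disc(\bu),\bu-\bu_h)$. Your bound for it, $\norm{\bu-\bu_h}{}\cdot\norm{\Pi_\disc\by_\disc-\by}{}$, is \emph{not} higher order: by your own chain of estimates, $\norm{\Pi_\disc\by_\disc-\by}{}\lesssim \WS_\disc(\by)+\norm{\bu-\bu_h}{}$, so the bound contains $C\norm{\bu-\bu_h}{}^2$ with $C$ depending only on $\O$, $A$ and an upper bound of $C_\disc$ --- in particular not proportional to $\alpha$. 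It cannot be absorbed into the coercive term $\alpha\norm{\bu-\bu_h}{}^2$ unless $\alpha$ is large, whereas \eqref{est.basic.u} must hold for every $\alpha>0$ with the stated $\alpha$-weights; your argument stalls at $\alpha\norm{\bu-\bu_h}{}^2\le C\norm{\bu-\bu_h}{}^2+\dots$, which is vacuous for small $\alpha$. The missing idea is the duality/sign argument of the paper: the difference equations $a_\disc(w_\disc,\bp_\disc-p_\disc(\bu))=(\Pi_\disc\by_\disc-\by,\Pi_\disc w_\disc)$ and $a_\disc(\by_\disc-y_\disc(\bu),w_\disc)=(\bu_h-\bu,\Pi_\disc w_\disc)$, tested respectively with $w_\disc=\by_\disc-y_\disc(\bu)$ and $w_\disc=\bp_\disc-p_\disc(\bu)$, together with the symmetry of $a_\disc$, give the exact identity
\[
(\Pi_\disc\bp_\disc-\Pi_\disc p_\disc(\bu),\bu-\bu_h)
=(\by-\Pi_\disc y_\disc(\bu),\Pi_\disc\by_\disc-\Pi_\disc y_\disc(\bu))
-\norm{\Pi_\disc\by_\disc-\Pi_\disc y_\disc(\bu)}{}^2.
\]
The negative square is moved to the left-hand side as a second coercive quantity, and it is what absorbs, via Young's inequality with $\alpha$-independent constants, both the remainder $(\by-\Pi_\disc y_\disc(\bu),\cdot)\lesssim \WS_\disc(\by)\,\norm{\Pi_\disc\by_\disc-\Pi_\disc y_\disc(\bu)}{}$ and the analogous contribution from the term involving $\bu-\Prh\bu$. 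Note also that this cancellation uses the symmetry of $A$ assumed in Section \ref{sec:model}; that hypothesis is not decorative here.
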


\begin{proposition}[State and adjoint error estimates]\label{theorem.state.adj.DB}
Let $\disc$ be a GD, 
$(\by,\bp,\bu)$ be the solution to \eqref{continuous} and
$(\by_\disc,\bp_\disc,\bu_h)$ be the solution to \eqref{discrete_kkt}.
Then the following error estimates hold:
\begin{align}
\label{est.basic.y}
\norm{\Pi_\disc \by_\disc-\by}{}+\norm{\nabla_\disc \by_\disc-\nabla\by}{}   \lesssim{}& \norm{\bu-\bu_h}{}+ \WS_\disc(\by),\\
\label{est.basic.p}
\norm{\Pi_\disc \bp_\disc-\bp}{}+\norm{\nabla_\disc \bp_\disc-\nabla\bp}{} \lesssim{}& \norm{\bu-\bu_h}{} + \WS_\disc(\by) + \WS_\disc(\bp).
\end{align}		
\end{proposition}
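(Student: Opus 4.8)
The plan is to reduce both estimates to the PDE error bound of Theorem \ref{th:error.est.PDE} by comparing the discrete state and adjoint with auxiliary gradient-scheme solutions carrying the \emph{exact} continuous data. The obstruction to a direct application of Theorem \ref{th:error.est.PDE} is that the discrete equations do not share the right-hand sides of the continuous ones: \eqref{discrete_state} is driven by $\bu_h$ instead of $\bu$, and \eqref{discrete_adjoint} by $\Pi_\disc\by_\disc$ instead of $\by$. So $\by_\disc$ and $\bp_\disc$ are not the gradient-scheme solutions of \eqref{base.GS} associated with the PDEs solved by $\by$ and $\bp$, and the two discrepancies must be handled separately from the consistency/limit-conformity error.

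For \eqref{est.basic.y}, I would introduce $\hat{y}_\disc\in X_{\disc,0}$ solving the gradient scheme \eqref{base.GS} with $F=f+\bu$, that is $a_\disc(\hat{y}_\disc,w_\disc)=(f+\bu,\Pi_\disc w_\disc)$ for all $w_\disc$. Since $\by$ solves \eqref{base} with this same $F$ by \eqref{state_cont}, Theorem \ref{th:error.est.PDE} yields $\norm{\Pi_\disc\hat{y}_\disc-\by}{}+\norm{\nabla_\disc\hat{y}_\disc-\nabla\by}{}\lesssim\WS_\disc(\by)$. The remaining difference $e_\disc=\by_\disc-\hat{y}_\disc$ then satisfies $a_\disc(e_\disc,w_\disc)=(\bu_h-\bu,\Pi_\disc w_\disc)$; testing with $w_\disc=e_\disc$, invoking the coercivity of $a_\disc$ coming from the uniform ellipticity of $A$, and bounding $\norm{\Pi_\disc e_\disc}{}\le C_\disc\norm{\nabla_\disc e_\disc}{}$ through \eqref{def.CD}, I get $\norm{\nabla_\disc e_\disc}{}+\norm{\Pi_\disc e_\disc}{}\lesssim\norm{\bu-\bu_h}{}$. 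A triangle inequality combining this with the bound on $\hat{y}_\disc-\by$ produces \eqref{est.basic.y}.

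For \eqref{est.basic.p} I proceed in the same way, using the symmetry of $A$ to read \eqref{adj_cont} as an instance of \eqref{base} with $F=\by-\yd$. Let $\hat{p}_\disc$ solve $a_\disc(w_\disc,\hat{p}_\disc)=(\by-\yd,\Pi_\disc w_\disc)$ for all $w_\disc$, so that Theorem \ref{th:error.est.PDE} gives $\norm{\Pi_\disc\hat{p}_\disc-\bp}{}+\norm{\nabla_\disc\hat{p}_\disc-\nabla\bp}{}\lesssim\WS_\disc(\bp)$. The difference $\delta_\disc=\bp_\disc-\hat{p}_\disc$ solves $a_\disc(w_\disc,\delta_\disc)=(\Pi_\disc\by_\disc-\by,\Pi_\disc w_\disc)$, and the identical coercivity argument with $w_\disc=\delta_\disc$ yields $\norm{\nabla_\disc\delta_\disc}{}+\norm{\Pi_\disc\delta_\disc}{}\lesssim\norm{\Pi_\disc\by_\disc-\by}{}$.

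The argument then closes by chaining: the data discrepancy in the adjoint equation is measured by $\norm{\Pi_\disc\by_\disc-\by}{}$, which \eqref{est.basic.y} already bounds by $\norm{\bu-\bu_h}{}+\WS_\disc(\by)$. Substituting this into the estimate for $\delta_\disc$ and adding the bound on $\hat{p}_\disc-\bp$ gives \eqref{est.basic.p}. The only genuinely delicate point is precisely this ordering: the adjoint estimate is not self-contained, since the coupling $\Pi_\disc\by_\disc$ in \eqref{discrete_adjoint} forces one to control the state error first; establishing \eqref{est.basic.y} before \eqref{est.basic.p} is what makes the two-step comparison go through, and the rest is routine use of coercivity together with \eqref{def.CD}.
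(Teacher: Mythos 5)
Your proposal is correct and follows essentially the same route as the paper: your auxiliary solutions $\hat{y}_\disc$ and $\hat{p}_\disc$ are exactly the paper's $y_\disc(\bu)$ and $p_\disc(\bu)$ from \eqref{discrete_aux}, your inline coercivity-plus-$C_\disc$ argument is precisely Proposition \ref{prop.stab}, and your chaining of \eqref{est.basic.y} into the adjoint bound is the natural expansion of the paper's ``can be obtained similarly.'' The ordering you flag as delicate (state estimate before adjoint) is indeed how the paper's argument is structured.
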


\begin{remark}[Rates of convergence for the control problem]\label{rates.control}
Owing to Remark  \ref{rates.PDE}, under sufficient smoothness assumption on $\ud$, if $A$ is Lipschitz continuous and $(\by,\bp,\bu)\in H^2(\O)^2 \times H^1(\O)$
then \eqref{est.basic.u}, \eqref{est.basic.y} and \eqref{est.basic.p} give linear rates of convergence for all classical first-order methods.
\end{remark}

\subsection{Super-convergence for post-processed controls} \label{sec:superconvergence}

We consi\-der here the case $n\le 3$, and the standard situation where admissible controls are those bounded above
and below by appropriate constants $a$ and $b$, that is
\begin{equation}\label{Uad:standard}
\Uad=\{u\in L^2(\O) \,:\,a\le u\le b \mbox{ a.e.}\}.
\end{equation}
Consider a mesh $\mesh$ of $\O$, that is
a finite partition of $\O$ into polygonal/polyhedral cells such that each cell $K\in\mesh$ is star-shaped
with respect to its centroid $\overline{\x}_K$.
Denote the size of this mesh by $h=\max_{K\in\mesh}{\rm diam}(K)$.
The discrete space $\Uh$ is then defined as the
space of piecewise constant functions on this partition:
\begin{equation}\label{Uh:standard}
\Uh=\{v:\O\to \R\,:\,\forall K\in\mesh\,,\;v_{|K}\mbox{ is a constant}\}.
\end{equation}
These choices \eqref{Uad:standard} and \eqref{Uh:standard} of $\Uad$ and $\Uh$ satisfy
\eqref{stab.proj}.
Owing to Remark \ref{rates.control}, for low-order methods such as
conforming and non-conforming FE or MFD schemes, under standard regularity
assumptions the estimate \eqref{est.basic.u}
provides an $\mathcal O(h)$ convergence rate on $\norm{\bu-\bu_h}{}$. Given that $\bu_h$ is piecewise constant, this is optimal. However, using post-processed
controls and following the ideas of \cite{CMAR}, we show that we can obtain
a super-convergence result for the control.

\medskip

The projection operators $\Proj:L^1(\O)\to \Uh$ (orthogonal projection on piecewise constant functions on $\mesh$) and $P_{[a,b]}:\R\to [a,b]$ are defined as
\[
\forall v\in L^1(\O)\,,\;\forall K\in\mesh\,,\quad
(\Proj v)_{|K} :=\dashint_K v(\x)d\x
\]
and
\[
\forall s\in\R\,,\quad P_{[a,b]}(s) := \min(b, \max(a, s)).
\]
 
We make the following assumptions which are discussed, along with the post-processing,
in Section \ref{sec:disc.pp}.

\begin{itemize}
\item[\assum{1}][\emph{Approximation and interpolation errors}]
For each $w\in H^2(\O)$, there exists $w_\mesh\in L^2(\O)$ such that:\\
i) If $w \in H^2(\O) \cap H^1_0(\O)$ solves $-\div(A\nabla w)=g\in H^1(\O)$, and $w_\disc$ is the solution to the corresponding GS, then
\be\label{state:scv}
\norm{\Pi_\disc w_\disc-w_\mesh}{}  \lesssim  h^2\norm{g}{H^1(\O)}.
\ee
ii) For any $w\in H^2(\O)$, it holds
\be\label{prop.M.1}
\begin{aligned}
\forall v\in X_{\disc,0}\,,\;
\big|(w-w_\mesh,\Pi_\disc v_\disc)\big|
\lesssim   h^2\norm{w}{H^2(\O)}\norm{\Pi_\disc v_\disc}{}
\end{aligned}
\ee
and
\be\label{prop.M.2}
\norm{\Proj(w-w_\mesh)}{}\lesssim   h^2\norm{w}{H^2(\O)}.
\ee
\item[\assum{2}] The estimate $ \norm{\Pi_\disc v_\disc-\Proj(\Pi_\disc v_\disc)}{}  \lesssim  h\norm{\nabla_\disc v_\disc}{}$ holds for any $v_\disc\in X_{\disc,0}$.
\item[\assum{3}] [\emph{Discrete Sobolev imbedding}] For all $v\in X_{\disc,0}$, it holds
\[
\norm{\Pi_\disc v_\disc}{L^{2^*}(\O)} \lesssim   \norm{\nabla_\disc v_\disc}{},
\]
where $2^*$ is a Sobolev exponent of 2, that is, $2^*\in [2,\infty)$ if $n=2$, and
$2^*=\frac{2n}{n-2}$ if $n\ge 3$.
\end{itemize}

Let  
\[
\mesh_2=\{K\in\mesh\,:\,\mbox{$\bu=a$ a.e. on $K$, or $\bu=b$ a.e. on $K$,
 or $a<\bu<b$ a.e. on $K$}\}
\]
be the set of fully active or fully inactive cells, and
$\mesh_1=\mesh\setminus\mesh_2$ be the set of cells where $\bu$ takes on the value $a$ (resp.\ $b$) as well as values greater than $a$ (resp.\ lower than $b$).
For $i=1,2$, we let $\O_{i,\mesh}={\rm int}(\cup_{K\in\mesh_i}\overline{K})$.
The space $W^{1,\infty}(\mesh_1)$ is the usual broken Sobolev space,
endowed with its broken norm. Our last assumption is:
\begin{itemize}
\item[\assum{4}] $|\O_{1,\mesh}|\lesssim h$ and $\bu_{|\O_{1,\mesh}}\in W^{1,\infty}(\mesh_1)$, where $|\cdot|$ denotes the Lebesgue measure in $\R^n$.
\end{itemize}

\medskip

From \eqref{opt_cont} and \eqref{opt_discrete}, following the reasoning
in \cite[Theorem 2.28]{tf}, the following pointwise relations can be obtained: for a.e.\ $x \in \O$,
\be
\label{Projection1}
 \begin{aligned}
 \bu(\x)&=P_{[a,b]}\left(\ud(\x)-\frac{1}{\alpha}\bp(\x)\right)\,, \\
 \bu_{h}(\x)&=P_{[a,b]}\left(\Proj\left(\ud(\x) -\frac{1}{\alpha}\Pi_\disc \bp_\disc(\x)\right) \right). 
 \end{aligned}
\ee
Assuming $\bp \in H^2(\O)$ (see Theorem \ref{thm.superconvergence}) and letting $\bp_\mesh$ be defined as in \assum{1}, the post-processed continuous and discrete controls are then defined by
 \begin{equation} 
	\begin{aligned}
 \tu(\x)={}&P_{[a,b]}\left(\Proj \ud(\x)-\frac{1}{\alpha}\bp_\mesh(\x) \right),\\
	\tu_{h}(\x)={}&P_{[a,b]}\left(\Proj\ud(\x)-\frac{1}{\alpha} \Pi_\disc \bp_\disc(\x)\right).
\end{aligned}
\label{Projection2}
 \end{equation}

For $K\in\mesh$, let $\rho_K=\max\{r>0\,:\,B(\overline{\x}_K,r)\subset K\}$
be the maximal radius of balls centred at $\overline{\x}_K$ and included in $K$.
Assume that the mesh is regular, in the sense that there exists $\eta>0$ such that
\be\label{reg:mesh}
\forall K\in\mesh\,,\; \eta\ge \frac{{\rm diam}(K)}{\rho_K}.
\ee
We use the following extension of the notation \eqref{notation:lesssim}:
\[
\begin{aligned}
&X\lesssim_{\eta} Y\mbox{ means that }X\le CY\mbox{ for some $C$ depending}\\
&\mbox{only on $\O$, $A$, an upper bound of $C_\disc$, and $\eta$}.
\end{aligned}
\]

\noindent We now state our two main super-convergence results.

\begin{theorem}[Super-convergence for post-processed controls I] \label{thm.superconvergence}
Let $\disc$ be a GD and $\mesh$ be a mesh.
Assume that 
\begin{itemize}
\item $\Uad$ and $\Uh$ are given by \eqref{Uad:standard} and \eqref{Uh:standard},
\item \assum{1}--\assum{4} hold,
\item $\ud$, $\by$ and $\bp$ belong to $H^2(\O)$,
\item $\yd$ and $f$ belong to $H^1(\O)$,
\end{itemize}
and let $\tu$, $\tu_h$ be the post-processed controls defined by \eqref{Projection2}.
Then there exists $C$ depending only on $\alpha$ such that
\be\label{eq:supercv}
	\norm{\tu-\tu_{h}}{} \lesssim_{\eta}Ch^{2-\frac{1}{2^*}}\norm{\bu}{W^{1,\infty}(\mesh_1)}
+Ch^2\mathcal F(a,b,\yd,\ud,f,\by,\bp),
\ee
where
\begin{align*}
\mathcal F(a,b,\yd,\ud,f,\by,\bp)={}&\minmod(a,b)+\norm{\yd}{H^1(\O)}+  \norm{\ud}{H^2(\O)}+\norm{f}{H^1(\O)}\\
&+\norm{\by}{H^2(\O)}+\norm{\bp}{H^2(\O)}
\end{align*}
with $\minmod(a,b)=0$ if $ab\le 0$ and $\minmod(a,b)=\min(|a|,|b|)$ otherwise.
\end{theorem}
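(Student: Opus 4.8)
The plan is to reduce the estimate on the post-processed controls to a super-convergence estimate on the co-state, and then to extract the latter from the optimality conditions. Since $P_{[a,b]}$ is $1$-Lipschitz and the two post-processings in \eqref{Projection2} share the term $\Proj\ud$, I first obtain the pointwise bound $|\tu-\tu_h|\le \alpha^{-1}|\bp_\mesh-\Pi_\disc\bp_\disc|$, hence
\[
\norm{\tu-\tu_h}{}\le \alpha^{-1}\norm{\bp_\mesh-\Pi_\disc\bp_\disc}{}.
\]
Everything therefore reduces to showing that $\Pi_\disc\bp_\disc$ super-converges to $\bp_\mesh$.

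To transfer the PDE super-convergence I would introduce the auxiliary discrete solutions $\hat y_\disc$ and $\hat p_\disc$, defined as the gradient-scheme solutions of the state and adjoint equations driven by the \emph{exact} data $f+\bu$ and $\by-\yd$. Writing $\bp_\mesh-\Pi_\disc\bp_\disc=(\bp_\mesh-\Pi_\disc\hat p_\disc)+\Pi_\disc(\hat p_\disc-\bp_\disc)$, the first term is of order $h^2$ by \assum{1}, while $\hat p_\disc-\bp_\disc$ solves a gradient scheme whose source is the state error $\by-\Pi_\disc\by_\disc$. A duality argument, combined with the weak estimate \eqref{prop.M.1} and \assum{2}, then bounds $\norm{\Pi_\disc(\hat p_\disc-\bp_\disc)}{}$ by $h^2$-consistency terms plus the control error measured through the projection, $\norm{\bu_h-\Proj\bu}{}$. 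Applying the same device to $\hat y_\disc-\by_\disc$ (source $\bu-\bu_h$) keeps the first-order, oscillating part $\bu-\Proj\bu$ super-convergent once paired against a reconstructed function, again via \eqref{prop.M.1}--\eqref{prop.M.2} and \assum{2}.

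The heart of the argument is the super-convergence of the control in the projected sense. Following \cite{CMAR}, I would test the continuous inequality \eqref{opt_cont} with $v=\bu_h\in\Uad$ and the discrete inequality \eqref{opt_discrete} with $v_h=\Proj\bu\in\Uadh$ (admissible by \eqref{stab.proj}), and add them, to reach
\[
\alpha\norm{\bu-\bu_h}{}^2\le (\bp-\Pi_\disc\bp_\disc,\bu_h-\Proj\bu)+\big(\text{projection terms in }\bu-\Proj\bu\big).
\]
I would then split $\mesh=\mesh_1\cup\mesh_2$: on the active cells of $\mesh_2$ the projection error of $\bu$ vanishes, and on the inactive cells the pointwise identity $\bu=\ud-\alpha^{-1}\bp$ lets $\Proj$ commute with $P_{[a,b]}$, producing an exact cancellation of the dangerous $\alpha\norm{\bu-\Proj\bu}{}^2$ contributions on $\O_{2,\mesh}$. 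For the leading cross term the key algebraic step is to re-express $(\bp-\Pi_\disc\bp_\disc,\bu_h-\bu)$ through the gradient schemes satisfied by $\hat y_\disc-\by_\disc$ and $\hat p_\disc-\bp_\disc$: testing one equation against the other's error turns it into $-\norm{\Pi_\disc(\hat y_\disc-\by_\disc)}{}^2$ plus residuals controlled at order $h^2$ by \assum{1}. The negative square absorbs the feedback generated by the control error, so that no smallness of $\alpha$ is required and the constant depends only on $\alpha$.

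I expect the main obstacle to be the transition region $\O_{1,\mesh}$, where $P_{[a,b]}$ no longer commutes with $\Proj$ and the projection error of $\bu$ is only first order. Here I would invoke \assum{4}: from $\bu\in W^{1,\infty}(\mesh_1)$ and $|\O_{1,\mesh}|\lesssim h$ one gets $\norm{\bu-\Proj\bu}{L^{(2^*)'}(\O_{1,\mesh})}\lesssim h^{2-1/2^*}\norm{\bu}{W^{1,\infty}(\mesh_1)}$, while the discrete Sobolev embedding \assum{3} controls the co-state factor uniformly in $L^{2^*}(\O)$, so that Hölder's inequality on the small set $\O_{1,\mesh}$ delivers exactly the $h^{2-1/2^*}\norm{\bu}{W^{1,\infty}(\mesh_1)}$ contribution. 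The delicate point is the bookkeeping of these $\mesh_1$ terms so that they enter the final inequality in a way that is linear in $\norm{\tu-\tu_h}{}$ (or absorbed by the negative square), rather than as a bare additive term that would only yield a square-root rate: obtaining the exponent $2-\tfrac{1}{2^*}$ instead of the classical $h^{3/2}$ of \cite{CMAR}-type arguments is precisely what \assum{3} buys, and threading it correctly through the coupling between control, state and co-state errors is where the real care is needed.
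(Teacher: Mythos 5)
Your overall architecture does track the paper's proof: the reduction via the Lipschitz continuity of $P_{[a,b]}$ to the co-state error $\alpha^{-1}\norm{\bp_\mesh-\Pi_\disc\bp_\disc}{}$, the auxiliary discrete solutions driven by the exact data $f+\bu$ and $\by-\yd$, the duality argument exploiting \eqref{prop.M.1}--\eqref{prop.M.2} and \assum{2} to make the oscillating part $\bu-\Proj\bu$ harmless, the H\"older/small-set/Sobolev mechanism on $\O_{1,\mesh}$ (which is exactly how the paper's estimate of $A_{21}$ produces the exponent $2-\frac{1}{2^*}$), and the absorption of a negative square obtained by testing the state and adjoint error equations against one another. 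However, your central step is not established, and as written it would fail. Testing \eqref{opt_cont} with $v=\bu_h$ and \eqref{opt_discrete} with $v_h=\Prh\bu$ reproduces the \emph{basic} estimate of Theorem \ref{theorem.control.DB}, namely $\alpha\norm{\bu-\bu_h}{}^2\le(\bp-\Pi_\disc\bp_\disc,\bu_h-\bu)+\cdots$, and no bookkeeping can extract superconvergence from it: since $\bu_h\in\Uh$ and $\Proj\bu$ is the best $\Uh$-approximation, one always has $\norm{\bu-\bu_h}{}\ge\norm{\bu-\Proj\bu}{}\sim h$ for non-piecewise-constant $\bu$, so the left-hand side is saturated at first order. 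The ``exact cancellation of the dangerous $\alpha\norm{\bu-\Proj\bu}{}^2$ contributions on $\O_{2,\mesh}$'' you invoke does not occur: the right-hand side contains no matching negative square of that quantity, and $\Proj$ does not commute with the nonlinear map $P_{[a,b]}$; on fully inactive cells one merely has $\bu=\ud-\alpha^{-1}\bp$ pointwise, which gives $\Proj\bu=\Proj\ud-\alpha^{-1}\Proj\bp$ but does not remove the $O(h)$ oscillation $\bu-\Proj\bu$ from the squared norm. Since your duality chain ultimately needs $\norm{\Proj\bu-\bu_h}{}$ (not $\norm{\bu-\bu_h}{}$) to be of order $h^{2-1/2^*}$ after the \assum{2} step, this is a genuine gap --- one which your final paragraph honestly flags but does not close.

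The paper closes it with a different comparator: the centroid-sampled control $\hat{u}$ defined by $\hat{u}(\x)=\bu(\overline{\x}_K)$ on each $K$, together with $\hat{p}(\x)=\bp(\overline{\x}_K)$ and $\udh(\x)=\ud(\overline{\x}_K)$. Because the continuous optimality condition holds in the pointwise form $[\bp(\x)+\alpha(\bu(\x)-\ud(\x))][v(\x)-\bu(\x)]\ge 0$ and all the functions involved are continuous at centroids, it can be evaluated at $\x=\overline{\x}_K$ with $v=\bu_h$, yielding the per-cell inequality $(\hat{p}+\alpha(\hat{u}-\udh))(\bu_h-\hat{u})\ge 0$ with all factors constant on $K$; integrating and pairing with \eqref{opt_discrete} tested at the admissible $v_h=\hat{u}$ gives $\alpha\norm{\hat{u}-\bu_h}{}^2\le M_1+M_2+M_3+M_4$, in which $M_3\le 0$ is precisely the negative square you anticipated, and $M_1$, $M_2$, $M_4$ are all bounded by $(h^2\mbox{-terms}+A_1+A_2)\norm{\hat{u}-\bu_h}{}$ --- linear in the unknown, hence a full-rate bound after division, with no square-root loss. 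The residual discrepancy $\Proj\bu-\hat{u}$ is then handled inside the duality estimate of $A_2$: on $\O_{2,\mesh}$ it is $O(h^2)$ by \eqref{approx.xK}, using that $\bu$ equals $a$, $b$ or $\ud-\alpha^{-1}\bp$ there and hence lies in $H^2$ cell-wise, while on $\O_{1,\mesh}$ the pairing against $\Pi_\disc w_\disc$ in $L^{2^*}$ delivers $h^{2-1/2^*}$. Without the centroid comparator and the pointwise KKT evaluation, your scheme stalls at the basic $O(h)$ estimate; with them, the rest of your outline (including your $A_{21}$-type treatment of the transition region) goes through essentially as in the paper.
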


\begin{theorem}[Super-convergence for post-processed controls II] \label{thm.fullsuperconvergence}
Let the assumptions and notations of Theorem \ref{thm.superconvergence} hold, except \assum{3} which is replaced
by: 
\be\label{Linfty.est}
\begin{aligned}
&\mbox{there exists $\delta>0$ such that, for any $F\in L^2(\O)$,}\\
&\mbox{the solution $\psi_D$ to \eqref{base.GS} satisfies $\norm{\Pi_D \psi_D}{L^\infty(\O)}
\le \delta \norm{F}{}$.}
\end{aligned}
\ee
Then there exists $C$ depending only on $\alpha$ and $\delta$ such that
\be \label{eq:fullsupercv}
\norm{\tu-\tu_h}{}\lesssim_{\eta} C h^2\left[\norm{\bu}{W^{1,\infty}(\mesh_1)}+\mathcal F(a,b,\yd,\ud,f,\by,\bp)\right].
\ee
\end{theorem}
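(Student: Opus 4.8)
The plan is to rerun the proof of Theorem~\ref{thm.superconvergence} essentially verbatim and to sharpen only the single estimate in which the discrete Sobolev imbedding \assum{3} was used; this is precisely the estimate that produces the transition-cell contribution $h^{2-\frac{1}{2^*}}\norm{\bu}{W^{1,\infty}(\mesh_1)}$ in \eqref{eq:supercv}. Every other term there is already of the form $h^2\mathcal F(\cdots)$ and is obtained without invoking \assum{3} (only \assum{1}, \assum{2}, and the basic estimates of Theorem~\ref{theorem.control.DB} and Proposition~\ref{theorem.state.adj.DB}), so it carries over unchanged. The entire content of the theorem is therefore to upgrade the exponent $2-\frac{1}{2^*}$ to $2$ on the $\mesh_1$ term, using \eqref{Linfty.est} in place of \assum{3}.

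First I would recall the reduction underlying both theorems. Since $P_{[a,b]}$ is $1$-Lipschitz and the two post-processed controls in \eqref{Projection2} share the first argument $\Proj\ud$, one has the pointwise bound $|\tu-\tu_h|\le\alpha^{-1}|\bp_\mesh-\Pi_\disc\bp_\disc|$, hence
\[
\norm{\tu-\tu_h}{}\le\frac1\alpha\norm{\bp_\mesh-\Pi_\disc\bp_\disc}{}.
\]
Introducing the auxiliary gradient-scheme solutions $\hat y_\disc$ and $\hat p_\disc$ of the \emph{continuous} state and adjoint equations (with right-hand sides $f+\bu$ and $\by-\yd$), I would split $\bp_\mesh-\Pi_\disc\bp_\disc=(\bp_\mesh-\Pi_\disc\hat p_\disc)+\Pi_\disc(\hat p_\disc-\bp_\disc)$. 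The first piece is $\lesssim h^2\norm{\by-\yd}{H^1(\O)}$ by \assum{1} applied to $\bp$, and is clean. For the second, $\hat p_\disc-\bp_\disc$ solves the gradient scheme with source $\by-\Pi_\disc\by_\disc$; decomposing $\Pi_\disc\by_\disc-\by$ through $\hat y_\disc$ and $\by_\mesh$ and using \assum{1} isolates, besides genuinely $h^2$ quantities (the $\mathcal F$ contribution), a term governed by the control error $\bu-\bu_h$. Through the optimality relations \eqref{Projection1}, this term contributes only $h^2$ on the fully active/inactive cells $\mesh_2$ (where the clipping is either exact or smooth), leaving a single contribution supported on the transition cells $\mesh_1$ and governed by the oscillation of $\bu$ there. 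This last contribution is the one to be treated by duality.

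The key step is the duality estimate for that contribution. Writing the relevant $L^2$ norm as a supremum over unit-norm test functions $\phi$ and letting $\zeta_\disc$ be the gradient-scheme solution of \eqref{base.GS} with source $\phi$, the transition-cell term takes (up to the factor $\alpha^{-1}$ and harmless constants) the form $\int_{\O_{1,\mesh}}(\bu-\Proj\bu)\,\Pi_\disc\zeta_\disc\,\dx$. On each cell of $\mesh_1$ the membership $\bu\in W^{1,\infty}(\mesh_1)$ gives the local oscillation bound $|\bu-\Proj\bu|\lesssim h\norm{\bu}{W^{1,\infty}(\mesh_1)}$. In Theorem~\ref{thm.superconvergence} one estimates $\int_{\O_{1,\mesh}}|\Pi_\disc\zeta_\disc|\,\dx$ by H\"older's inequality and \assum{3}, obtaining $|\O_{1,\mesh}|^{1-1/2^*}\norm{\Pi_\disc\zeta_\disc}{L^{2^*}(\O)}\lesssim h^{1-1/2^*}\norm{\nabla_\disc\zeta_\disc}{}$ and hence the factor $h^{2-1/2^*}$. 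Here, instead, I would invoke \eqref{Linfty.est}: since $\zeta_\disc$ solves \eqref{base.GS} with unit-norm source, $\norm{\Pi_\disc\zeta_\disc}{L^\infty(\O)}\le\delta$, and therefore
\[
\int_{\O_{1,\mesh}}|\Pi_\disc\zeta_\disc|\,\dx\le|\O_{1,\mesh}|\,\norm{\Pi_\disc\zeta_\disc}{L^\infty(\O)}\lesssim_\eta\delta\,h,
\]
using $|\O_{1,\mesh}|\lesssim h$ from \assum{4}. Multiplying by the local oscillation bound yields the full quadratic factor $h^2$ on $\norm{\bu}{W^{1,\infty}(\mesh_1)}$, which is exactly the improvement claimed in \eqref{eq:fullsupercv}.

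I expect the main obstacle to be organisational rather than computational: one must arrange the proof of Theorem~\ref{thm.superconvergence} so that the transition-cell term is isolated as an integral tested against a quantity that is honestly the reconstruction $\Pi_\disc\zeta_\disc$ of a gradient-scheme solution with controlled $L^2$ source, because \eqref{Linfty.est} is available only for solutions of \eqref{base.GS} (in Theorem~\ref{thm.superconvergence} the same term is instead tested in a way that naturally calls on $\norm{\nabla_\disc\zeta_\disc}{}$ through \assum{3}). Verifying that this is the \emph{unique} place where \assum{3} entered, and that replacing it by the $L^\infty$ bound leaves all the $\mathcal F$ terms untouched, is the crux; the remaining bookkeeping is identical to that of Theorem~\ref{thm.superconvergence}.
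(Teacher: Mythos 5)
Your proposal is correct and matches the paper's proof: the paper likewise observes that \assum{3} enters the proof of Theorem \ref{thm.superconvergence} only through the transition-cell term $A_{21}$, and replaces the H\"older/discrete-Sobolev argument there by the combination of $|\O_{1,\mesh}|\lesssim h$ from \assum{4}, the $W^{1,\infty}$ oscillation bound on $\bu$, and the $L^\infty$ bound \eqref{Linfty.est} applied to $p_\disc^*(\bu)-p_\disc^*(\hat{u})$, which solves \eqref{base.GS} with source $\Pi_\disc(y_\disc(\bu)-y_\disc(\hat{u}))$ by \eqref{subtract_aux2}. Your duality framing with an arbitrary unit-norm test function $\phi$ is only a cosmetic variant of the paper's direct use of this dual function followed by absorption of the common factor $\norm{\Pi_\disc(y_\disc(\bu)-y_\disc(\hat{u}))}{}$, and your anticipated ``obstacle'' resolves exactly as you suspected, since the tested quantity is already the reconstruction of a gradient-scheme solution with $L^2$-controlled source.
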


\begin{remark}
The estimates \eqref{eq:supercv} and \eqref{eq:fullsupercv}
also hold if we replace the two terms $\Proj\ud$ in \eqref{Projection2}
with $\ud$.
\end{remark}

The super-convergence of the state and adjoint variables follow easily.

\begin{corollary}[Super-convergence for the state and adjoint variables] \label{cor.superconvergence}
 Let $(\by,\bp)$ \\ and $(\by_\disc,\bp_\disc)$ be the solutions to \eqref{state_cont}--\eqref{adj_cont} and  \eqref{discrete_state}--\eqref{discrete_adjoint}. Under the assumptions of Theorem \ref{thm.superconvergence}, the following error estimates hold, with $C$ depending only on $\alpha$:
\begin{align}
\label{eq_supercv.y}
	\norm{\by_\mesh-\Pi_\disc \by_\disc}{}\lesssim_{\eta}{}&
		Ch^{r}\norm{\bu}{W^{1,\infty}(\mesh_1)}+Ch^2 \mathcal F(a,b,\yd,\ud,f,\by,\bp),\\
\label{eq_supercv.p}
		\norm{\bp_\mesh-\Pi_\disc \bp_\disc}{}\lesssim_{\eta}{}&
				C h^{r}\norm{\bu}{W^{1,\infty}(\mesh_1)}
				+Ch^2\mathcal F(a,b,\yd,\ud,f,\by,\bp),
\end{align}
where $\by_\mesh$ and $\bp_\mesh$ are defined as in \assum{1}, and
$r=2-\frac{1}{2^*}$.

Under the assumptions of Theorem \ref{thm.fullsuperconvergence}, \eqref{eq_supercv.y} and \eqref{eq_supercv.p}
hold with $r=2$ and $C$ depending only $\alpha$ and $\delta$.
\end{corollary}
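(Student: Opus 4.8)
The plan is to prove \eqref{eq_supercv.y} and \eqref{eq_supercv.p} one after the other, in each case comparing the computed solution with an auxiliary gradient-scheme solution driven by the \emph{exact} right-hand side, and then to reduce the whole statement to the super-convergence of the control that is controlled in the proof of Theorem \ref{thm.superconvergence}.

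For the state, I would introduce the solution $\widehat{y}_\disc\in X_{\disc,0}$ of \eqref{base.GS} with source $F=f+\bu$ and split $\by_\mesh-\Pi_\disc\by_\disc=(\by_\mesh-\Pi_\disc\widehat{y}_\disc)+\Pi_\disc(\widehat{y}_\disc-\by_\disc)$. Since $\bu=P_{[a,b]}(\ud-\alpha^{-1}\bp)$ with $\ud,\bp\in H^2(\O)$ and $P_{[a,b]}$ is $1$-Lipschitz, we have $f+\bu\in H^1(\O)$ with $\norm{f+\bu}{H^1(\O)}$ bounded by $\mathcal F$ up to a factor depending on $\alpha$, so \eqref{state:scv} (applied to $\by$, which solves $-\div(A\nabla\by)=f+\bu$) gives $\norm{\by_\mesh-\Pi_\disc\widehat{y}_\disc}{}\lesssim h^2\mathcal F$. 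The remaining difference $z_\disc:=\widehat{y}_\disc-\by_\disc$ solves $a_\disc(z_\disc,w_\disc)=(\bu-\bu_h,\Pi_\disc w_\disc)$, so testing with $w_\disc=z_\disc$ and using coercivity yields $\norm{\nabla_\disc z_\disc}{}^2\lesssim(\bu-\bu_h,\Pi_\disc z_\disc)$. A direct Cauchy--Schwarz bound only produces an $\mathcal O(h)$ factor; the decisive step is to split
\[
(\bu-\bu_h,\Pi_\disc z_\disc)=(\bu-\bu_h,\Pi_\disc z_\disc-\Proj\Pi_\disc z_\disc)+(\bu-\bu_h,\Proj\Pi_\disc z_\disc).
\]
Because $\bu_h$ and $\Proj\bu$ are piecewise constant while $\Pi_\disc z_\disc-\Proj\Pi_\disc z_\disc$ is $L^2$-orthogonal to piecewise constants, the first pairing equals $(\bu-\Proj\bu,\Pi_\disc z_\disc-\Proj\Pi_\disc z_\disc)\le\norm{\bu-\Proj\bu}{}\,\norm{\Pi_\disc z_\disc-\Proj\Pi_\disc z_\disc}{}\lesssim_\eta h^2\mathcal F\,\norm{\nabla_\disc z_\disc}{}$ by the piecewise-constant approximation bound and \assum{2}; by self-adjointness of $\Proj$ the second pairing equals $(\Proj\bu-\bu_h,\Pi_\disc z_\disc)\lesssim\norm{\Proj\bu-\bu_h}{}\,\norm{\nabla_\disc z_\disc}{}$ after invoking $C_\disc$. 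Dividing by $\norm{\nabla_\disc z_\disc}{}$ and bounding $\norm{\Pi_\disc z_\disc}{}$ by $C_\disc\norm{\nabla_\disc z_\disc}{}$, this gives $\norm{\by_\mesh-\Pi_\disc\by_\disc}{}\lesssim_\eta h^2\mathcal F+\norm{\Proj\bu-\bu_h}{}$.

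For the adjoint, using that $A$ is symmetric so that the adjoint scheme is again of the form \eqref{base.GS}, I would let $\widehat{p}_\disc$ solve it with source $\by-\yd\in H^1(\O)$ and split analogously. The term $\norm{\bp_\mesh-\Pi_\disc\widehat{p}_\disc}{}$ is $\lesssim h^2\norm{\by-\yd}{H^1(\O)}\lesssim h^2\mathcal F$ by \eqref{state:scv} (now applied to $\bp$). The difference $\widehat{p}_\disc-\bp_\disc$ solves $a_\disc(w_\disc,\widehat{p}_\disc-\bp_\disc)=(\by-\Pi_\disc\by_\disc,\Pi_\disc w_\disc)$; testing against itself and splitting $\by-\Pi_\disc\by_\disc=(\by-\by_\mesh)+(\by_\mesh-\Pi_\disc\by_\disc)$, the first part is controlled by $h^2\norm{\by}{H^2(\O)}$ through \eqref{prop.M.1} and the second by the state estimate just obtained. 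This yields $\norm{\bp_\mesh-\Pi_\disc\bp_\disc}{}\lesssim_\eta h^2\mathcal F+\norm{\by_\mesh-\Pi_\disc\by_\disc}{}$, so the adjoint bound follows from the state bound.

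It then remains to insert the control super-convergence $\norm{\Proj\bu-\bu_h}{}\lesssim_\eta h^{2-\frac{1}{2^*}}\norm{\bu}{W^{1,\infty}(\mesh_1)}+h^2\mathcal F$ produced in the proof of Theorem \ref{thm.superconvergence}; this is exactly the estimate from which \eqref{eq:supercv} is deduced, through the same two reductions together with $\norm{\tu-\tu_h}{}\le\alpha^{-1}\norm{\bp_\mesh-\Pi_\disc\bp_\disc}{}$, and it turns the two reductions above into \eqref{eq_supercv.y}--\eqref{eq_supercv.p} with $r=2-\frac{1}{2^*}$. Under the assumptions of Theorem \ref{thm.fullsuperconvergence}, \assum{3} is replaced by the uniform bound \eqref{Linfty.est}, whose sole effect is to remove the Sobolev loss $1/2^*$ in the control estimate on the cells of $\mesh_1$ (there one pairs through $L^\infty$--$L^1$ and uses $|\O_{1,\mesh}|\lesssim h$ rather than the discrete Sobolev embedding), upgrading the rate to $r=2$ at the cost of letting $C$ depend on $\delta$. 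The main obstacle is the state step: since $\bu-\bu_h$ is only $\mathcal O(h)$ in $L^2$, super-convergence cannot be read off directly and must be recovered via the orthogonality split against $\Proj$ together with \assum{2}. Once the control super-convergence is granted the coupling is harmless, because the adjoint error is bounded by the state error and not conversely, so no absorption or fixed-point argument is needed.
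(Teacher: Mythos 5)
Your overall architecture (auxiliary discrete solutions driven by the exact sources, \assum{1}-i) for the consistency parts, adjoint error reduced to the state error) is sound and close in spirit to the paper's, and your first pairing $(\bu-\Proj\bu,\Pi_\disc z_\disc-\Proj\Pi_\disc z_\disc)$ is handled correctly. The genuine gap is in how you close the state estimate: you collapse everything onto $\norm{\Proj\bu-\bu_h}{}$ by a \emph{global} Cauchy--Schwarz in the pairing $(\Proj\bu-\bu_h,\Pi_\disc z_\disc)$, and then assert that $\norm{\Proj\bu-\bu_h}{}\lesssim_{\eta} h^{2-\frac{1}{2^*}}\norm{\bu}{W^{1,\infty}(\mesh_1)}+h^2\mathcal F$ is ``produced in the proof of Theorem \ref{thm.superconvergence}''. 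It is not: what that proof produces is the bound \eqref{a3new1.ineq} on $\norm{\hat{u}-\bu_h}{}$, where $\hat{u}$ is the \emph{centroid-sampled} control, $\hat{u}_{|K}=\bu(\overline{\x}_K)$, not the cell average $\Proj\bu$. The distinction matters precisely on the transition cells $\mesh_1$: there $\bu$ is a clamped function whose kink generically crosses the cell, so averaging and clamping do not commute and $|\Proj\bu-\hat{u}|$ (hence $|\Proj\bu-\bu_h|$) is of size $O(h)$ pointwise on a set of measure $\lesssim h$ by \assum{4}. Consequently $\norm{\Proj\bu-\bu_h}{}$ is genuinely only $O(h^{3/2})$, strictly weaker than $h^{2-\frac{1}{2^*}}$ for $n=2,3$; your reduction as written proves only $r=3/2$, and the $r=2$ upgrade collapses entirely, because once the global Cauchy--Schwarz has been applied there is no remaining place where \assum{3} or \eqref{Linfty.est} can act.

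The missing idea --- and the reason the paper routes the corollary through the three-term split $\norm{\by_\mesh-\Pi_\disc y_\disc(\bu)}{}+\norm{\Pi_\disc(y_\disc(\bu)-y_\disc(\hat{u}))}{}+\norm{\Pi_\disc(y_\disc(\hat{u})-\by_\disc)}{}$, bounded by \assum{1}-i), \eqref{yd.ineq} and \eqref{a3.ineq}+\eqref{a3new1.ineq} respectively --- is that the $\mesh_1$-contribution must stay paired against the discrete test function until one can apply H\"older on the small set $\O_{1,\mesh}$, trading the factor $|\O_{1,\mesh}|^{\frac12-\frac{1}{2^*}}\lesssim h^{\frac12-\frac{1}{2^*}}$ against the discrete Sobolev embedding \assum{3} (this is exactly the estimate \eqref{a21.ineq} of $A_{21}$), or against the $L^\infty$ bound \eqref{Linfty.est} in the full-rate case (estimate \eqref{improved.A2}). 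Your skeleton is repairable along these lines: in the second pairing insert $\hat{u}$, bound $(\hat{u}-\bu_h,\Pi_\disc z_\disc)$ by Cauchy--Schwarz and \eqref{a3new1.ineq}, and split $(\Proj\bu-\hat{u},\Pi_\disc z_\disc)$ over $\O_{1,\mesh}$ and $\O_{2,\mesh}$, repeating the computations \eqref{a21.ineq} and \eqref{a22.ineq} on the equation satisfied by $z_\disc$; after this repair your argument coincides, up to reorganisation, with the paper's proof.
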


Even for very classical schemes, the $L^\infty$ estimate \eqref{Linfty.est} is only known under
restrictive assumptions on the mesh. For example, for conforming and non-conforming $\mathbb{P}_1$
finite elements, it requires the quasi-uniformity of the mesh \cite{LGRN_NCFEM},
which prevents considering local refinements widely used in practical applications.
The scope of Theorem \ref{thm.fullsuperconvergence} is therefore limited in that sense,
but it nonetheless extends to various methods (see e.g. Corollary \ref{cor:nc.hmm})
the super-convergence established in \cite{CMAR} for conforming $\mathbb{P}_1$ finite elements.

On the contrary, Theorem  \ref{thm.superconvergence} holds under much less restrictive assumptions
(see below for a discussion of \assum{1}--\assum{4}), and applies seamlessly to locally
refined meshes, for essentially all numerical methods currently covered by the GDM.
It is also useful to notice that Theorem \ref{thm.superconvergence} \emph{nearly}
provides an $h^2$ convergence rate.
If $n=2$, the Sobolev exponent $2^*$ can be any finite number. In that case, 
\eqref{eq:supercv}, \eqref{eq_supercv.y} and \eqref{eq_supercv.p} are $\mathcal O(h^{2-\epsilon})$
estimates, for any $\epsilon>0$. If $n=3$, the estimates are of order $\mathcal O(h^{11/6})$.
In each case, as noticed in Section \ref{sec:schemes}, these rates are numerically very
close to a full $\mathcal O(h^2)$ convergence rate.

\subsubsection{Discussion on \assum{1}--\assum{4} and post-processings}\label{sec:disc.pp}

To discuss \assum{1}, \assum{2} and the post-processing
choices \eqref{Projection2}, let us consider two situations depending on the
nature of $\Pi_\disc$. This nature drives the choices of $w_\mesh$,
to ensure that the super-convergence result \eqref{state:scv} holds.

\medskip

\paragraph{\sc $\Pi_\disc$ is a piecewise linear reconstruction}

We consider here the case where $\Pi_\disc v_\disc$ is piecewise linear on $\mesh$ for all $v_\disc\in X_{\disc,0}$. 
Then a super-convergence result \eqref{state:scv} usually holds with $w_\mesh=w$ (and even
$\norm{g}{}$ instead of $\norm{g}{H^1(\O)}$).
This is for example well-known for conforming and non-conforming $\mathbb{P}_1$
FE.
In that case, \eqref{prop.M.1} and \eqref{prop.M.2} are trivially satisfied.

Assumption \assum{2} then follows from a simple Taylor expansion
if $\nabla_\disc v_\disc$ is the classical broken gradient (i.e. the gradient
of $\Pi_\disc v_\disc$ in each cell). This is again the case for conforming and non-conforming
$\mathbb{P}_1$ FE.

The post-processing \eqref{Projection2} of $\bu$ then solely consists in projecting
$\ud$ on piecewise constant functions. In particular, if $\ud$ is already piecewise
constant on the mesh, then $\tu=\bu$.

\medskip

\paragraph{\sc $\Pi_\disc$ is a piecewise constant reconstruction}

We consider that $\Pi_\disc v_\disc$ is piecewise constant on $\mesh$ for all $v_\disc\in X_{\disc,0}$. 
Then the super-convergence \eqref{state:scv} requires to project the exact
solution on piecewise constant functions on the mesh. This is usually done
by setting $w_\mesh(\x)=w(\overline{\x}_K)$ for all $\x\in K$ and all $K\in\mesh$.
This super-convergence result is well-known for hMFD and nodal MFD schemes (see \cite{jd_nn,mimeticfdm}).

In that case, Property \eqref{prop.M.2} follows (with $\lesssim$ replaced with
$\lesssim_\eta$) from the classical approximation result \eqref{approx.xK}.
Using the orthogonality property of $\Proj$, \eqref{prop.M.1}
is then proved by writing
\begin{align*}
|(w-w_\mesh,\Pi_\disc v_\disc)|=|(\Proj (w-w_\mesh),\Pi_\disc v_\disc)|
&\le \norm{\Proj (w-w_\mesh)}{}\norm{\Pi_\disc v_\disc}{}\\
&\lesssim_{\eta} h^2 \norm{w}{H^2(\O)}\norm{\Pi_\disc v_\disc}{}.
\end{align*}

For a piecewise constant reconstruction, \assum{2} is trivial since 
$\Pi_\disc v_\disc=\Proj(\Pi_\disc v_\disc)$.

\medskip

\paragraph{\sc Assumptions \assum{3} and \assum{4}}

Using the discrete functional analysis tools of \cite[Chapter 8]{koala}
the discrete Sobolev embedding \assum{3} is rather straightforward for all methods that
fit in the GDM. This includes conforming and non-conforming
$\mathbb{P}_1$ schemes as well as MFD schemes.

Assumption \assum{4} is identical to the assumption (A3) in \cite{CMAR}.
Let $R$ be the region where the bounds $a$ and $b$ pass from active to inactive, i.e.
where $\ud-\alpha^{-1}\bp$ crosses these bounds.
If $R$ is of co-dimension 1, which is a rather natural situation, then
the condition $|\O_{1,\mesh}|\lesssim h$ holds.

The $W^{1,\infty}$ regularity on $\bu$ mentioned in \assum{4} can be established
in a number of situations.
It holds, for example, if $\O$ is a bounded open subset of class $C^{1,1}$, the coefficients of $A$ belong to $C^{0,1}(\bar{\O})$, $\ud \in W^{1,\infty}(\O)$ and $\yd \in L^q(\O)$ for some $q>n$. Indeed,
under these assumptions, \cite[Theorem 2.4.2.5]{grisvard} ensures that
the state and adjoint equations admit unique solutions in
$H^1_0(\O)\cap W^{2,q}(\O)\subset W^{1,\infty}(\O)$. The projection formula 
\eqref{Projection1} then shows that $\bu$ inherits this Lipschitz continuity property over $\O$.
This also holds if $\O$ has corners but adequate symmetries (that preserve the $W^{2,q}(\O)$
regularity).

Assumption \assum{4} actually does not require the full $W^{1,\infty}$ regularity of $\bu$,
only this regularity on a neighbourhood of $R$. Considering a generic open set $\O$ with Lipschitz (but not necessarily smooth)
boundary, \cite[Theorem 7.3]{stam65} ensures that $\bp$ is continuous. If $\ud$ is continuous
and $a<(\ud)_{|\partial\O}<b$, then $\ud-\alpha^{-1}\bp$ does not cross the levels
$a$ and $b$ close to $\partial\O$, which means that $R$
is a compact set inside $\O$. The Lipschitz regularity of $\bu$ then follows, under the same
assumptions on $A$, $\ud$ and $\yd$ as above, from local regularity results (internal to $\O$),
without assuming that the boundary of $\O$ is $C^{1,1}$.

In all these cases, we also notice that, although the mesh $\mesh$ depends on $h$,
the norm $\norm{\bu}{W^{1,\infty}(\mesh_1)}$ remains bounded independently on $h\to 0$.
Indeed, this norm is bounded by a Lipschitz constant of $\bu$ on a neighbourhood of $R$.

\subsubsection{Application to non-conforming $\mathbb{P}_1$ and hMFD}

Our generic results on the GDM apply to all methods covered by this framework.
In particular, as mentioned in Section \ref{sec:schemes}, to non-conforming $\mathbb{P}_1$ finite
elements and hMFD methods. We state here a corollary of the super-convergence results on the control (Theorems \ref{thm.superconvergence} and \ref{thm.fullsuperconvergence}) for these two methods. We could as 
easily state obvious consequence for these two schemes of Theorem \ref{theorem.control.DB}, Proposition
\ref{theorem.state.adj.DB} and Corollary \ref{cor.superconvergence}.

\begin{corollary}[Super-convergence of the control for nc$\mathbb{P}_1$ and hMFD schemes]\label{cor:nc.hmm}
~

Assume that $\O$ is convex and $A$ is Lipschiz-continu\-ous.
Let $\mesh$ be a mesh in the sense of \cite[Definition 2.21]{DEH15}, with centers at the
centers of mass of the cells.
Assume $\Uad$ and $\Uh$ are given by \eqref{Uad:standard} and \eqref{Uh:standard},
\assum{4} holds, $\ud\in H^2(\O)$ and that $(\yd,f)\in H^1(\O)$.

We consider either one of the following schemes, as described in Section \ref{sec:schemes},
with associated post-processed controls (here, $(\by_h,\bp_h,\bu_h)$ is the solution to
the scheme for the control problem):
\begin{itemize}
\item \textnormal{nc}$\mathbb{P}_1/\mathbb{P}_0$ scheme: $\eta$ satisfies \eqref{reg:mesh}, 
$\tu=P_{[a,b]}(\Proj \ud-\alpha^{-1}\bp)$, and $\tu_h=P_{[a,b]}(\Proj\ud-\alpha^{-1}\bp_h)$.
\item $\textnormal{hMFD}$ schemes: $\eta$ is an upper bound of $\theta_\mesh$ defined by \cite[Eq. (2.27)]{DEH15}
and, for all $K\in\mesh$, 
\[
\tu_{|K}=P_{[a,b]}\left(\dashint_K\ud -\alpha^{-1} \bp(\overline{\x}_K)\right)
\;\mbox{ and }\;
(\tu_h)_{|K}=P_{[a,b]}\left(\dashint_K \ud -\alpha^{-1}(\bp_h)_K\right).
\]
\end{itemize}
Then there exists $C$ depending only on $\O$, $A$, $\alpha$, $a$, $b$, $\bu$, $\ud$, $\yd$, $f$
and $\eta$ such that
\be\label{eq:supercv.nc.hmm}
	\norm{\tu-\tu_{h}}{} \le Ch^{2-\frac{1}{2^*}}.
\ee
Moreover, if $\chi\ge \max_{K\in\mesh}\frac{h^n}{|K|}$, then there exists 
$C$ depending only on $\O$, $A$, $\alpha$, $a$, $b$, $\bu$, $\ud$, $\yd$, $f$, $\eta$ and $\chi$
such that
\be\label{eq:supercv.nc.hmm.full}
	\norm{\tu-\tu_{h}}{} \le Ch^2.
\ee
\end{corollary}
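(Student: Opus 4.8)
The plan is to obtain Corollary \ref{cor:nc.hmm} as a direct application of Theorems \ref{thm.superconvergence} and \ref{thm.fullsuperconvergence}: for each of the two schemes I would identify the corresponding gradient discretisation $\disc$, check that hypotheses \assum{1}--\assum{4} and the regularity requirements of those theorems are satisfied, and then absorb all the data-dependent norms appearing on the right-hand sides of \eqref{eq:supercv} and \eqref{eq:fullsupercv} into a single constant $C$.

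First I would establish the regularity needed to invoke the theorems. Since $\O$ is convex and $A$ is Lipschitz-continuous, classical elliptic regularity gives $H^2$ regularity for the state and adjoint problems: because $f+\bu\in L^2(\O)$ and $\by-\yd\in L^2(\O)$, the solutions $\by$ and $\bp$ of \eqref{state_cont}--\eqref{adj_cont} belong to $H^1_0(\O)\cap H^2(\O)$, with $\norm{\by}{H^2(\O)}$ and $\norm{\bp}{H^2(\O)}$ controlled by the data. Together with the standing hypotheses $\ud\in H^2(\O)$ and $(\yd,f)\in H^1(\O)$, this supplies all the regularity demanded by Theorem \ref{thm.superconvergence}.

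Next I would verify \assum{1}--\assum{3} for each scheme, relying on the discussion of Section \ref{sec:disc.pp}. For nc$\mathbb{P}_1$ the reconstruction $\Pi_\disc$ is piecewise linear, so one takes $w_\mesh=w$; the super-convergence \eqref{state:scv} is the standard $L^2$ super-convergence for non-conforming $\mathbb{P}_1$ elements, \eqref{prop.M.1}--\eqref{prop.M.2} then hold trivially, and \assum{2} follows from a Taylor expansion of the broken gradient. For hMFD the reconstruction is piecewise constant, so one sets $w_\mesh(\x)=w(\overline{\x}_K)$ on each cell (this is where the choice of centers at the centroids is used); here \eqref{state:scv} is the known $L^2$ super-convergence of hMFD, exactly the kind of improved estimate provided by \cite{jd_nn}, \eqref{prop.M.2} follows from the cell-centroid approximation estimate, \eqref{prop.M.1} from the orthogonality of $\Proj$, and \assum{2} is immediate since $\Pi_\disc v_\disc=\Proj(\Pi_\disc v_\disc)$. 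The discrete Sobolev embedding \assum{3} holds for both methods by the discrete functional-analysis tools of \cite[Chapter 8]{koala}, and \assum{4} is assumed in the corollary. Applying Theorem \ref{thm.superconvergence} then yields \eqref{eq:supercv}; to reach \eqref{eq:supercv.nc.hmm} I would absorb $\minmod(a,b)$, $\norm{\yd}{H^1(\O)}$, $\norm{\ud}{H^2(\O)}$, $\norm{f}{H^1(\O)}$, $\norm{\by}{H^2(\O)}$, $\norm{\bp}{H^2(\O)}$ and $\norm{\bu}{W^{1,\infty}(\mesh_1)}$ into $C$, using the elliptic-regularity bounds above and the fact, noted in Section \ref{sec:disc.pp}, that $\norm{\bu}{W^{1,\infty}(\mesh_1)}$ stays bounded as $h\to0$. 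For the full-rate estimate \eqref{eq:supercv.nc.hmm.full} I would instead invoke Theorem \ref{thm.fullsuperconvergence}, which requires the discrete $L^\infty$ bound \eqref{Linfty.est} in place of \assum{3}; this is where the quasi-uniformity enters through $\chi\ge\max_K h^n/|K|$, with \eqref{Linfty.est} holding for nc$\mathbb{P}_1$ on quasi-uniform meshes by \cite{LGRN_NCFEM} and for hMFD by the appendix (Section \ref{sec:LinftyHMM}).

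The main obstacle is not the bookkeeping of constants but the verification of the two deepest ingredients: the $L^2$ super-convergence \eqref{state:scv} in \assum{1}i), and, for the full $\mathcal O(h^2)$ rate, the uniform discrete $L^\infty$ estimate \eqref{Linfty.est}. For nc$\mathbb{P}_1$ both are available in the literature, but for hMFD the $L^\infty$ bound is genuinely new and is the reason for the dedicated appendix; it is the delicate point that restricts \eqref{eq:supercv.nc.hmm.full} to quasi-uniform meshes, whereas \eqref{eq:supercv.nc.hmm} persists on the locally refined meshes allowed by the regularity parameter $\eta$ in \eqref{reg:mesh}.
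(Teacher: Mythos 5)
Your proposal is correct and follows essentially the same route as the paper's own proof: identifying the two GDs, taking $w_\mesh=w$ for nc$\mathbb{P}_1$ and $(w_\mesh)_{|K}=w(\overline{\x}_K)$ for hMFD, using elliptic regularity from the convexity of $\O$ and Lipschitz continuity of $A$ to verify \assum{1}--\assum{3}, applying Theorem \ref{thm.superconvergence} for \eqref{eq:supercv.nc.hmm}, and obtaining \eqref{eq:supercv.nc.hmm.full} from Theorem \ref{thm.fullsuperconvergence} via the quasi-uniform $L^\infty$ bound \eqref{Linfty.est}, known for nc$\mathbb{P}_1$ from \cite{LGRN_NCFEM} and proved for HMM in Theorem \ref{th:Linfty}. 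The only deviations are citation-level (the paper checks \assum{3} for nc$\mathbb{P}_1$ through \cite[Proposition 5.4]{DD15} and for hMFD through \cite[Lemma 8.15 and Lemma 13.11]{koala}) and do not affect correctness.
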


\begin{remark}\label{conformingP1}
The conforming $\mathbb{P}_1$ FE method is a GDM for the gradient discretisation
defined by $\disc=(V_h,{\rm Id},\nabla)$, where $V_h$ is the conforming $\mathbb{P}_1$ space on
the considered mesh. Then, $W_\disc\equiv 0$ and $S_\disc$ is bounded above by the interpolation
error of the $\mathbb{P}_1$ method. For this gradient discretisation method, Theorems \ref{theorem.control.DB} and \ref{thm.fullsuperconvergence}
provide, respectively, $\mathcal O(h)$ error estimates on the control and $\mathcal O(h^2)$
error estimates on the post-processed controls (under a quasi-uniformity assumption on the sequence
of meshes). These rates are the same already proved in \cite{CMAR}. For \textnormal{nc}$\mathbb{P}_1$ FE method, the estimate \eqref{eq:supercv.nc.hmm.full} provides quadratic rate of convergence in a similar way as for conforming $\mathbb{P}_1$ method.
\end{remark}

\begin{proof}[Proof of Corollary \ref{cor:nc.hmm}]
\cite[Sections 3.2.1 and 3.6.1]{DEH15} presents a description of the GDs corresponding
to the nc$\mathbb{P}_1$ and hMFD schemes (the latter is seen as a GS
through its identification as a hybrid mimetic mixed method, see \cite{dro-10-uni,dro-12-gra};
the corresponding GD is recalled in Section \ref{sec:LinftyHMM}, Appendix).

Using these gradient discretisations, \eqref{eq:supercv.nc.hmm}
follows from Theorem \ref{thm.superconvergence} if we can prove that
\assum{1}--\assum{3} hold, for a proper choice of operator $w\mapsto w_\mesh$.

Note that our assumptions on $\O$ and $A$ ensure that the state
(and thus adjoint) equations satisfy the elliptic regularity: if the source terms
are in $L^2(\O)$ then the solutions belong to $H^2(\O)$.

For the nc$\mathbb{P}_1$ scheme, recall that $w_\mesh=w$ and the superconvergence result \eqref{state:scv} is known under the elliptic regularity. Also, $\Pi_\disc w_\disc$
	is simply the solution $w_h$ to the scheme. Properties \eqref{prop.M.1} and
\eqref{prop.M.2} are obvious since $w-w_\mesh=0$. This proves \assum{1}.
Assumption \assum{2} follows easily from a Taylor expansion
since $\nabla_\disc v_\disc$ is the broken gradient of $\Pi_\disc v_\disc$.
Assumption \assum{3} follows from \cite[Proposition 5.4]{DD15}, by noticing that
for piecewise polynomial functions that match at the face centroids, the
discrete $\norm{\cdot}{1,2,h}$ norm in \cite{DD15} boils down to the $L^2(\O)^n$ norm
of the broken gradient.

We now consider the hMFD scheme, for which we let $(w_\mesh)_{|K}=w(\overline{\x}_K)$ for
all $K\in\mesh$. The super-convergence result of \assum{1}-i) is proved in,
e.g., \cite{bre-05-con,jd_nn}. 
As mentioned in Section \ref{sec:disc.pp}, Properties \eqref{prop.M.1} and
\eqref{prop.M.2} follow from \eqref{approx.xK};
\assum{2} is trivially true, and \assum{3} follows from the discrete functional
analysis results of \cite[Lemma 8.15 and Lemma 13.11]{koala}. 

\medskip

The full super-convergence result \eqref{eq:supercv.nc.hmm.full} follows from
Theorem \ref{thm.fullsuperconvergence} if we can establish the $L^\infty$ bound
\eqref{Linfty.est} under the assumption that $\chi$ is bounded -- i.e. the
mesh is quasi-uniform. This $L^\infty$ bound is known for the nc$\mathbb{P}_1$
finite element method \cite{LGRN_NCFEM}, and is proved in Theorem \ref{th:Linfty} for the HMM method.
\end{proof}

\section{Proof of the main results}\label{sec:proofs}

\subsection{Proof of the basic error estimates}

Let us start with a straightforward stability result, which will be useful for the analysis.

\begin{proposition}[Stability of gradient schemes]\label{prop.stab}
Let $\underline{a}$ be a coercivity constant of $A$.
If $\psi_\disc$ is the solution to the gradient scheme \eqref{base.GS}, then
\be\label{stab.grad}
\norm{\nabla_\disc \psi_\disc}{}
\le \frac{C_\disc}{\underline{a}}\norm{F}{} \quad\mbox{ and }\quad\norm{\Pi_\disc \psi_\disc}{}\le 
\frac{C_\disc^2}{\underline{a}}\norm{F}{}.
\ee
\end{proposition}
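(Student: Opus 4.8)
The plan is to prove the stability estimate \eqref{stab.grad} directly from the gradient scheme \eqref{base.GS} by testing against the solution itself and then exploiting the coercivity of $A$ together with the definition of $C_\disc$ in \eqref{def.CD}.

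\medskip

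First I would set $w_\disc=\psi_\disc$ in the gradient scheme \eqref{base.GS}, which gives
\[
\int_\O A\nabla_\disc \psi_\disc\cdot\nabla_\disc\psi_\disc\d\x=(F,\Pi_\disc\psi_\disc).
\]
Using the coercivity of $A$ (with constant $\underline a$) on the left-hand side and the Cauchy--Schwarz inequality on the right-hand side yields
\[
\underline a\norm{\nabla_\disc\psi_\disc}{}^2\le\int_\O A\nabla_\disc\psi_\disc\cdot\nabla_\disc\psi_\disc\d\x
=(F,\Pi_\disc\psi_\disc)\le\norm{F}{}\norm{\Pi_\disc\psi_\disc}{}.
\]
By the definition \eqref{def.CD} of $C_\disc$, we have $\norm{\Pi_\disc\psi_\disc}{}\le C_\disc\norm{\nabla_\disc\psi_\disc}{}$. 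Substituting this bound on the right-hand side gives
\[
\underline a\norm{\nabla_\disc\psi_\disc}{}^2\le C_\disc\norm{F}{}\norm{\nabla_\disc\psi_\disc}{}.
\]
Dividing by $\norm{\nabla_\disc\psi_\disc}{}$ (the case $\nabla_\disc\psi_\disc=0$ being trivial, and here $\norm{\nabla_\disc\cdot}{}$ is a genuine norm by Definition \ref{def:GD}) establishes the first inequality $\norm{\nabla_\disc\psi_\disc}{}\le\frac{C_\disc}{\underline a}\norm{F}{}$.

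\medskip

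The second inequality then follows immediately: applying $\norm{\Pi_\disc\psi_\disc}{}\le C_\disc\norm{\nabla_\disc\psi_\disc}{}$ once more and inserting the just-proved bound on $\norm{\nabla_\disc\psi_\disc}{}$ gives $\norm{\Pi_\disc\psi_\disc}{}\le\frac{C_\disc^2}{\underline a}\norm{F}{}$. The argument is entirely elementary and I do not anticipate any genuine obstacle; the only point requiring minor care is the division step, which is justified because $\norm{\nabla_\disc\cdot}{}$ is a norm on $X_{\disc,0}$ and the estimate holds trivially when $\psi_\disc=0$.
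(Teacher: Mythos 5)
Your proof is correct and follows exactly the paper's own argument: testing \eqref{base.GS} with $w_\disc=\psi_\disc$, combining coercivity with the bound $\norm{\Pi_\disc\psi_\disc}{}\le C_\disc\norm{\nabla_\disc\psi_\disc}{}$ from \eqref{def.CD}, and then reusing that bound for the second estimate. Your remark about the division step (justified since $\norm{\nabla_\disc\cdot}{}$ is a norm on $X_{\disc,0}$) is a minor point the paper leaves implicit; otherwise the two proofs are identical.
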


\begin{proof}
Choose $w_\disc=\psi_\disc$ in \eqref{base.GS} and use the
definition of $C_\disc$ to write
\[
\underline{a}
\norm{\nabla_\disc \psi_\disc}{}^2
\le \norm{F}{}\norm{\Pi_\disc \psi_\disc}{}
\le C_\disc \norm{F}{}\norm{\nabla_\disc \psi_\disc}{}.
\]
The proof of first inequality in \eqref{stab.grad} is complete. The second estimate follows from the
definition of $C_\disc$. \end{proof}


We can now prove Theorem \ref{theorem.control.DB}.
The technique used here is an adaptation of classical ideas used (e.g., for the error-analysis of finite-element based discretisations) to the gradient discretisation method.

In this proof, define the scaled norm $\NORM{\cdot}$ and projection error $E_h$ by
\[
\forall W\in L^2(\O)\,,\;\NORM{W}=\sqrt{\alpha}\norm{W}{}
\mbox{ and } E_h(W)=\NORM{W-\Prh W}.
\]
To establish the error estimates, we need the following auxiliary discrete
problem: seek $(y_{\disc}(\bu), p_{\disc}(\bu) )\in X_{\disc,0} \times X_{\disc,0} $ such that
\begin{subequations} \label{discrete_aux}
	\begin{align}
			a_{\disc}(y_{\disc}(\bu),w_{\disc}) ={}& (f + \bu, \Pi_\disc w_{\disc})&
				\forall \: w_{\disc} \in  X_{\disc,0}, \label{state_aux}\\
		a_{\disc}(w_{\disc},p_{\disc}(\bu)) ={}& (\by-\yd, \Pi_\disc w_{\disc}) &
			  \forall \: w_{\disc} \in  X_{\disc,0}. \label{adj_aux}
	\end{align}
\end{subequations}

\begin{proof}[Proof of Theorem \ref{theorem.control.DB}]
Let $P_{\disc,\alpha}(\bu)=\alpha^{-1}\Pi_\disc p_{\disc}(\bu)$,
$\bP_{\disc,\alpha}=\alpha^{-1}\Pi_\disc \bp_{\disc}$, and  $\bP_{\alpha}=\alpha^{-1} \bp$.

Since $\bu_h \in \Uadh \subset \Uad$, from the optimality condition \eqref{opt_cont}, 
\begin{align} \label{inter1}
&-\alpha(\bP_{\alpha}+\bu-\ud,\bu-\bu_h)\ge 0.
\end{align}
By \eqref{stab.proj}, we have $\Prh\bu \in \Uadh$ and therefore
a use of the discrete optimality condition (see \eqref{opt_discrete}) yields
\begin{align}
 \alpha(\bP_{\disc,\alpha}+\bu_h-\ud,\bu - \bu_h) ={}&\alpha(\bP_{\disc,\alpha} +\bu_h-\ud,\bu- \Prh\bu)\nonumber\\
 & +\alpha (\bP_{\disc,\alpha} +\bu_h-\ud,\Prh\bu -\bu_h)\nonumber\\
\ge{}& \alpha(\bP_{\disc,\alpha}+\bu_h-\ud,\bu- \Prh\bu). \label{inter2}
\end{align}
An addition of \eqref{inter1} and \eqref{inter2} yields
\begin{align}
\NORM{\bu-\bu_h}^2  \le{}& -\alpha(\bP_{\disc,\alpha}+\bu_h-\ud,\bu- \Prh\bu)
+\alpha(\bP_{\disc,\alpha} - \bP_{\alpha},\bu-\bu_h)
\nonumber\\
={}& -\alpha(\bP_{\disc,\alpha} +\bu_h-\ud,\bu-\Prh\bu)
-\alpha(\bP_{\alpha}-P_{\disc,\alpha}(\bu),\bu-\bu_h)\nonumber\\
& +\alpha(\bP_{\disc,\alpha}-P_{\disc,\alpha}(\bu),\bu-\bu_h).
\label{inter3}
\end{align}
The first term in the right-hand side of \eqref{inter3} is recast now. By orthogonality property of $\Prh$ we have 
$(\bu_h-\Prh \ud,\bu-\Prh\bu)=0$ and 
$(\Prh\bP_{\alpha},\bu-\Prh\bu)=0$.
Therefore,
\begin{align}
-\alpha({}&\bP_{\disc,\alpha}+\bu_h-\ud,\bu-\Prh\bu)\nonumber\\
={}&-\alpha(\bP_{\alpha},\bu-\Prh\bu) +\alpha(\bP_{\alpha}-\bP_{\disc,\alpha},\bu-\Prh\bu) -\alpha(\Prh \ud-\ud,\bu-\Prh\bu) \nonumber \\
={}&-\alpha(\bP_{\alpha}-\Prh\bP_{\alpha},\bu-\Prh\bu)+\alpha(\bP_{\alpha}-P_{\disc,\alpha}(\bu),\bu-\Prh\bu)\nonumber \\ 
& +\alpha(P_{\disc,\alpha}(\bu)-\bP_{\disc,\alpha},\bu-\Prh\bu)
+\alpha(\ud-\Prh \ud,\bu-\Prh\bu).
\label{inter4}
\end{align}
Let us turn to the third term in the right-hand side of \eqref{inter3}.
From \eqref{discrete_adjoint} and \eqref{adj_aux}, for all $w_\disc \in X_{\disc,0}$,
\be\label{eq:1}
a_\disc(w_\disc,\bp_\disc- p_\disc(\bu)) = (\Pi_\disc \by_\disc- \by, \Pi_\disc w_\disc). 
\ee
We also have, by \eqref{discrete_state} and \eqref{state_aux},
\begin{align}
a_\disc(\by_\disc- y_\disc(\bu), w_\disc) =& (\bu_h-\bu, \Pi_\disc w_\disc).
\label{eq:2}
\end{align}
A use of symmetry of $a_\disc$,
a choice of $w_\disc = \by_\disc-y_\disc(\bu)$ in \eqref{eq:1} and
$w_\disc=\bp_\disc-p_\disc(\bu)$ in \eqref{eq:2} gives
an expression for the third term on the right hand side of \eqref{inter3} as
\begin{align}
\alpha(\bP_{\disc,\alpha}-P_{\disc,\alpha}(\bu),\bu-\bu_h)
 ={}&- (\Pi_\disc \by_\disc- \by, \Pi_\disc \by_\disc- \Pi_\disc y_\disc(\bu)) \nonumber \\
={}& (\by - \Pi_\disc y_\disc(\bu), \Pi_\disc \by_\disc- \Pi_\disc y_\disc(\bu)) \nonumber \\
& - \norm{\Pi_\disc \by_\disc-  \Pi_\disc y_\disc(\bu)}{}^2.
\label{inter5}
\end{align}
A substitution of \eqref{inter4} and \eqref{inter5} in \eqref{inter3} yields
\begin{align}
&\NORM{\bu - \bu_h}^2+ \norm{\Pi_\disc \by_\disc-  \Pi_\disc y_\disc(\bu)}{}^2
\nonumber \\ 
&\quad\le-\alpha(\bP_{\alpha}-\Prh\bP_{\alpha},\bu-\Prh\bu)
+\alpha(\bP_{\alpha}-P_{\disc,\alpha}(\bu),\bu-\Prh\bu)\nonumber\\
&\qquad+\alpha(P_{\disc,\alpha}(\bu)-\bP_{\disc,\alpha},\bu-\Prh\bu)
 +\alpha(\ud-\Prh \ud,\bu-\Prh\bu) \nonumber\\
&\qquad
-\alpha(\bP_{\alpha}-P_{\disc,\alpha}(\bu),\bu-\bu_h)
 +  (\by - \Pi_\disc y_\disc(\bu) , \Pi_\disc \by_\disc- \Pi_\disc y_\disc(\bu))
 \nonumber \\ 
& \quad=: T_1 + T_2+ T_3 + T_4+T_5+T_6.
\label{big.last.estimate}
\end{align}
We now estimate each term $T_i$, $i=1,\ldots,6$.
By Cauchy-Schwarz inequality we have 
\be\label{est.T1}
T_1\le E_h(\bP_{\alpha})E_h(\bu).
\ee
Equation \eqref{adj_aux} shows that $p_\disc(\bu)$ is the
solution of the GS corresponding to the adjoint
problem \eqref{adj_cont}, whose solution is $\bp$. Therefore, by Theorem \ref{th:error.est.PDE},
\begin{equation}
\NORM{\bP_{\alpha}-P_{\disc,\alpha}(\bu)}
=\frac{1}{\sqrt{\alpha}}\norm{\bp-\Pi_\disc p_\disc(\bu)}{}\lesssim \frac{1}{\sqrt{\alpha}}\WS_\disc(\bp).
\label{est.bPPdisc}
\end{equation}
Hence, using the Cauchy--Schwarz inequality,
\be
T_2\lesssim \frac{1}{\sqrt{\alpha}}E_h(\bu)\WS_\disc(\bp).
\label{est.T2}
\ee
Let us turn to $T_3$. By writing the difference of \eqref{adj_aux}
and \eqref{discrete_adjoint} we see that $p_\disc(\bu)-\bp_\disc$
is the solution to the GS \eqref{base.GS} with
source term $F=\by-\Pi_\disc\by_\disc$. Hence, using Proposition
\ref{prop.stab}, we find that
\begin{align*}
\NORM{P_{\disc,\alpha}(\bu)-\bP_{\disc,\alpha}}
={}&\frac{1}{\sqrt{\alpha}}
\norm{\Pi_\disc p_\disc(\bu)-\Pi_\disc \bp_\disc}{}\\
\lesssim{}&\frac{1}{\sqrt{\alpha}} \norm{\by-\Pi_\disc\by_\disc}{}\\
\lesssim{}&\frac{1}{\sqrt{\alpha}} \norm{\by-\Pi_\disc y_\disc(\bu)}{}
+ \frac{1}{\sqrt{\alpha}}\norm{\Pi_\disc y_\disc(\bu)-\Pi_\disc \by_\disc}{}.
\end{align*}
A use of Theorem \ref{th:error.est.PDE} with $\psi=\by$ to bound the first term in the above expression yields, by Young's inequality, 
\be\label{est.T3}
T_3\le{} \frac{\cter{cst:T3}}{\sqrt{\alpha}}E_h(\bu)\WS_\disc(\by)+\frac{\cter{cst:T3}}{\alpha}E_h(\bu)^2
+\frac{1}{4}\norm{\Pi_\disc y_\disc(\bu)-\Pi_\disc \by_\disc}{}^2,
\ee
where $\ctel{cst:T3}$ only depends on $\O$, $A$ and an upper bound of $C_\disc$.\\
Let us consider $T_4$ now. A use of Cauchy--Schwarz inequality and Young inequality leads to
\be\label{est.T4}
T_4 \le E_h(\bu)E_h(\ud)\le \frac{1}{2} E_h(\bu)^2+\frac{1}{2}E_h(\ud)^2.
\ee
We estimate $T_5$ by using \eqref{est.bPPdisc} and Young's inequality:
\be\label{est.T5}
T_5\le \frac{1}{2}\NORM{\bu- \bu_h}{}^2
+\frac{\cter{cst:gen}}{\alpha}\WS_\disc(\bp)^2,
\ee
where $\ctel{cst:gen}$ only depends on $\O$, $A$ and an upper bound of $C_\disc$.
Finally, to estimate $T_6$ we write, by Theorem \ref{th:error.est.PDE} with $\psi=\by$,
\be\label{est.T6}
\begin{aligned}
T_6\le{}& \cter{cst:gen2}\WS_\disc(\by)\norm{\Pi_\disc\by_\disc-\Pi_\disc y_\disc(\bu)}{}\\
\le{}&\cter{cst:gen2}^2\WS_\disc(\by)^2+
\frac{1}{4}\norm{\Pi_\disc\by_\disc-\Pi_\disc y_\disc(\bu)}{}^2,
\end{aligned}
\ee
with $\ctel{cst:gen2}$ only depending on $\O$, $A$ and an upper bound of $C_\disc$.

We then plug \eqref{est.T1}, \eqref{est.T2}, \eqref{est.T3}, \eqref{est.T4}, \eqref{est.T5} and \eqref{est.T6} into \eqref{big.last.estimate}. A use of Young's inequality and
$\sqrt{\sum_i a_i^2}\le \sum_i a_i$ concludes the proof. \end{proof}

\subsection{Proof of the super-convergence estimates}

The following auxiliary problem will be useful to prove the superconvergence of the control.
For $g\in L^2(\O)$, let $p_\disc^*(g)\in X_{\disc,0}$ solve
\begin{equation} 
a_\disc(w_\disc,p_\disc^*(g))=(\Pi_\disc y_\disc(g)-\yd,\Pi_\disc w_\disc)\quad\forall w_\disc\in X_{\disc,0}, \label{aux2}
\end{equation}
where $y_\disc(g)$ is given by \eqref{state_aux} with $\bu$ replaced by $g$.

Let us recall two approximation properties of $\Proj$. As proved in \cite[Lemma 8.10]{koala},
\begin{equation}\label{approx.PM.1}
\forall\phi\in H^1(\O)\,,\;\norm{\Proj\phi-\phi}{}\lesssim_{\eta} h\norm{\phi}{H^1(\O)}.
\end{equation}
For $K\in\mesh$, let $\overline{\x}_K$ be the centroid (centre of gravity) of $K$.
We have the standard approximation property (see e.g. \cite[Lemma 7.7]{jd_nn} with
$w_K\equiv 1$)
\begin{equation}\label{approx.xK}
\forall K\in\mesh\,,\;\forall \phi\in H^2(K)\,,\;
\norm{\Proj \phi-\phi(\overline{\x}_K)}{L^2(K)}\lesssim_{\eta} {\rm diam}(K)^2 \norm{\phi}{H^2(K)}.
\end{equation}

\begin{proof}[Proof of Theorem \ref{thm.superconvergence}]
Define $\hat{u}$, $\hat{p}$ and $\udh$ a.e.\ on $\O$ by:
for all $K\in\mesh$ and all $\x\in K$,
$\hat{u}(\x) = \bu(\overline{\x}_K)$,  $ \hat{p}(\x)=\bp(\overline{\x}_K)$ and $\udh(\x)=\ud(\overline{\x}_K)$.
From \eqref{Projection2} and the Lipschitz continuity of $P_{[a,b]}$, it follows that
\begin{align}
\norm{\tu-\tu_{h}}{}
\le{}&\alpha^{-1}\norm{ \Pi_\disc \bp_\disc-\bp_\mesh}{}\nonumber\\
\le{}&  \alpha^{-1}\norm{ \bp_\mesh-\Pi_\disc p_\disc^*(\bu)}{} +  \alpha^{-1}\norm{\Pi_\disc p_\disc^*(\bu) -\Pi_\disc p_\disc^*(\hat{u})}{}  \nonumber \\
& + \alpha^{-1}\norm{\Pi_\disc p_\disc^*(\hat{u})-\Pi_\disc \bp_\disc}{}  \nonumber \\
 =:{}&   \alpha^{-1}A_1 + \alpha^{-1}A_2+ \alpha^{-1}A_3.\label{ineq}
\end{align}

\textbf{Step 1}: estimate of $A_1$.

Recalling the equations \eqref{adj_cont}
and \eqref{adj_aux} on $\bp$ and $p_\disc(\bu)$, a use of triangle inequality
and $\assum{1}$-i)  yields
\begin{align}
A_1 & \le  \norm{ \bp_\mesh-\Pi_\disc p_\disc(\bu)}{} + \norm{ \Pi_\disc p_\disc(\bu)-\Pi_\disc p_\disc^*(\bu)}{} \nonumber\\
&\lesssim   h^2\norm{\by -\yd}{H^1(\O)} +\norm{ \Pi_\disc p_\disc(\bu)-\Pi_\disc p_\disc^*(\bu)}{}.
\label{a1.inequality}
\end{align} 

We now estimate the last term in this inequality. 
Subtract \eqref{aux2} with $g=\bu$ from \eqref{adj_aux}, substitute $w_\disc=p_\disc(\bu)-p_\disc^*(\bu)$ , use Cauchy-Schwarz inequality and 
property \eqref{prop.M.1} in $\assum{1}$-ii) to obtain
\begin{align*}
 \norm{\nabla_\disc(p_\disc(\bu)-p_\disc^*(\bu))}{}^2 & \lesssim{}
a_\disc(p_\disc(\bu)-p_\disc^*(\bu),p_\disc(\bu)-p_\disc^*(\bu)) \\
& = (\by- \Pi_\disc y_\disc (\bu), \Pi_\disc(p_\disc(\bu)-p_\disc^*(\bu))  ) \\
& = (\by- \by_\mesh, \Pi_\disc(p_\disc(\bu)-p_\disc^*(\bu))  ) \\
& \qquad + (\by_\mesh- \Pi_\disc y_\disc (\bu), \Pi_\disc(p_\disc(\bu)-p_\disc^*(\bu))  )\\
&\lesssim{}
 h^2\norm{\by}{H^2(\O)} \norm{\Pi_\disc(p_\disc(\bu)-p_\disc^*(\bu))}{}\\ 
& \qquad + \norm{\by_\mesh-\Pi_\disc y_\disc(\bu)}{} \norm{\Pi_\disc(p_\disc(\bu)-p_\disc^*(\bu))}{}.  \end{align*} 

Using the definition of $C_\disc$ and $\assum{1}$-i) leads to
$\norm{\Pi_\disc p_\disc(\bu)-\Pi_\disc p_\disc^*(\bu)}{} \lesssim h^2\norm{\by}{H^2(\O)} + h^2\norm{f+\bu}{H^1(\O)}$. Plugged into \eqref{a1.inequality}, this estimate yields
\begin{align}
A_1 \lesssim  h^2(\norm{\by -\yd}{H^1(\O)}+  \norm{\by}{H^2(\O)}
+\norm{f+\bu}{H^1(\O)} ). \label{a1new.ineq}
\end{align}

\textbf{Step 2}: estimate of $A_2$.

Subtracting the equations  \eqref{aux2} satisfied by $p_\disc^*(\bu)$
and $p_\disc^*(\hat{u})$, for all $v_\disc \in X_{\disc,0}$,
\be \label{subtract_aux2}
a_\disc(v_\disc, p_\disc^*(\bu) -p_\disc^*(\hat{u}))=(\Pi_\disc y_\disc(\bu) -\Pi_\disc y_\disc(\hat{u}),\Pi_\disc v_\disc).
\ee
As a consequence of \eqref{subtract_aux2} and Proposition \ref{prop.stab}, 
\begin{align}
A_2 &=\norm{\Pi_\disc p_\disc^*(\bu) -\Pi_\disc p_\disc^*(\hat{u})}{}
\lesssim \norm{\Pi_\disc y_\disc(\bu) -\Pi_\disc y_\disc(\hat{u})}{}. \label{est.A2}
\end{align}
Choosing $v_\disc=y_\disc(\bu)-y_\disc(\hat{u})$ in \eqref{subtract_aux2}, setting $w_\disc=p_\disc^*(\bu)-p_\disc^*(\hat{u})$, subtracting the equations \eqref{state_aux} satisfied by $y_\disc(\bu)$
and $y_\disc(\hat{u})$, using the orthogonality
property of the projection operator $\Proj$, and invoking \eqref{approx.PM.1}
and $\assum{2}$ gives
\begin{align}
\Vert\Pi_\disc( y_\disc&(\bu)-y_\disc(\hat{u}))\Vert^2 =(\Pi_\disc y_\disc(\bu)-\Pi_\disc y_\disc(\hat{u}),\Pi_\disc y_\disc(\bu)-\Pi_\disc y_\disc(\hat{u})) \nonumber\\
={}&a_\disc(y_\disc(\bu)-y_\disc(\hat{u}),p_\disc^*(\bu)-p_\disc^*(\hat{u}))\nonumber \\
={}& (\bu-\hat{u},\Pi_\disc w_\disc) \nonumber \\
 ={}& (\bu-\Proj\bu,\Pi_\disc w_\disc-\Proj(\Pi_\disc w_\disc))
 + (\Proj\bu-\hat{u},\Pi_\disc w_\disc) \nonumber \\
\lesssim_{\eta}{}& h \norm{\bu}{H^1(\O)}h\norm{\nabla_\disc w_\disc}{}  \nonumber\\
&+ \underbrace{\int_{\O_{1,\mesh}}(\Proj\bu-\hat{u})\Pi_\disc w_\disc\d\x}_{A_{21}}  +
\underbrace{\int_{\O_{2,\mesh}}(\Proj\bu-\hat{u})\Pi_\disc w_\disc\d\x}_{A_{22}}. \label{new.A2}
\end{align}
 Equation \eqref{subtract_aux2} and Proposition \ref{prop.stab} show that
 \be \label{stability_aux2}
\norm{\nabla_\disc w_\disc}{}= \norm{\nabla_\disc(p_\disc^*(\bu)-p_\disc^*(\hat{u}))}{} \lesssim \norm{\Pi_\disc( y_\disc(\bu)-y_\disc(\hat{u}))}{}.
 \ee
Plugging this estimate into \eqref{new.A2} yields
\be
\Vert\Pi_\disc( y_\disc(\bu)-y_\disc(\hat{u}))\Vert^2\lesssim_{\eta}{} h^2 \norm{\bu}{H^1(\O)}\norm{\Pi_\disc( y_\disc(\bu)-y_\disc(\hat{u}))}{} +A_{21}+A_{22}. \label{a21a22.ineq}
\ee
A use of Holder's inequality, $\assum{4}$, $\assum{3}$ and \eqref{stability_aux2} yields
\begin{align}
A_{21} \le{}& \norm{\Proj\bu-\hat{u}}{L^2(\O_{1,\mesh})}\norm{\Pi_\disc w_\disc}{L^2(\O_{1,\mesh})} \nonumber \\
  \le{}& h \norm{\bu}{W^{1,\infty}(\mesh_1)}|\O_{1,\mesh}|^\frac{1}{2} \norm{\Pi_\disc w_\disc}{L^{2^*}(\O)}|\O_{1,\mesh}|^{\frac{1}{2}-\frac{1}{2^*}} \nonumber \\
\lesssim{}&   h^{2-\frac{1}{2^*}}\norm{\bu}{W^{1,\infty}(\mesh_1)}\norm{\nabla_\disc w_\disc}{} \nonumber\\
 \lesssim{}&   h^{2-\frac{1}{2^*}}\norm{\bu}{W^{1,\infty}(\mesh_1)}\norm{\Pi_\disc( y_\disc(\bu)-y_\disc(\hat{u}))}{}. \label{a21.ineq}
\end{align}
Consider now $A_{22}$. 
For any $K\in \mesh_2$, we have $\bu=a$ on $K$, $\bu=b$ on $K$,
or, by \eqref{Projection1}, $\bu=\ud-\alpha^{-1}\bp$.
Hence, $\bu\in H^2(K)$ and, using \eqref{approx.xK}, the definition of $C_\disc$ and \eqref{stability_aux2}, we obtain
 \begin{align}
 A_{22}  {}&\le \norm{\Proj\bu-\hat{u}}{L^2(\O_{2,\mesh})}\norm{\Pi_\disc w_\disc}{} \nonumber\\
{}&\lesssim_{\eta} h^2 \norm{\bu}{H^2(\O_{2,\mesh})}\norm{\Pi_\disc w_\disc}{} \nonumber \\
 {}&\lesssim_{\eta} h^2 \left( \norm{\ud}{H^2(\O_{2,\mesh})} +\alpha^{-1}\norm{\bp}{H^2(\O_{2,\mesh})}\right)\norm{\nabla_\disc w_\disc}{}\nonumber \\
 {}&\lesssim_{\eta} h^2 \left( \norm{\ud}{H^2(\O_{2,\mesh})} +\alpha^{-1}\norm{\bp}{H^2(\O_{2,\mesh})}\right)\norm{\Pi_\disc( y_\disc(\bu)-y_\disc(\hat{u}))}{}. \label{a22.ineq}
 \end{align}
 
Plugging \eqref{a21.ineq} and \eqref{a22.ineq} into \eqref{a21a22.ineq} yields
\begin{align}
\norm{\Pi_\disc( y_\disc(\bu)-y_\disc(\hat{u}))}{}
\lesssim_{\eta}{}&
h^2\norm{\bu}{H^1(\O)}+h^{2-\frac{1}{2^*}}\norm{\bu}{W^{1,\infty}(\mesh_1)} \nonumber\\
&+ h^2 \left( \norm{\ud}{H^2(\O_{2,\mesh})} +\alpha^{-1}\norm{\bp}{H^2(\O_{2,\mesh})}\right). \label{yd.ineq}
\end{align}
Hence, using this in \eqref{est.A2}, we infer
\be
A_{2} \lesssim_{\eta}{} h^{2-\frac{1}{2^*}}\norm{\bu}{W^{1,\infty}(\mesh_1)}
+ h^2(\norm{\bu}{H^1(\O)}+ \alpha^{-1}\norm{\bp}{H^2(\O)}+ \norm{\ud}{H^2(\O)} ). 
\label{a2.ineq}
\ee

\medskip

\textbf{Step 3}: estimate of $A_3$.

Applying twice the stability result of Proposition \ref{prop.stab} (first on
the equation satisfied by $p_\disc^*(\hat{u})-\bp_\disc$, and then
on $y_\disc(\hat{u})-\by_\disc$), we write
\be\label{a3.ineq}
A_3 =\norm{\Pi_\disc p_\disc^*(\hat{u})-\Pi_\disc \bp_\disc}{}  \lesssim \norm{\Pi_\disc y_\disc(\hat{u})-\Pi_\disc \by_\disc}{} 
	 \lesssim \norm{\hat{u}-\bu_h}{}.
\ee
Using the continuous optimality condition \eqref{opt_cont}, as in the proof of \cite[Lemma 3.5]{CMAR} we have, for a.e.\ $\x \in \O$,
\[
\big[\bp(\x)+\alpha(\bu(\x)-\ud(\x))\big]\,\big[v(\x)-\bu(\x)\big] \geq 0 \mbox{   for all   } v \in  \Uad. 
\]
Since $\bu$, $\bp$ and $\bu_h$ are continuous at the centroid $\overline{\x}_K$, we
can choose $\x=\overline{\x}_K$ and $v(\overline{\x}_K)=\bu_h(\overline{\x}_K)(=\bu_h$ on $K$). All the involved
functions being constants over $K$, this gives
\[
 \left(\hat{p}+\alpha(\hat{u}-\udh)\right) \left(\bu_h-\hat{u}\right) \geq 0 \mbox{ on $K$, for all $K\in\mesh$}.
\]
Integrating over $K$ and summing over $K \in \mesh$,
\[
(\hat{p} +\alpha \left( \hat{u}-\udh\right) ,\bu_h-\hat{u}) \geq 0.
\]
Choose $v_h=\hat{u}$ in the discrete optimality condition \eqref{opt_discrete} to obtain
\[(\Pi_\disc\bp_\disc+\alpha(\bu_h-\ud),\hat{u}-\bu_h) \geq 0. \]
Adding the above two inequalities yield
\[(\hat{p}-\Pi_\disc\bp_\disc+\alpha(\hat{u}-\bu_h)+ \alpha(\ud-\udh ),\bu_h-\hat{u}) \geq 0
\]
and thus
\begin{align}
\alpha \norm{\hat{u}-\bu_h}{}^2 \leq{}& (\hat{p}-\Pi_\disc\bp_\disc,\bu_h-\hat{u})+  \alpha(\ud-\udh,\bu_h-\hat{u})  \nonumber \\
={}&(\hat{p}-\bp_\mesh,\bu_h-\hat{u})+(\bp_\mesh-\Pi_\disc p_\disc^*(\hat{u}),\bu_h-\hat{u})\nonumber \\
& +(\Pi_\disc p_\disc^*(\hat{u})-\Pi_\disc\bp_\disc,\bu_h-\hat{u})
 +  \alpha(\ud-\udh,\bu_h-\hat{u})  \nonumber \\
=:{}&M_1+M_2+M_3+M_4.\label{a3new.ineq}
\end{align}
Since $\bu_h-\hat{u}$ is piecewise constant on $\mesh$,
the orthogonality property of $\Proj$, \eqref{approx.xK} and \eqref{prop.M.2}
in $\assum{1}$-ii) lead to
\begin{align}
M_1&= (\hat{p}-\Proj\bp_\mesh,\bu_h-\hat{u}) \nonumber \\
&=  (\hat{p}-\Proj\bp,\bu_h-\hat{u})+ (\Proj(\bp-\bp_\mesh),\bu_h-\hat{u}) \nonumber \\
& \le \norm{\hat{p}-\Proj\bp}{}\norm{\bu_h-\hat{u}}{}+\norm{\Proj(\bp-\bp_\mesh)}{} \norm{\bu_h-\hat{u}}{}\nonumber \\
& \lesssim_{\eta}  h^2\norm{\bp}{H^2(\O)}\norm{\bu_h-\hat{u}}{}. \label{m1.ineq}
\end{align}	 

By Cauchy--Schwarz inequality, triangle inequality and the definitions
of $A_1$ and $A_2$,
\be
M_2\le \norm{\bp_\mesh-\Pi_\disc p_\disc^*(\hat{u})}{}\norm{\bu_h-\hat{u}}{}
\lesssim (A_1+A_2) \norm{\bu_h-\hat{u}}{}.\label{m2.ineq}
\ee
Subtracting the equations \eqref{discrete_state} and \eqref{state_aux} (with
$\hat{u}$ instead of $\bu$) satisfied by $\by_\disc$ and $y_\disc(\hat{u})$, choosing $w_\disc= p_\disc^*(\hat{u})-\bp_\disc$, and using the equations
\eqref{discrete_adjoint} and \eqref{aux2} on $\bp_\disc$ and $p_\disc^*(\hat{u})$,
we find
\begin{align}
M_3	={}&(\Pi_\disc (p_\disc^*(\hat{u})-\bp_\disc),\bu_h-\hat{u})\nonumber\\
={}& a_\disc(\by_\disc-y_\disc(\hat{u}),p_\disc^*(\hat{u})-\bp_\disc) \nonumber\\
={}& (\Pi_\disc(y_\disc(\hat{u})-\by_\disc),\Pi_\disc(\by_\disc-y_\disc(\hat{u}))
\le 0. \label{m3.ineq}
\end{align}
Using the orthogonality property of $\Proj$, \eqref{approx.xK}
yields
\begin{align}
M_4 = \alpha(\ud-\udh,\bu_h-\hat{u})
&= \alpha(\Proj\ud-\udh,\bu_h-\hat{u}) \nonumber \\
      & \lesssim_{\eta}{} \alpha\norm{\Proj\ud-\udh}{}\norm{\bu_h-\hat{u}}{} \nonumber\\
&\lesssim_{\eta}{} \alpha h^2 \norm{\ud}{H^2(\O)}\norm{\bu_h-\hat{u}}{}. \label{m4.ineq}
\end{align} 
A substitution of \eqref{m1.ineq}, \eqref{m2.ineq} (together with the estimates
\eqref{a1new.ineq} and \eqref{a2.ineq} on $A_1$ and $A_2$), \eqref{m3.ineq} and \eqref{m4.ineq} into \eqref{a3new.ineq} yields an estimate on $\norm{\bu_h-\hat{u}}{}$ which, when
plugged into \eqref{a3.ineq}, gives
\be
\begin{aligned}
A_{3} \lesssim{}&   \norm{\bu_h-\hat{u}}{} \\
\lesssim_{\eta}{}& \alpha^{-1}
h^{2-\frac{1}{2^*}}\norm{\bu}{W^{1,\infty}(\mesh_1)}\\
&+\alpha^{-1}h^2\big[\norm{\by -\yd}{H^1(\O)}+  \norm{\by}{H^2(\O)}+
(1+\alpha^{-1})\norm{\bp}{H^2(\O)}\\
&\quad\quad+\norm{f+\bu}{H^1(\O)}+\norm{\bu}{H^1(\O)} +(1+\alpha)\norm{\ud}{H^2(\O)}\big]. \label{a3new1.ineq}
\end{aligned}
\ee

\textbf{Step 4}: conclusion.

It is easy to check that $|P_{[a,b]}(s)|\le \minmod(a,b)+|s|$, where $\minmod$ is
defined in Theorem \ref{thm.superconvergence}.
Hence, by \eqref{Projection1} and Lipschitz continuity of $P_{[a,b]}$,
\begin{align}\label{est.bu.H1}
\norm{\bu}{H^1(\O)}\le{}&\norm{P_{[a,b]}\left(\ud-\alpha^{-1}\bp\right)}{L^2(\O)}
+\norm{\nabla \left(P_{[a,b]}\left(\ud-\alpha^{-1}\bp\right)\right)}{L^2(\O)^n} \nonumber\\
\le{}& \minmod(a,b)|\O|^{1/2}+2\norm{\ud-\alpha^{-1}\bp}{H^1(\O)} \nonumber\\
\le{}& \minmod(a,b)|\O|^{1/2}+2\norm{\ud}{H^1(\O)}+2\alpha^{-1}\norm{\bp}{H^1(\O)}.
\end{align}
Using this inequality and
inserting \eqref{a1new.ineq}, \eqref{a2.ineq} and \eqref{a3new1.ineq} in \eqref{ineq},
the proof of Theorem \ref{thm.superconvergence} is complete.
\end{proof}

\begin{proof}[Proof of Theorem \ref{thm.fullsuperconvergence}]
The proof of this theorem is identical to the proof of Theorem \ref{thm.superconvergence}, except for the estimate 
of $A_{21}$. This estimate is the only source of the $2-\frac{1}{2^*}$ power (instead of 2),
and the only place where we used Assumption \assum{3}, here replaced by \eqref{Linfty.est}.
The estimate of $A_{21}$ using this $L^\infty$-bound assumption is actually rather simple.
Recalling \assum{4} and using \eqref{Linfty.est} on the equation \eqref{subtract_aux2}
satisfied by  $p_\disc^*(\bu)-p_\disc^*(\hat{u})$, we write
	\begin{align}
	A_{21}&=\int_{\O_{1,\mesh}}(\Proj\bu-\hat{u})\big(\Pi_\disc p_\disc^*(\bu)-\Pi_\disc p_\disc^*(\hat{u})\big)\d\x  
\nonumber\\
	&\lesssim \norm{\Proj\bu-\hat{u}}{L^\infty(\O_{1,\mesh})} \norm{\Pi_\disc p_\disc^*(\bu)-\Pi_\disc p_\disc^*(\hat{u})}{L^\infty(\O_{1,\mesh})}|\O_{1,\mesh}|  \nonumber\\
	&\lesssim h^2 \norm{\bu}{W^{1,\infty}(\mesh_1)}\norm{\Pi_\disc p_\disc^*(\bu)-\Pi_\disc p_\disc^*(\hat{u})}{L^\infty(\O)}  \nonumber\\
		&\lesssim h^2 \norm{\bu}{W^{1,\infty}(\mesh_1)}\delta \norm{\Pi_\disc y_\disc(\bu)-\Pi_\disc y_\disc(\hat{u})}{}.
\label{improved.A2}
	\end{align}
The rest of the proof follows from this estimate.
	\end{proof}
 
\subsection{Proof of the error estimates for the state and adjoint variables}

\begin{proof}[Proof of Proposition \ref{theorem.state.adj.DB}] 
Applying the triangle inequality twice, 
\begin{align*}
\norm{\Pi_\disc \by_\disc-\by}{}+\norm{\nabla_\disc \by_\disc-\nabla\by}{}  
\le{}&\norm{\Pi_\disc \by_\disc-\Pi_\disc y_\disc(\bu) }{}+\norm{\Pi_\disc y_\disc(\bu)-\by}{} \\
&+ \norm{\nabla_\disc \by_\disc-\nabla_\disc y_\disc(\bu)}{} +\norm{\nabla_\disc y_\disc(\bu)-\nabla\by}{} .
\end{align*}
The second and last  terms on the right hand side of the above inequality are estimated using Theorem \ref{th:error.est.PDE} as
$$ \norm{\Pi_\disc y_\disc(\bu)-\by}{}+\norm{\nabla_\disc y_\disc(\bu)-\nabla\by}{} \lesssim \WS_\disc(\by) .$$ 
Subtracting \eqref{discrete_state} and \eqref{state_aux}, and using the stability property of
GSs (Proposition \ref{prop.stab}), we obtain
$$ \norm{\Pi_\disc \by_\disc-\Pi_\disc y_\disc(\bu) }{}+\norm{\nabla_\disc \by_\disc-\nabla_\disc y_\disc(\bu)}{} \lesssim \norm{\bu-\bu_h}{}.$$
A combination of the above two results yields the error estimates \eqref{est.basic.y} for the state variable. The error estimate for the adjoint variable can be obtained similarly.
\end{proof}

\begin{proof}[Proof of Corollary \ref{cor.superconvergence}]
A use of triangle inequality leads to
\begin{align}
\norm{\by_\mesh-\Pi_\disc \by_\disc}{} \le{}& \norm{\by_\mesh-\Pi_\disc y_\disc(\bu)}{}
+\norm{\Pi_\disc y_\disc(\bu)-\Pi_\disc y_\disc(\hat{u})}{} \nonumber\\ 
&+\norm{\Pi_\disc y_\disc(\hat{u})-\Pi_\disc \by_\disc }{}. 
\label{y.tri.ineq}
\end{align}
Consider the first term on the right hand side of \eqref{y.tri.ineq}. Using the Assumption  $\assum{1}$-i), we obtain
\be \label{y.first.term}
\norm{\by_\mesh-\Pi_\disc y_\disc(\bu) }{}  \lesssim  h^2\norm{\bu+f}{H^1(\O)}.
\ee

Under the assumptions of Theorem \ref{thm.superconvergence}, the second term on the right hand side of \eqref{y.tri.ineq} is estimated by \eqref{yd.ineq},
and the third term is estimated by using \eqref{a3.ineq} and \eqref{a3new1.ineq}.
We plug these estimates alongside \eqref{y.first.term} into \eqref{y.tri.ineq},
and use \eqref{est.bu.H1} to conclude the proof of \eqref{eq_supercv.y}. The result for the adjoint variable can be derived similarly. 

The full $h^2$ estimates are obtained, under the assumptions of Theorem \ref{thm.fullsuperconvergence},
by following the same reasoning and using the improved estimate \eqref{improved.A2} on $A_{21}$ (which leads
to improved estimates \eqref{yd.ineq} and \eqref{a3new1.ineq}).



\end{proof}

\section{The case of Neumann BC, with distributed and boundary control} \label{section5}

\subsection{Model}

Consider the distributed and boundary
optimal control problem governed by elliptic equations with Neumann BC
given by:
\begin{subequations}\label{model.neumann}
	\begin{align}
	&  { \min_{(u,u_b) \in \Uad} J(y, u,u_b) \quad \textrm{ subject to } }  \label{cost1}\\
	&  {-\div(A\nabla y) +c_0y = f+ u  \quad \mbox{ in } \Omega,  }\label{state11} \\
	&  {A\nabla y\cdot\bfn_{\O}  =f_b+u_b \quad\mbox{ on $\partial\O$},} \label{state21}
	\end{align}
\end{subequations} 
where  $\Omega$, $A$ and $f$ are as in Section \ref{sec:model},
$f_b\in L^2(\dr\O)$, $c_0 >0 $ is a positive constant, $\bfn_\O$ is the outer unit normal to $\O$,
$y$ is the state variable, and $u,u_b$ are the control variables.
The cost functional is
\[
J(y, u,u_b) :=\frac{1}{2}\norm{y- \yd}{}^2 + \frac{\alpha}{2} \norm{u}{}^2
+\frac{\beta}{2} \norm{u_b}{L^2(\dr\O)}^2
\]
with $\alpha>0$ and $\beta>0$ being fixed regularization parameters
and $\yd\in L^2(\O)$ being the desired state variable.
The set of admissible controls $\Uad \subset L^2(\Omega)
\times L^2(\dr\O)$ is a non-empty, convex and closed set. 
For a general element $V\in L^2(\O)\times L^2(\dr\O)$, $v$ and $v_b$ denote its components
in $L^2(\O)$ and $L^2(\dr\O)$, that is, $V=(v,v_b)$.

It is well known that given $U=(u,u_b) \in \Uad$, there exists a unique weak solution $y(U) \in H^1(\Omega)$ of \eqref{state11}-\eqref{state21}. 
That is, $y(U) \in H^1(\Omega)$ such that, for all $w\in H^1(\O)$,
\begin{equation}\label{weak_state1}
a(y(U),w)=\int_\O (f+u) {w} \d\x + \int_{\dr\O} (f_b+u_b)\gamma(w)\d s(\x),
\end{equation}
where $a(z,w)=
\int_\O (A\nabla z\cdot\nabla w +c_0zw)\d\x$ and $\gamma:H^1(\O) \rightarrow L^2(\dr\O)$ is
the trace operator. 

\medskip

Here and throughout, $\norm{\cdot}{\dr}$ and $(\cdot,\cdot)_\dr$ denote the norm and scalar product in $L^2(\dr\O)$. We also denote $\SCAL{\cdot}{\cdot}$ as the scalar
product on $L^2(\O)\times L^2(\dr\O)$ defined by
\[
\forall U,V\in L^2(\O)\times L^2(\dr\O)\,,\quad
\SCAL{U}{V}=\alpha(u,v)+\beta(u_b,v_b)_\dr.
\]
The convex control problem \eqref{model.neumann} has a unique solution $(\by,\bU) \in H^1(\O) \times \Uad $ and there exists a co-state $\bp \in H^1(\O)$ such that the triplet $(\by, \bp, \bU) \in H^1(\O) \times H^1(\O) \times \Uad$ satisfies the Karush-Kuhn-Tucker (KKT) optimality conditions \cite{jl}:
\begin{subequations} \label{continuous1}
	\begin{align}
	& a(\by,w) = (f + \bu, w) + (f_b+\bub,\gamma(w))_\dr\,  &\   \forall \: w \in H^1(\O), \\
	& a(w,\bp) = (\by-\yd, w)\,  &\   \forall \: w \in H^1(\O), \label{adj_cont1}\\
	& \SCAL{\bU+\bP_{\alpha,\beta}}{V-\bU}\geq     0\,  &\  \forall \: V\in  \Uad, \label{opt_cont1}
	\end{align}
\end{subequations}
where $\bP_{\alpha,\beta}=(\alpha^{-1} \bp,\beta^{-1}\gamma(\bp))$.

\subsection{The GDM for elliptic equations with Neumann BC} \label{section6}

\begin{definition}[GD for Neumann BC with reaction]\label{def:GD1}
	A gradient discretisation for Neumann BC is a quadruplet $\disc=(X_{\disc},\Pi_\disc,\trrec_\disc,\nabla_\disc)$ such that
	\begin{itemize}
		\item $X_{\disc}$ is a finite dimensional space of degrees of freedom,
		\item $\Pi_\disc:X_{\disc}  \rightarrow L^2(\O)$ is a linear mapping that reconstructs a function from
		the degrees of freedom,
		\item $\trrec_\disc:X_{\disc}  \rightarrow L^2(\dr\O)$ is a linear mapping that reconstructs a trace
		from the degrees of freedom,
		\item $\nabla_\disc:X_{\disc}  \rightarrow L^2(\O)^n$ is a linear mapping that reconstructs
		a gradient from the degrees of freedom.
		\item The following quantity is a norm on $X_{\disc}$:
		\be\label{def:norm1}
		\norm{w}{\disc}:= \norm{\nabla_\disc w}{}+\norm{\Pi_\disc w}{}.
		\ee
	\end{itemize}
\end{definition}

If $F\in L^2(\O)$ and $G\in L^2(\dr\O)$,
a GS for a linear elliptic problem
\be\label{base1}
\left\{
\ba
-\div(A\nabla \psi)+c_0 \psi=F\mbox{ in $\O$},\\
A\nabla \psi\cdot\bfn_\O=G\mbox{ on $\dr\O$}
\ea
\right.
\ee
is then obtained from a GD $\disc$ by writing:
\be\label{base.GS1}
\begin{aligned}
	&\mbox{Find $\psi_\disc\in X_{\disc}$ such that, for all $w_\disc\in X_{\disc}$,}\\
	&a_\disc(\psi_\disc,w_\disc)=
	\int_\O F\Pi_\disc w_\disc\d\x+\int_{\dr\O} G\trrec_\disc w_\disc\d s(\x),
\end{aligned}
\ee
where $a_\disc(\psi_\disc,w_\disc)=\int_\O (A\nabla_\disc \psi_\disc\cdot\nabla_\disc w_\disc +c_0\Pi_\disc \psi_\disc \Pi_\disc w_\disc)\d\x.$

For Neumann boundary value problems, the quantities $C_\disc$, $S_\disc$ and $W_\disc$
measuring the accuracy of the GS are defined as follows.
\be\label{def.CD1}
C_\disc := \max_{w\in X_{\disc}\backslash\{0\}} 
\left(\frac{\norm{\trrec_\disc w}{\dr}}{\norm{w}{\disc}}, \frac{\norm{\Pi_\disc w}{}}{\norm{w}{\disc}}\right).
\ee
\begin{align}
	\forall{}&\varphi\in H^1(\O)\,, \nonumber\\
	&S_\disc(\varphi)=\min_{w\in X_{\disc}}\Big(\norm{\Pi_\disc w-\varphi}{}
	+\norm{\trrec_\disc w-\gamma(\varphi)}{\dr}	+\norm{\nabla_\disc w-\nabla\varphi}{}\Big)\label{def.SD1}.\\
	\forall{}& \bvarphi\in H_{\div,\dr}(\O)\,,\nonumber \\
	&W_\disc(\bvarphi)=\max_{w\in X_{\disc}\backslash\{0\}}
\frac{1}{\norm{w}{\disc}}\Bigg|\int_\O\!\! \Pi_\disc w \div(\bvarphi)
		+\nabla_\disc w\cdot\bvarphi\d\x \nonumber \\
	& \qquad \qquad \qquad \qquad \qquad \qquad \qquad \qquad - \int_{\dr\O}\trrec_\disc w\gamma_{\bfn}(\bvarphi)\d s(\x)\Bigg|,\label{def.WD1}
\end{align}
where $\gamma_{\bfn}$ is the normal trace on $\dr\O$, and
$H_{\div,\dr}(\O)=\{\bvarphi\in L^2(\O)^n\,:\,\div(\bvarphi)\in L^2(\O)\,,\;
\gamma_{\bfn}(\bvarphi)\in L^2(\dr\O)\}$.

Using these quantities, we define $\WS_\disc$ as in \eqref{def.ws} and we have
the following error estimate.

\begin{theorem}[Error estimate for the PDE with Neumann BC]
\label{th:error.est.PDE1} Let $\disc$ be a GD in the
	sense of Definition \ref{def:GD1}, let $\psi$ be the solution in $H^1(\O)$
	to \eqref{base1}, and let $\psi_\disc$  be the solution to \eqref{base.GS1}. Then
\[
	\norm{\Pi_\disc \psi_\disc-\psi}{}+\norm{\nabla_\disc \psi_\disc-\nabla\psi}{}
	+\norm{\trrec_\disc \psi_\disc-\gamma(\psi)}{\dr}
	\lesssim \WS_\disc(\psi).
\]
\end{theorem}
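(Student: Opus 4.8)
The plan is to reproduce, in the Neumann setting, the standard GDM error-analysis argument already underlying the Dirichlet estimate of Theorem \ref{th:error.est.PDE}. First I would record that $\bvarphi:=A\nabla\psi$ belongs to $H_{\div,\dr}(\O)$: since $\psi$ solves \eqref{base1}, one has $\div(A\nabla\psi)=c_0\psi-F\in L^2(\O)$ and the normal trace is $\gamma_{\bfn}(A\nabla\psi)=G\in L^2(\dr\O)$, so that $W_\disc(A\nabla\psi)$, and hence $\WS_\disc(\psi)$, is well defined. The key algebraic step is then to rewrite the right-hand side of the scheme \eqref{base.GS1} in terms of the exact solution. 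Substituting $F=c_0\psi-\div(A\nabla\psi)$ and $G=\gamma_{\bfn}(A\nabla\psi)$ and recognising the boundary-corrected expression
\[
\tW_\disc(\bvarphi,v_\disc):=\int_\O\big(\Pi_\disc v_\disc\,\div(\bvarphi)+\nabla_\disc v_\disc\cdot\bvarphi\big)\d\x-\int_{\dr\O}\trrec_\disc v_\disc\,\gamma_{\bfn}(\bvarphi)\d s(\x)
\]
from \eqref{def.WD1}, I obtain, for every $v_\disc\in X_\disc$, the identity
\[
a_\disc(\psi_\disc,v_\disc)=\int_\O\big(A\nabla\psi\cdot\nabla_\disc v_\disc+c_0\psi\,\Pi_\disc v_\disc\big)\d\x-\tW_\disc(A\nabla\psi,v_\disc),
\]
where the last term is bounded by $W_\disc(A\nabla\psi)\norm{v_\disc}{\disc}$ by the very definition \eqref{def.WD1}.

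Next I would introduce an arbitrary $w_\disc\in X_\disc$ acting as an interpolant of $\psi$ and test this identity with $v_\disc=\psi_\disc-w_\disc$. Moving the $w_\disc$ contribution to the right-hand side, the estimate reads $a_\disc(\psi_\disc-w_\disc,v_\disc)=\int_\O A(\nabla\psi-\nabla_\disc w_\disc)\cdot\nabla_\disc v_\disc+c_0(\psi-\Pi_\disc w_\disc)\Pi_\disc v_\disc\,\d\x-\tW_\disc(A\nabla\psi,v_\disc)$. Coercivity of $a_\disc$ for the norm $\norm{\cdot}{\disc}$ of \eqref{def:norm1}—which here relies on \emph{both} the uniform ellipticity of $A$ (constant $\underline a$) and on $c_0>0$, giving $a_\disc(v_\disc,v_\disc)\ge\tfrac{\min(\underline a,c_0)}{2}\norm{v_\disc}{\disc}^2$—combined with the boundedness of $A$, Cauchy–Schwarz, the $W_\disc$ bound, and $\norm{\nabla_\disc v_\disc}{}\le\norm{v_\disc}{\disc}$, $\norm{\Pi_\disc v_\disc}{}\le\norm{v_\disc}{\disc}$, yields after division by $\norm{v_\disc}{\disc}$:
\[
\norm{\psi_\disc-w_\disc}{\disc}\lesssim\norm{\psi-\Pi_\disc w_\disc}{}+\norm{\nabla\psi-\nabla_\disc w_\disc}{}+W_\disc(A\nabla\psi).
\]

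Finally I would conclude by the triangle inequality applied to each of the three target quantities. Indeed $\norm{\Pi_\disc\psi_\disc-\psi}{}\le\norm{\psi_\disc-w_\disc}{\disc}+\norm{\Pi_\disc w_\disc-\psi}{}$ and $\norm{\nabla_\disc\psi_\disc-\nabla\psi}{}\le\norm{\psi_\disc-w_\disc}{\disc}+\norm{\nabla_\disc w_\disc-\nabla\psi}{}$, while the trace term additionally invokes $\norm{\trrec_\disc(\psi_\disc-w_\disc)}{\dr}\le C_\disc\norm{\psi_\disc-w_\disc}{\disc}$ from \eqref{def.CD1}, giving $\norm{\trrec_\disc\psi_\disc-\gamma(\psi)}{\dr}\le C_\disc\norm{\psi_\disc-w_\disc}{\disc}+\norm{\trrec_\disc w_\disc-\gamma(\psi)}{\dr}$. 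Summing the three and inserting the bound on $\norm{\psi_\disc-w_\disc}{\disc}$, every right-hand side is controlled by $W_\disc(A\nabla\psi)$ plus the three interpolation errors of $w_\disc$; taking the infimum over $w_\disc\in X_\disc$ turns those three errors into $S_\disc(\psi)$ by \eqref{def.SD1}, which delivers the announced bound by $\WS_\disc(\psi)=W_\disc(A\nabla\psi)+S_\disc(\psi)$.

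The step demanding the most care is the limit-conformity bookkeeping in the first paragraph: matching the boundary integral $\int_{\dr\O}G\,\trrec_\disc v_\disc\,\d s(\x)$ and the bulk term $-\int_\O\div(A\nabla\psi)\Pi_\disc v_\disc\,\d\x$ with the precise sign conventions of $\tW_\disc$ in \eqref{def.WD1}, so that exactly the quantity measured by $W_\disc$ materialises (this is where the Neumann analysis differs from the Dirichlet one, since the trace reconstruction $\trrec_\disc$ now enters). Everything else is routine; in particular the presence of the reaction term $c_0>0$ makes $a_\disc$ coercive for the full norm $\norm{\cdot}{\disc}$ and removes any need for a discrete Poincaré inequality or a Neumann compatibility condition.
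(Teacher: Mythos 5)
Your proof is correct and follows essentially the same route as the paper: for the function and gradient estimate the paper invokes the standard Dirichlet-style argument (citing \cite[Theorem 3.11]{koala}), which is precisely the limit-conformity/coercivity computation you spell out, and it then obtains the trace bound exactly as you do, by applying the $C_\disc$ bound from \eqref{def.CD1} to the difference between $\psi_\disc$ and an interpolant realising $S_\disc(\psi)$ and concluding with triangle inequalities. The only cosmetic differences are that you run all three estimates in a single pass with one $w_\disc$ (taking the infimum at the end, whereas the paper first establishes the bulk estimate and then treats the trace separately) and that you note the hidden constants also involve $c_0$ through the coercivity bound $\min(\underline{a},c_0)$, which is harmless since $c_0$ is a fixed constant.
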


\begin{proof}
	The estimate 
	\be\label{est.PiD.nablaD1}
	\norm{\Pi_\disc \psi_\disc - \psi}{}+\norm{\nabla_\disc \psi_\disc-\nabla \psi}{}
	\lesssim \WS_\disc(\psi)
	\ee
	is standard, and can be established as for homogeneous Dirichlet BC (see, e.g., \cite[Theorem 3.11]{koala}
	for the pure Neumann problem). The estimate on the traces is less standard,
	and hence we detail it now.
	Introduce an interpolant
	\[
	\mathcal P_\disc \psi\in \mathop{\rm argmin}_{w\in X_\disc}\Big(\norm{\Pi_\disc w-\psi}{}
	+\norm{\trrec_\disc w-\gamma(\psi)}{\dr}
	+\norm{\nabla_\disc w-\nabla\psi}{}\Big)
	\]
	and notice that
	\be\label{app.tr:interp.neu1}
	\norm{\Pi_\disc \mathcal P_\disc\psi-\psi}{}+
	\norm{\trrec_\disc \mathcal P_\disc\psi-\gamma(\psi)}{\dr}
	+\norm{\nabla_\disc \mathcal P_\disc\psi-\nabla\psi}{}
	\le {S}_\disc (\psi).
	\ee
	By definition of $C_\disc$ and of the norm $\norm{\cdot}{\disc}$,
	 for all $v\in X_\disc$,
	\[
	\norm{\trrec_\disc v}{\dr}\le C_\disc \left(\norm{\Pi_\disc v}{}+
	\norm{\nabla_\disc v}{}\right).
	\]
	Substituting $v=\psi_\disc-\mathcal P_\disc \psi$,
	a triangle inequality and \eqref{app.tr:interp.neu1} therefore lead to
	\begin{align}
	\Vert \trrec_\disc \psi_\disc&-\gamma(\psi)\Vert_{\dr}\nonumber\\
	\le{}&
	\norm{\trrec_\disc (\psi_\disc- \mathcal P_\disc\psi)}{\dr}
	+\norm{\trrec_\disc \mathcal P_\disc\psi-\gamma(\psi)}{\dr}\nonumber\\
	\le{}& C_\disc\left(\norm{\Pi_\disc \psi_\disc
		-\Pi_\disc \mathcal P_\disc\psi}{}+\norm{\nabla_\disc \psi_\disc-
		\nabla_\disc \mathcal P_\disc\psi}{}\right)
	+S_\disc(\psi).
	\label{end.est.tr1}
	\end{align}
	We then use the triangle inequality again and the estimates
	\eqref{est.PiD.nablaD1} and \eqref{app.tr:interp.neu1} to write
	\begin{align*}
	\Vert \Pi_\disc \psi_\disc-&\Pi_\disc \mathcal P_\disc\psi\Vert
	+\norm{\nabla_\disc \psi_\disc-
		\nabla_\disc\mathcal  P_\disc\psi}{}\\
	\le{}& 
	\norm{\Pi_\disc \psi_\disc-\psi}{}
	+\norm{\psi-\Pi_\disc \mathcal P_\disc\psi}{}
	+\norm{\nabla_\disc \psi_\disc-\nabla\psi}{}
	+\norm{\nabla\psi-\nabla_\disc \mathcal P_\disc\psi}{}\\
		\lesssim{}& \WS_\disc(\psi).
	\end{align*}
	The proof is complete by plugging this result in \eqref{end.est.tr1}.
\end{proof}




\subsection{The gradient discretisation method for the Neumann control problem}

Let $\disc$ be a GD as in Definition \ref{def:GD1}, $\Uh$ be a finite dimensional space of $L^2(\Omega)$, and set $\Uadh = \Uad \cap \Uh$.
A GS for \eqref{continuous1} consists in 
seeking $(\by_{\disc}, \bp_{\disc}, \bU_{h})\in X_{\disc} \times X_{\disc} \times \Uadh$,
with $\bU_h=(\bu_h,\bubh)$, such that
\begin{subequations} \label{discrete_kkt1}
	\begin{align}
	& a_{\disc}(\by_{\disc},w_{\disc}) = (f + \bu_{h}, \Pi_\disc w_{\disc})+(f_b+\bubh,\trrec_\disc w_\disc)_\dr \,
	&\forall \: w_{\disc} \in  X_{\disc},  \label{discrete_state1} \\
	& a_{\disc}(w_{\disc},\bp_{\disc}) = (\Pi_\disc \by_{\disc}-\yd, \Pi_\disc w_{\disc})  \,&  \forall \: w_{\disc} \in  X_{\disc}, \label{discrete_adjoint1} \\
	& \SCAL{\bU_h + \bP_{\disc,\alpha,\beta} }{V_h-\bU_h} \geq     0\,
	&\forall \: V_h \in  \Uadh, \label{opt_discrete1}
	\end{align}
\end{subequations} 
where $\bP_{\disc,\alpha,\beta}=
(\alpha^{-1}\Pi_\disc \bp_{\disc},\beta^{-1}\trrec_\disc {\bar p}_\disc)$.


Let $\Prh:L^2(\O)\times L^2(\dr\O)  \rightarrow \Uh$ be the $L^2$ orthogonal projection on $\Uh$ for the scalar product $\SCAL{\cdot}{\cdot}$. 
We denote the norm on $L^2(\O)\times L^2(\dr\O)$ associated to $\SCAL{\cdot}{\cdot}$
by $\NORM{\cdot}$, so that $\NORM{V}=\sqrt{\alpha\norm{v}{}^2+\beta\norm{v_b}{}^2}$.
If $W\in L^2(\O)\times L^2(\partial\O)$, we define
\[
E_h(W)=\NORM{W-\Prh W}.
\]

\begin{theorem}[Control estimate] \label{theorem.control.NB}
	Let $\disc$ be a GD in the sense of Definition \ref{def:GD1}, $\bU$ be the optimal control for \eqref{continuous1}
	and $\bU_h$ be the optimal control for the GS \eqref{discrete_kkt1}.
	We assume that
	\be\label{stab.proj1}
	\Prh(\Uad)\subset \Uadh.
	\ee
	Then there exists $C$ only depending on $\O$, $A$, $\alpha$, $\beta$ and an upper bound of $C_\disc$ such that
\[
	\NORM{\bU-\bU_h}\le C\Big(E_h(\bP_{\alpha,\beta})+E_h(\bU)
	+\WS_\disc(\bp)+\WS_\disc(\by)\Big).
\]
\end{theorem}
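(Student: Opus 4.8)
The plan is to transpose, almost verbatim, the argument proving Theorem \ref{theorem.control.DB} to the Neumann setting; the only structural changes are the replacement of the weighted scalar product $\alpha(\cdot,\cdot)$ by $\SCAL{\cdot}{\cdot}$, the appearance of the trace reconstruction $\trrec_\disc$ alongside $\Pi_\disc$, and the use of Theorem \ref{th:error.est.PDE1} (which also controls the trace) in place of Theorem \ref{th:error.est.PDE}. First I would record the Neumann analogue of the stability Proposition \ref{prop.stab}: testing \eqref{base.GS1} with $w_\disc=\psi_\disc$ and using the coercivity of $a_\disc$ --- which now comes from both the diffusive term and the reaction term $c_0\norm{\Pi_\disc \psi_\disc}{}^2$, so that $a_\disc(\psi_\disc,\psi_\disc)\gtrsim \norm{\psi_\disc}{\disc}^2$ --- together with the bound $\norm{\trrec_\disc w}{\dr}+\norm{\Pi_\disc w}{}\le 2C_\disc\norm{w}{\disc}$ from \eqref{def.CD1}, yields $\norm{\Pi_\disc \psi_\disc}{}+\norm{\trrec_\disc \psi_\disc}{\dr}\lesssim \norm{F}{}+\norm{G}{\dr}$.

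I would then introduce the two auxiliary discrete problems: let $y_\disc(\bU)\in X_\disc$ solve the discrete state equation \eqref{discrete_state1} driven by the exact control components $(\bu,\bub)$, and let $p_\disc(\bU)\in X_\disc$ solve the discrete adjoint equation \eqref{discrete_adjoint1} driven by the exact state $\by$. These are precisely the gradient-scheme approximations of $\by$ and $\bp$, so Theorem \ref{th:error.est.PDE1} bounds $\norm{\by-\Pi_\disc y_\disc(\bU)}{}$ by $\WS_\disc(\by)$ and $\norm{\bp-\Pi_\disc p_\disc(\bU)}{}+\norm{\gamma(\bp)-\trrec_\disc p_\disc(\bU)}{\dr}$ by $\WS_\disc(\bp)$. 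Set $P_{\disc,\alpha,\beta}(\bU)=(\alpha^{-1}\Pi_\disc p_\disc(\bU),\beta^{-1}\trrec_\disc p_\disc(\bU))$. The crucial algebraic simplification is that the weights cancel in the pairing, namely $\SCAL{\bP_{\alpha,\beta}}{W}=(\bp,w)+(\gamma(\bp),w_b)_\dr$ and likewise for the discrete quantities, so every pairing reduces to a plain duality between the adjoint and the control, which is exactly what can be fed into the state and adjoint equations.

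Next I would derive the master inequality as for \eqref{inter1}--\eqref{inter3}: test the continuous optimality \eqref{opt_cont1} with $V=\bU_h$ (legitimate since $\Uadh\subset\Uad$) and the discrete optimality \eqref{opt_discrete1} with $V_h=\Prh\bU$ (legitimate by \eqref{stab.proj1}), then add them and expand $\SCAL{\bU_h-\bU}{\bU-\bU_h}=-\NORM{\bU-\bU_h}^2$ to obtain
\[
\NORM{\bU-\bU_h}^2\le \SCAL{\bP_{\disc,\alpha,\beta}-\bP_{\alpha,\beta}}{\bU-\bU_h}-\SCAL{\bU_h+\bP_{\disc,\alpha,\beta}}{\bU-\Prh\bU}.
\]
I would split these two right-hand terms as in \eqref{inter4}--\eqref{inter5}, inserting $\Prh\bU$, $P_{\disc,\alpha,\beta}(\bU)$ and $\bP_{\alpha,\beta}$ and repeatedly using the $\SCAL{\cdot}{\cdot}$-orthogonality of $\Prh$, which kills every term pairing an element of $\Uh$ against $\bU-\Prh\bU$. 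The genuinely analytic step is the term $\SCAL{\bP_{\disc,\alpha,\beta}-P_{\disc,\alpha,\beta}(\bU)}{\bU-\bU_h}$: subtracting the adjoint equations shows $\bp_\disc-p_\disc(\bU)$ solves the GS with source $\Pi_\disc\by_\disc-\by$, subtracting the state equations shows $\by_\disc-y_\disc(\bU)$ solves the GS with control data $\bU_h-\bU$, and testing one against the other and invoking the symmetry of $a_\disc$ turns this pairing into $-(\Pi_\disc\by_\disc-\by,\Pi_\disc\by_\disc-\Pi_\disc y_\disc(\bU))$, i.e.\ a negative squared term $-\norm{\Pi_\disc\by_\disc-\Pi_\disc y_\disc(\bU)}{}^2$ plus a cross term controlled by $\WS_\disc(\by)$ through Theorem \ref{th:error.est.PDE1}. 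The remaining pairings are bounded by the Cauchy--Schwarz inequality in $\SCAL{\cdot}{\cdot}$, giving the factors $E_h(\bP_{\alpha,\beta})$ and $E_h(\bU)$; by Theorem \ref{th:error.est.PDE1}, giving $\WS_\disc(\bp)$; and by the stability estimate of the first step applied to $p_\disc(\bU)-\bp_\disc$, giving $\WS_\disc(\by)$ after re-using Theorem \ref{th:error.est.PDE1} on $\by-\Pi_\disc y_\disc(\bU)$.

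Finally, I would absorb $\tfrac12\NORM{\bU-\bU_h}^2$ and the quadratic state term into the left-hand side by Young's inequality, and collect the constants (now legitimately allowed to depend on $\alpha$ and $\beta$) to reach the stated bound. The main obstacle I anticipate is organisational rather than conceptual: keeping track of the two control components and of the $\alpha,\beta$ weights so that the trace contributions of $\trrec_\disc p_\disc(\bU)$ are absorbed by the trace part of $\WS_\disc(\bp)$ in Theorem \ref{th:error.est.PDE1}, and checking that the cancellation $\SCAL{\bP_{\alpha,\beta}}{\cdot}=(\bp,\cdot)+(\gamma(\bp),\cdot)_\dr$ makes the duality arguments with the state and adjoint equations close exactly as in the Dirichlet case. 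No new estimate beyond the Neumann stability bound and Theorem \ref{th:error.est.PDE1} should be required.
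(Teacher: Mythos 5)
Your proposal is correct and takes essentially the same route as the paper, whose proof of Theorem \ref{theorem.control.NB} consists precisely of the remark that the Dirichlet argument of Theorem \ref{theorem.control.DB} goes through verbatim with $\ud=0$, $(\cdot,\cdot)$ replaced by $\SCAL{\cdot}{\cdot}$, and the obvious substitutions $\bP_{\disc,\alpha}\leadsto\bP_{\disc,\alpha,\beta}$, $\bu_h\leadsto\bU_h$. You have simply made that transposition explicit, correctly supplying the three Neumann-specific ingredients the paper leaves implicit: the stability bound via $c_0$-coercivity of $a_\disc$, the weight cancellation $\SCAL{\bP_{\alpha,\beta}}{W}=(\bp,w)+(\gamma(\bp),w_b)_\dr$ that makes the duality step close, and Theorem \ref{th:error.est.PDE1} (with its trace control) in place of Theorem \ref{th:error.est.PDE}.
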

\begin{proof}
	The proof is identical to the proof of Theorem \ref{theorem.control.DB} (taking $\ud=0$),
with obvious substitutions (e.g. $\bP_{\disc,\alpha}\leadsto \bP_{\disc,\alpha,\beta}$
and $\bu_h\leadsto \bU_h$)
and the $L^2$ inner products $(\cdot,\cdot)$ replaced by $\SCAL{\cdot}{\cdot}$ whenever they
involve $\bP_{\disc,\alpha}$ or $\bu_h$.
\end{proof}

\begin{remark}[Super-convergence of the control for Neumann problems]
Using the same technique as in the proof of Theorem \ref{thm.superconvergence}, and
extending the assumptions \assum{1}--\assum{4} to boundary terms in a natural way
(based on trace inequalities and Sobolev embedding of $H^{1/2}(\partial\O)$),
an $\mathcal O(h^{3/2})$ super-convergence result can be proved on post-processed
controls for Neumann BC.
\end{remark}

\begin{remark}
Consider the distributed
optimal control problem governed by elliptic equations with Neumann BC
given by:
\begin{subequations}\label{model.neumann1}
	\begin{align}
	&  { \min_{u \in \Uad} J(y, u) \quad \textrm{ subject to } }  \label{cost2}\\
	&  {-\div(A\nabla y) = u  \quad \mbox{ in } \Omega,  }\label{state22} \\
	&  {A\nabla y\cdot\bfn_{\O}  =0 \quad\mbox{ on $\partial\O$},\quad \int_{\O}y(\x)d\x=0,} \label{state32}
	\end{align}
\end{subequations} 
where  $\Omega$ and $A$ are as in Section \ref{sec:model}.
The cost functional is \eqref{costfunctional} with $\bu_d=0$ and $\yd\in L^2(\O)$ is such that $\int_\O \by_d(\x)d\x=0$.
Fixing $a<0<b$, the admissible set of controls is chosen as
$$\Uad=\left\{u\in L^2(\O) \,:\,a\le u\le b \mbox{ a.e. and }\int_\O u(\x)d\x=0\right\}.$$

For a given $u \in \Uad$, there exists a unique weak solution $y(u) \in H^1_\star(\Omega):=\{w\in H^1(\O)\,:\int_\O w(\x) \d\x=0\}$ of \eqref{state22}--\eqref{state32}.\\

The convex control problem \eqref{model.neumann1} has a unique solution $(\by,\bu) \in H^1_\star(\O) \times \Uad $ and there exists a co-state $\bp \in H^1_\star(\O)$ such that the triplet $(\by, \bp, \bu) \in H^1_\star(\O) \times H^1_\star(\O) \times \Uad$ satisfies the Karush-Kuhn-Tucker (KKT) optimality conditions \cite{jl}:
\begin{subequations} \label{continuous2}
	\begin{align}
	& a(\by,w) = (\bu, w) \,  &\   \forall \: w \in H^1(\O), \\
	& a(w,\bp) = (\by-\yd, w)\,  &\   \forall \: w \in H^1(\O), \label{adj_cont2}\\
	& (\bp+\alpha\bu,v-\bu)\geq 0\,  &\  \forall \: v\in  \Uad, \label{opt_cont2}
	\end{align}
where $a(z,w)=
\int_\O A\nabla z\cdot\nabla w\d\x$.
	
\end{subequations}
Then, $\bu$ can be characterized in terms of the projection formula given by
 $$\bu=P_{[a,b]}(-\alpha^{-1}\bp+c),$$ where $c$ is a constant chosen such that $\int_\O \bu(x)d\x=0$. 
The existence and uniqueness of this $c$ follows by noticing that $\Gamma(c):=\int_{\O}P_{[a,b]}(-\alpha^{-1}\bp+c)$ is continuous, $\lim_{c \to -\infty}\Gamma(c)=a|\O|<0$, $\lim_{c \to +\infty}\Gamma(c)=b|\O|>0$, and $\Gamma$ is strictly increasing around $c$ if $\Gamma(c)=0$. The adaptation of the theoretical analysis and numerical algorithms for this problem is a topic of future research.

\end{remark}

\section{Numerical results} \label{sec_examples}
In this section, we present numerical results to support the theoretical estimates obtained in the previous sections. We use three specific schemes for the state and adjoint variables: conforming finite element method, non-conforming finite element method, and hybrid mimetic mixed (HMM) method (a family that contains, the hMFD schemes analysed for example in \cite{bre-05-con}, owing to the results
in \cite{dro-10-uni}).
We refer to \cite{DEH15} for the description of the GDs corresponding to these methods (see also
Section \ref{sec:LinftyHMM}, Appendix for the HMM GD). The control variable is discretised using piecewise constant functions. The discrete solution is computed by using the primal-dual active set algorithm, see \cite[Section 2.12.4]{tf}.

 Let the relative errors be denoted by
\[
\err_\disc(\by):=\frac{\norm{\Pi_\disc \by_\disc -\by_\tau}{}}{\norm{\by_\tau}{}},\quad
\err_\disc(\nabla\by) :=\frac{\norm{\nabla_\disc \by_\disc -\nabla\by}{}}{\norm{\nabla\by}{}}
\]
\[
\err_\disc(\bp):=\frac{\norm{\Pi_\disc \bp_\disc -\bp_\tau}{}}{\norm{\bp_\tau}{}},\quad
\err_\disc(\nabla\bp) :=\frac{\norm{\nabla_\disc \bp_\disc -\nabla\bp}{}}{\norm{\nabla\bp}{}}
\]
\[
\err(\bu):=\frac{\norm{\bu_h -\bu}{}}{\norm{\bu}{}}
\quad\mbox{ and }\quad
\err(\tu) :=\frac{\norm{\tu_h - \tu}{}}{\norm{\bu}{}}.
\]
Here, the definitions of $\tu$ and $\tu_h$ follow from \eqref{Projection2} and
the discussion in Section \ref{sec:disc.pp}:
\begin{itemize}
\item For FE methods, 
\[
\tu=P_{[a,b]}(\Proj\ud-\alpha^{-1}\bp)\mbox{ and }\tu_h=P_{[a,b]}(\Proj\ud-\alpha^{-1}\Pi_\disc\bp_\disc).
\]
\item For HMM methods, 
\begin{align*}
&\tu_{|K}=P_{[a,b]}(\Proj\ud-\alpha^{-1}\bp(\overline{\x}_K))\mbox{ for all $K\in\mesh$, and }\\
&\tu_h=P_{[a,b]}(\Proj\ud-\alpha^{-1}\Pi_\disc\bp_\disc)=\bu_h.
\end{align*}
\end{itemize}

 The $L^2$ errors of state and adjoint variables corresponding to the FE methods are computed using a seven point Gaussian quadrature formula, and the energy norms are calculated using midpoint rule. In the case of HMM, both the energy and $L^2$ norms are computed using the midpoint rule. The $L^2$  errors of control variable is computed using a three point Gaussian quadrature formula. The post-processed control corresponding to the FE methods is evaluated using a seven point Gaussian quadrature formula, whereas for the HMM methods, the post-processed control is computed using midpoint rule. For HMM methods, we can use simpler quadrature rules owing to
the fact that the reconstructed functions are piecewise constants.
These errors are plotted against the mesh parameter $h$ in the log-log scale.

\subsection{Dirichlet BC}

The model problem is constructed in such a way that the exact solution is known.

\subsubsection{Example 1}
\label{example1}
This example is taken from \cite{ABV13}.
In this experiment, the computational domain  $\O$ is taken to be the unit square $\left(0,1\right)^2$. The data in the optimal distributed control problem \eqref{cost}--\eqref{state2} are chosen as follows:
\begin{align*}
&\by=\sin(\pi x)\sin(\pi y), \quad \bp=\sin(\pi x)\sin(\pi y),\\
&   \ud=1-\sin(\pi x/2)-\sin(\pi y/2), \quad \alpha=1,\\
&\Uad=[0,\infty),\quad \bu=\max(\ud - \bp,0).
\end{align*}
The source term $f$ and the desired state  $\yd$ are the computed using
\begin{equation*}
f=-\Delta \by-\bu, \quad \yd=\by+\Delta \bp.
\end{equation*}
Figure \ref{femmesh} shows the initial triangulation of a square domain and its uniform refinement.

\begin{figure}[ht]
	\centering
	\begin{minipage}[b]{0.4\linewidth}
		\includegraphics[width=6.1cm]{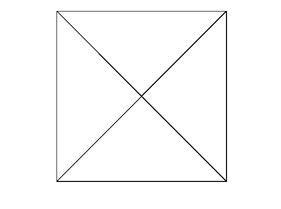}
	\end{minipage}
	\quad
	\begin{minipage}[b]{0.4\linewidth}
		\includegraphics[width=5.4cm]{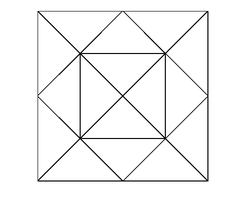}
	\end{minipage}
	\caption{Initial triangulation and its uniform refinement}
	\label{femmesh}
\end{figure}

Since $\O$ is convex, Theorems \ref{theorem.control.DB} and \ref{thm.superconvergence}
(see also the discussion before Section \ref{sec:disc.pp}), Proposition \ref{theorem.state.adj.DB} and Corollary \ref{cor.superconvergence} predict linear order of convergence for the state and adjoint variable in the energy norm, nearly quadratic order of convergence for state and adjoint variables in $L^{2}$ norm, linear order of convergence for the control variable in $L^{2}$ norm, and a nearly quadratic order of convergence for the post-processed control. These nearly-quadratic convergence properties only occur in case of a super-convergence result for the state equation (i.e. Estimate \eqref{state:scv}), which is always true for the FE methods but depends on some choice of points for the HMM scheme (see \cite{jd_nn}, and below).

\medskip 
\textbf{Conforming  FE method:}
The discrete solution is computed on several uniform grids with mesh sizes $h = \frac{1}{2^i}, i=2,\ldots,6$. The error estimates and the convergence rates of the control, the state and the adjoint variables are calculated. 
 The post-processed control is also computed. Figure \ref{fig2} displays the convergence history of the error on uniform meshes. As noticed in Remark \ref{conformingP1} and as already seen in \cite{CMAR}, we obtain linear order of convergence for the control and quadratic convergence for the post-processed control. The theoretical rates of convergence are confirmed by these numerical outputs.
%
%
%
				
				\begin{center}
					\begin{figure}[!h]
						\centering
						{\includegraphics[width=11.cm]{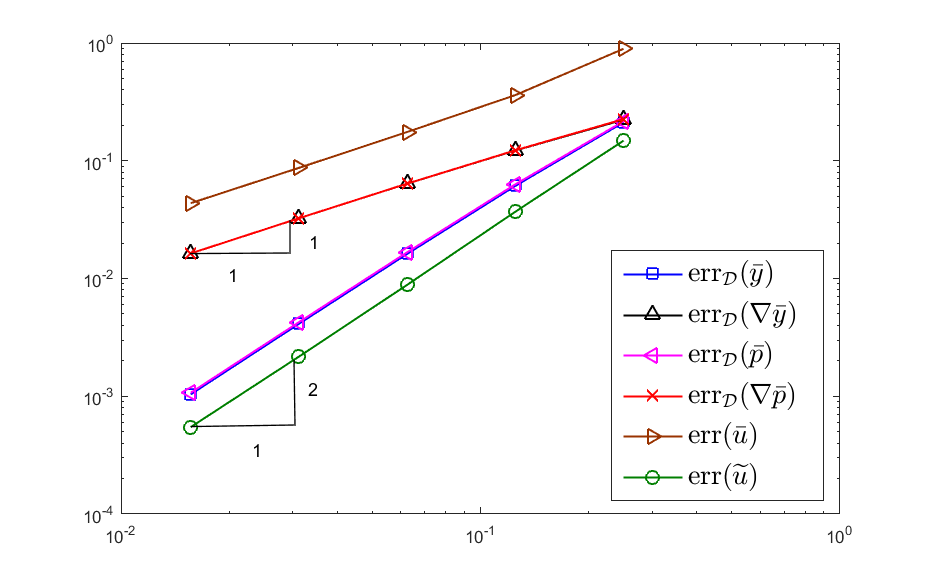}}
						\caption{Dirichlet BC, example 1, conforming FE method}
						\label{fig2}
					\end{figure}
					
				\end{center}
	\medskip	
%
%
\textbf{Non-Conforming  FE method: }
For comparison, we compute the solutions of the nc$\mathbb{P}_1$ finite element method on the same grids. The errors of the numerical approximations to state, adjoint and control variables on uniform meshes are evaluated. 
The convergence behaviour of state, adjoint and control variables is illustrated in Figure \ref{fig3}. 
Here also, these outputs confirm the theoretical rates of convergence.

%
%
%
%
%
%
%
%
		\begin{center}
			\begin{figure}[!h]
				\centering
				{\includegraphics[width=11.cm]{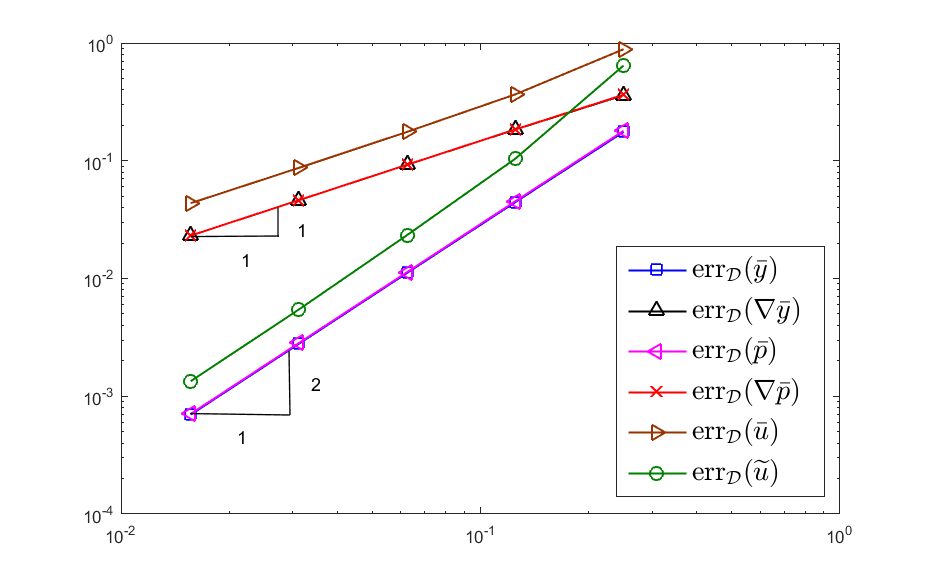}}
				\caption{Dirichlet BC, example 1, non-conforming FE method}
				\label{fig3}
			\end{figure}
			
		\end{center}

\textbf{HMM scheme:}
In this section, the schemes were first tested on a series of regular triangle meshes from \cite{benchmark}
(see Figure \ref{fig-HMMmeshes}, left) where the points $\mathcal{P}$ (see \cite[Definition 2.21]{DEH15}) are located at the center of gravity of the cells $\left({\bf{Test 1}}\right)$. For such meshes, the state and adjoint equations enjoy a super-convergence
property in $L^2$ norm \cite{bre-05-fam,jd_nn} and thus, as expected,
so does the scheme for the entire control problem after projection of the exact control.
 In Figure \ref{fig5}, the graph of the relative errors corresponding to control, state and adjoint variables against the discretisation parameter is plotted in the log−log scale. {\bf{Test 2}} focuses  on a cartesian grid where the points $\mathcal{P}$ are shifted away from the centre of gravity (see Figure \ref{fig-HMMmeshes}, right). For such a sequence of meshes, it has been
observed in \cite{jd_nn} that the HMM method can display a loss of superconvergence for the state equation. It is therefore expected that the same loss occurs, for all variables, for the control problem. This can be clearly seen in Figure \ref{fig6}.

\begin{figure}
\begin{tabular}{cc}
\resizebox{0.4\linewidth}{!}{\input{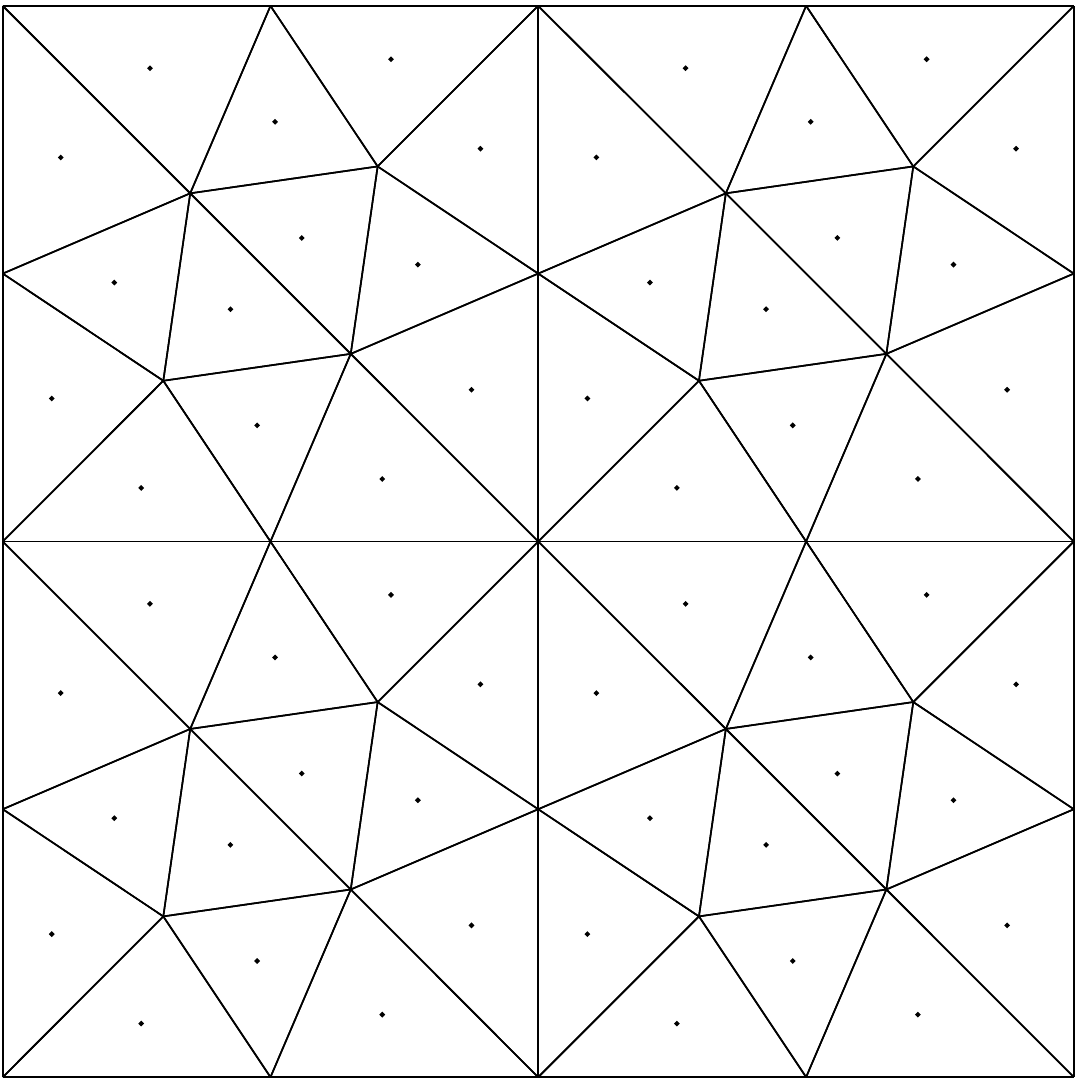_t}}&
\resizebox{0.4\linewidth}{!}{\input{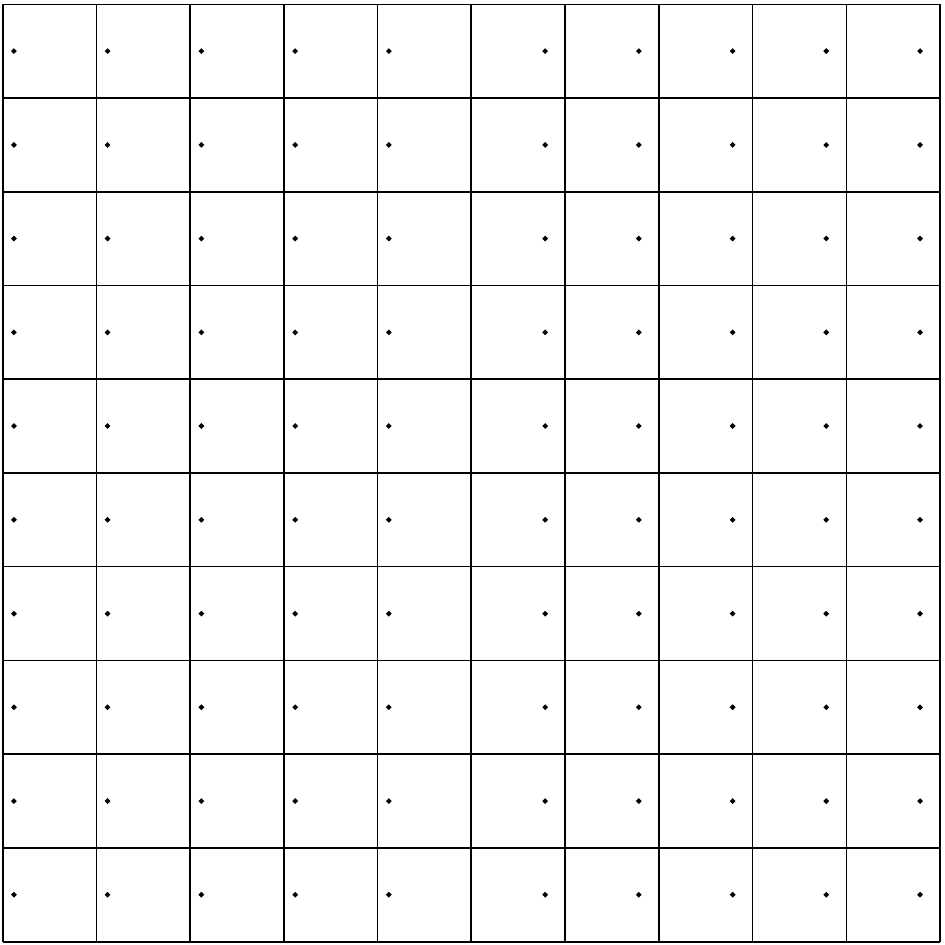_t}}
\end{tabular}
\caption{Mesh patterns for the tests using the HMM method (left: Test 1;
right: Test 2).}
\label{fig-HMMmeshes}
\end{figure}

%
%
%

		\begin{center}
			\begin{figure}[!h]
				\centering
				{\includegraphics[width=11.cm]{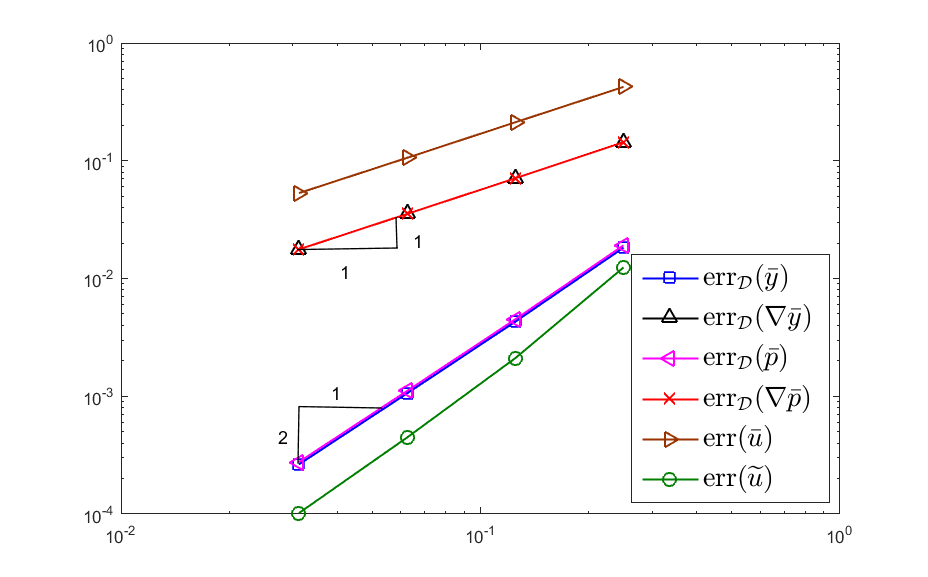}}
				\caption{Dirichlet BC, example 1, HMM $(\bf{Test 1})$}
				\label{fig5}
			\end{figure}
			
		\end{center}

%
%
%
%
%

		\begin{center}
			\begin{figure}[!h]
				\centering
				{\includegraphics[width=11.cm]{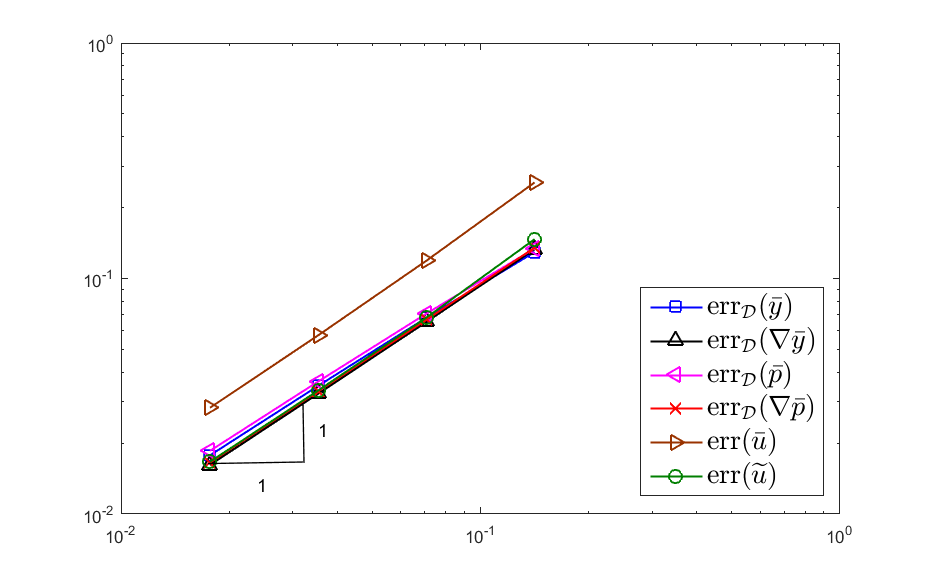}}
				\caption{Dirichlet BC, example 1,  HMM $(\bf{Test 2})$ }
				\label{fig6}
			\end{figure}
			
		\end{center}

\subsubsection{Example 2}
\label{example2}

In this example, we report the results of numerical tests
carried out for the L-shaped domain $\O= \left(-1,1\right)^2 \setminus \left( \left[0,1\right) \times \left(-1,0\right] \right)$.
 The exact solutions are chosen as follows, and correspond to $\ud=0$.
\begin{align*}
&\by(r,\theta)=\left( r^2\cos^2\theta-1\right) \left(r^2\sin^2\theta-1\right)r^{2/3}g\left(\theta\right),
\quad\Uad=[-600,-50],  \\
& \alpha=10^{-3}, \quad   \bu=  P_{[-600, -50]}\left( - \frac{1}{\alpha} \bp\right) 
\end{align*}
where $g\left(\theta\right)=\left(1-\cos\theta\right) \left(1+\sin\theta\right)$ and $\left(r,\theta\right)$ are the polar coordinates. 
The source term $f$ and the desired state  $\yd$ can be determined using the above functions.
The interest of this test-case is the loss of $H^2$-regularity property for the state
and adjoint equations.
Figure \ref{femLshapemesh} shows the initial triangulation of a L-shape domain and its uniform refinement.
\begin{figure}[ht]
\centering
\begin{minipage}[b]{0.45\linewidth}
	\includegraphics[width=6.cm]{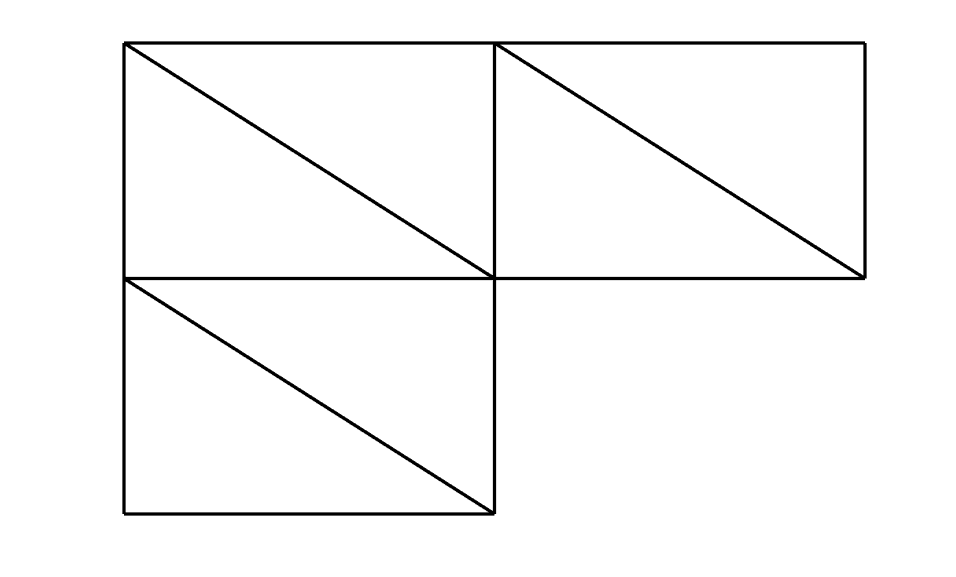}
\end{minipage}
\quad
\begin{minipage}[b]{0.45\linewidth}
	\includegraphics[width=6.cm]{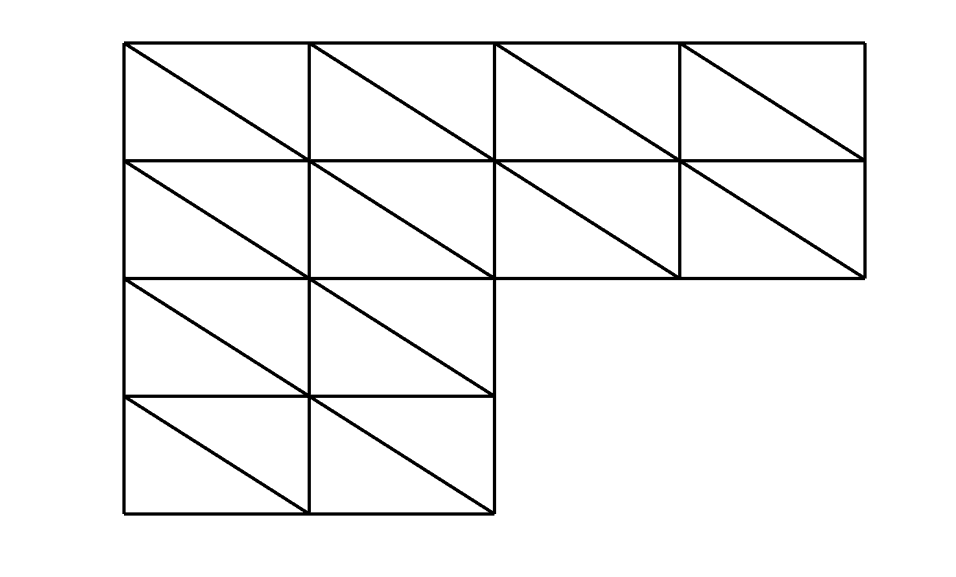}
\end{minipage}
\caption{Initial triangulation and its uniform refinement}
\label{femLshapemesh}
\end{figure}

\textbf{Conforming  FE method:}
The errors in the energy norm and the $L^2$ norm together with their orders of
convergence are evaluated. 
These numerical order of convergence clearly match the expected order of convergence, given the regularity property of the
exact solutions. The convergence rates are plotted in the log-log scale in Figure \ref{fig8}.
%
%
%
%
%
%
%
%
%
		\begin{center}
			\begin{figure}[!h]
				\centering
				{\includegraphics[width=11.cm]{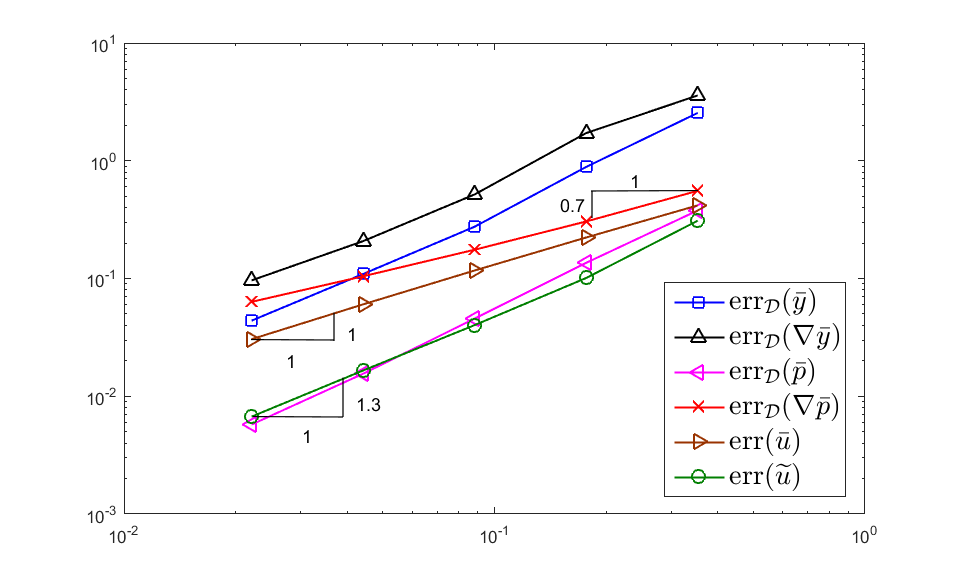}}
				\caption{Dirichlet BC, example 2 (L-shaped domain), conforming FE method}
				\label{fig8}
			\end{figure}
			
		\end{center}

\textbf{Non-Conforming  FE  method:}
		The errors between the true and computed solutions are computed 
		for different mesh sizes. In Figure \ref{fig9}, we plot the $L^2$-norm and $H^1$ norm of the error against the mesh parameter $h$. 
	
%
%
%
%

			\begin{center}
				\begin{figure}[!h]
					\centering
					{\includegraphics[width=11.cm]{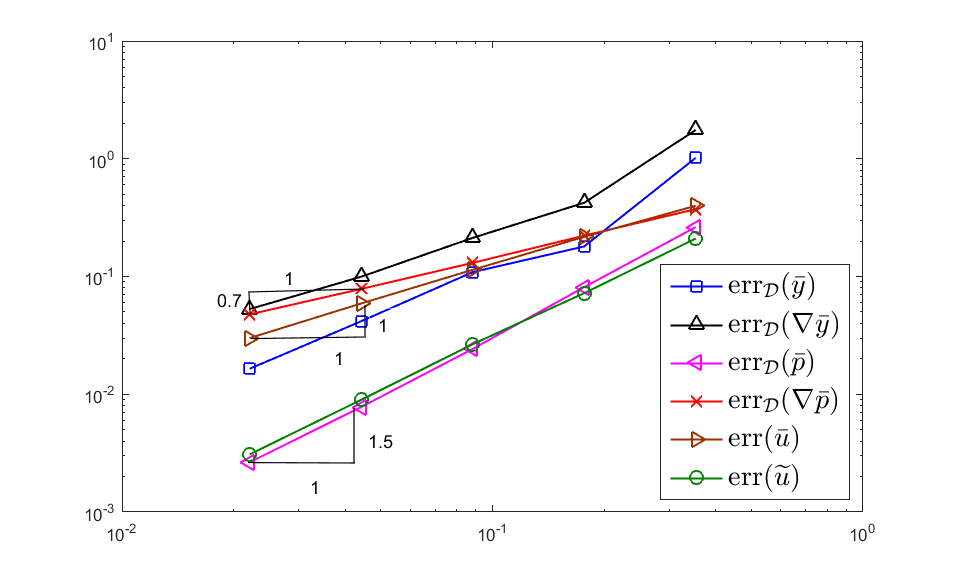}}
					\caption{Dirichlet BC, example 2 (L-shaped domain), non-conforming FE method}
					\label{fig9}
				\end{figure}
				
			\end{center}		
		
\textbf{HMM method:}
The errors corresponding to control, adjoint and state variables are computed using HMM $(\bf{Test \; 1})$. In Figure \ref{fig10}, the graph of the errors are plotted against the mesh size $h$ in the log-log scale.

%
%
%
%

		\begin{center}
			\begin{figure}[h!]
				\centering
				{\includegraphics[width=11.cm]{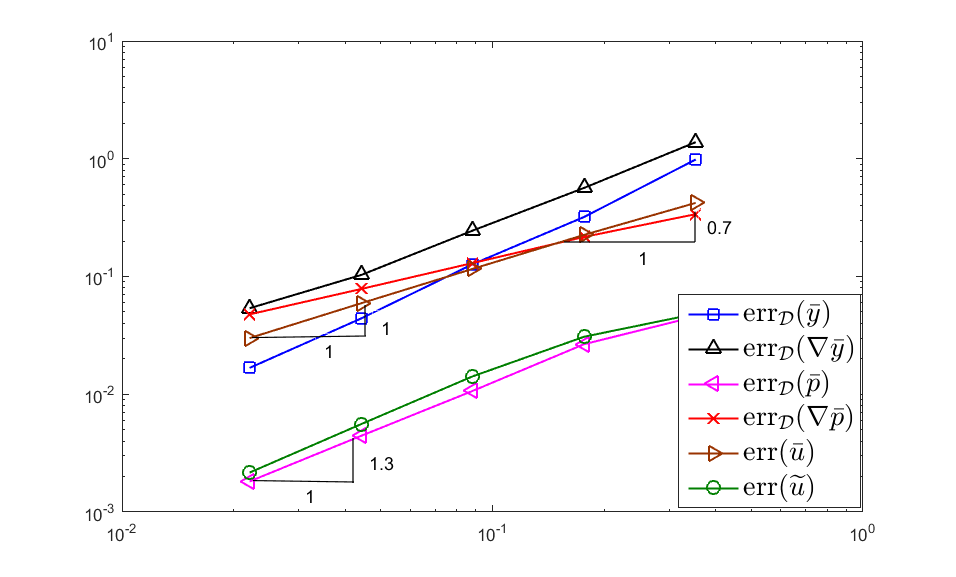}}
				\caption{Dirichlet BC, example 2 (L-shaped domain), HMM}
				\label{fig10}	
					\end{figure}
		\end{center}

Since $\O$ is non-convex, we obtain only suboptimal orders of convergence for the state and adjoint variables in the energy norms and $L^2$ norms. Also we observe suboptimal order of convergence for the post processed control. However, the control converges at the optimal rate of $h$.

\subsection{Neumann BC}

In this example, we consider the optimal control problem  defined by \eqref{model.neumann} with  $\O=\left(0,1\right)^2$ and $c_0=1$. We choose the exact state variable $\by$ and the adjoint variable $\bp$ as
\begin{equation}\label{example3}
\begin{aligned}
&\by=\frac{-1}{\pi}(\cos(\pi x)+\cos(\pi y)),  \quad \bp=\frac{-1}{\pi}(\cos(\pi x)+\cos(\pi y)),\\
&\Uad=[-750,-50],\quad \alpha=10^{-3}, \quad \bu(\x)= P_{[-750, -50]}\left( - \frac{1}{\alpha} \bp(\x) \right).
\end{aligned}
\end{equation}
We therefore have $\ud=0$. The source term $f$ and the observation $\yd$ can be computed using
\begin{equation*}
f=-\Delta \by+\by-\bu, \quad \yd=\by+\Delta \bp-\bp.
\end{equation*}
\textbf{Conforming  FE method:}
The errors and the orders of convergence for the control, state and adjoint variables are calculated for different mesh parameter $h$. The numerical errors are plotted against the discretisation parameter in the log-log scale in Figure \ref{fig11}.

%
%
%
%

		\begin{center}
			\begin{figure}[!h]
				\centering
				{\includegraphics[width=11.cm]{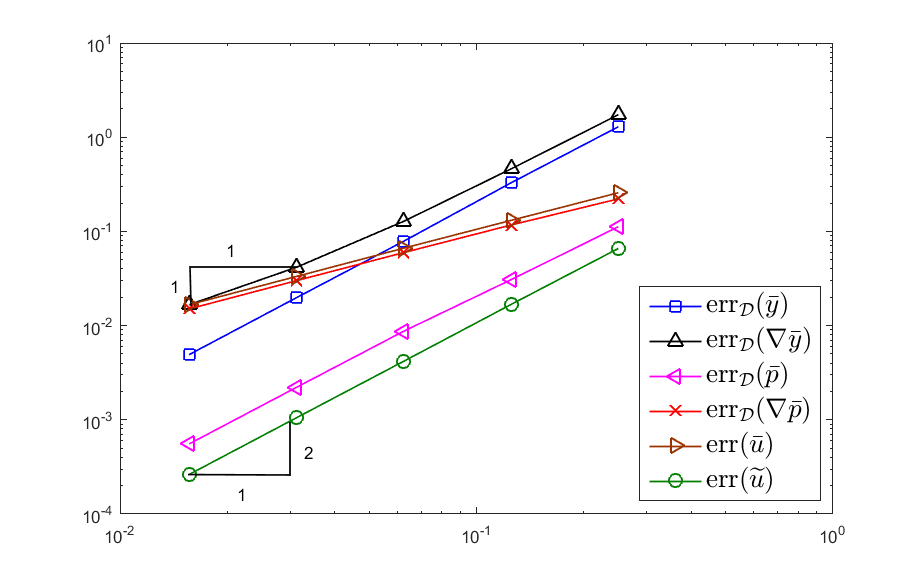}}
				\caption{Neumann BC, test case corresponding to \eqref{example3},
 conforming FE method}
				\label{fig11}
			\end{figure}
		\end{center}

\textbf{Non-Conforming  FE  method:}
The error estimates and the convergence rates of the control, the state and the adjoint variables are evaluated. 
The post-processed control is also computed. Figure \ref{fig12} displays the convergence history of the error on uniform meshes.

%
%

	\begin{center}
		\begin{figure}[!h]
			\centering
			{\includegraphics[width=11.cm]{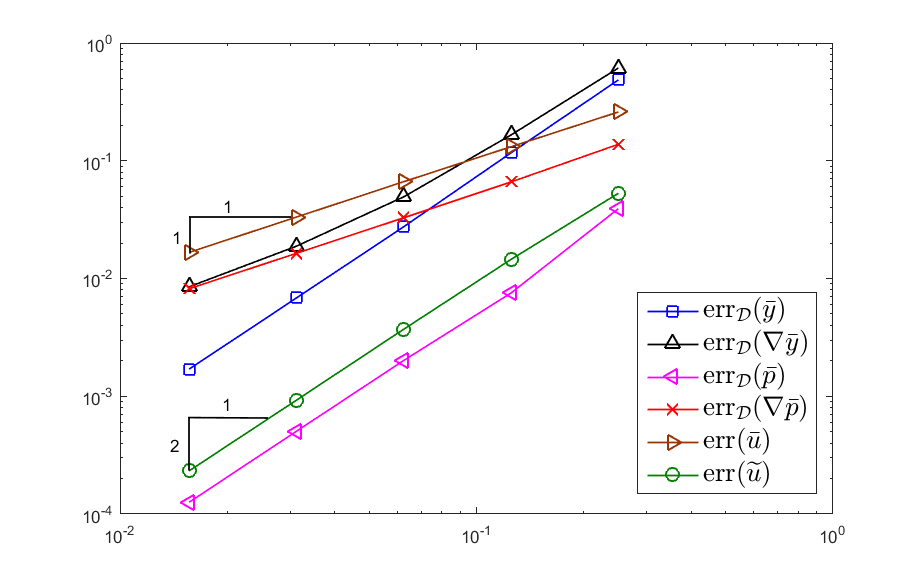}}
			\caption{Neumann BC, test case corresponding to \eqref{example3}, non-conforming FE method}
			\label{fig12}
		\end{figure}
	\end{center}

The observed orders of convergences agree with the predicted ones as seen in the figures.

\section{Appendix}

\subsection{$L^\infty$ estimates for the HMM method}\label{sec:LinftyHMM}

Let us first briefly recall the GD corresponding to the HMM method 
\cite{koala,dro-12-gra}. We consider a polytopal mesh $\polyd=(\mesh,\edges,\centers)$
as defined in \cite[Definition 7.2]{koala}: $\mesh$ is the set of cells (generic notation $K$),
$\edges$ is the set of faces (generic notation $\sigma$) and $\centers$ is a set
made of one point per cell (notation $\x_K$ -- this point does not need to be the center
of mass of $K$ in general). If $K\in\mesh$ then $\edges_K$ is the set of faces of
$K$. For $\edge\in\edgescv$, $|\edge|$ is the measure of $\edge$,
$\overline{\x}_\edge$ is the center of mass of $\edge$,
$d_{K,\edge}=(\overline{\x}_\edge-\x_K)\cdot\bfn_{K,\edge}$ is the orthogonal distance between
$\x_K$ and $\edge$, $\bfn_{K, \sigma}$ is the outer normal to $K$ on
$\edge$, and $D_{K,\edge}$ is the convex hull of $\x_K$ and $\edge$.
An HMM GD is defined the following way.

\begin{itemize}
\item The degrees of freedom are made of one value in each cell and one value on each
edge, so $X_{\disc,0}=\{v=((v_K)_{K\in\mesh},(v_\edge)_{\edge\in\edgescv})\,:\,
v_K\in\R\,,\;v_\edge\in\R\,,\;v_\edge=0\mbox{ if $\edge\subset \partial\Omega$}\}$.
\item The reconstructed functions are piecewise constant in the cells: for $v\in X_{\disc,0}$,
$\Pi_\disc v\in L^2(\O)$ is defined by $(\Pi_\disc v)_{|K}=v_K$ for all $K\in\mesh$.
\item The reconstructed gradient is piecewise constant in the sets $(D_{K,\edge})_{K\in\mesh,\,
\edge\in\edgescv}$: if $v\in X_{\disc,0}$, then $\nabla_\disc v\in L^2(\O)^n$ is defined by
\begin{align*}
&\forall K\in\mesh\,,\;\forall \edge\in\edgescv\,,\\
&\quad (\nabla_\disc v)_{|D_{K,\edge}}=\overline{\nabla}_K v + \frac{\sqrt{n}}{d_{K,\edge}}(v_\edge-v_K-\overline{\nabla}_K v
\cdot(\overline{\x}_\edge-\x_K))\bfn_{K,\edge},
\end{align*}
where
\[
\overline{\nabla}_K v = \frac{1}{|K|}\sum_{\edge\in\edgescv}|\edge|v_\edge\bfn_{K,\edge}
\]
(this gradient is perhaps the most natural choice; though not the only possible choice
within the HMM family; see \cite{koala,dro-12-gra} for a more complete presentation).
\end{itemize}

Under standard local regularity assumptions on the mesh,
\cite[Proposition 12.14 and 12.15]{koala} yield
the following error estimate on $\WS_\disc$: if $A$ is Lipschitz-continuous and
$\psi\in H^2(\O)$, for some $C$ not depending on $\psi$ or $\polyd$:
\be\label{est.WS}
\WS_\disc(\psi)\le Ch\norm{\psi}{H^2(\O)}.
\ee

\medskip

Under the quasi-uniformity assumption on the mesh, we establish the following
$L^\infty$ error estimate and bound for the HMM.

\begin{theorem}[$L^\infty$ estimates for HMM]\label{th:Linfty}
Consider the dimension $n=2$ or $3$. Let $\polyd$ be a polytopal mesh and $\disc$ be an HMM gradient discretisation.
Take $\varrho\ge  \theta_\polyd+\zeta_\disc+\chi_\polyd$, where $\theta_{\polyd}$ and $\zeta_\disc$
are defined by \cite[Eqs. (7.8) and (12.18)]{koala}, and
\[
\chi_\polyd=\max_{K\in\mesh}\frac{h^n}{|K|}.
\]
Assume that $A$ is Lipschitz-continuous, that $\Omega$ is convex and that $F\in L^2(\O)$. 
There exists then $C$, depending only on $\O$, $A$ and $\varrho$, such that,
if $\psi$ solves \eqref{base} and $\psi_\disc$ solves \eqref{base.GS},
\be\label{Linfty.error}
\norm{\Pi_\disc\psi_\disc -\psi_\mesh}{L^\infty(\O)}\le C \norm{F}{}\left\{\begin{array}{ll}h|\ln(h)|&\mbox{ if $n=2$},\\
h^{1/2}&\mbox{ if $n=3$},\end{array}\right.
\ee
where $(\psi_\mesh)_{|K}=\psi(\x_K)$ for all $K\in\mesh$, and
\be\label{Linfty.est.HMM}
\norm{\Pi_\disc\psi_\disc}{L^\infty(\O)}\le C \norm{F}{}.
\ee
\end{theorem}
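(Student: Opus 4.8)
The plan is to reduce both estimates to a single cellwise pointwise error bound, obtained by duality against a regularised Green's function. First I would dispose of the bound \eqref{Linfty.est.HMM} using \eqref{Linfty.error}: since $\O$ is convex and $A$ is Lipschitz, elliptic regularity gives $\psi\in H^2(\O)$ with $\norm{\psi}{H^2(\O)}\lesssim\norm{F}{}$, and because $n\le 3$ the embedding $H^2(\O)\hookrightarrow C^0(\overline{\O})$ yields $\norm{\psi_\mesh}{L^\infty(\O)}\le\norm{\psi}{L^\infty(\O)}\lesssim\norm{F}{}$; hence \eqref{Linfty.est.HMM} follows from \eqref{Linfty.error} by a triangle inequality. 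It therefore suffices to prove \eqref{Linfty.error}.

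Since $\Pi_\disc\psi_\disc$ and $\psi_\mesh$ are piecewise constant, $\norm{\Pi_\disc\psi_\disc-\psi_\mesh}{L^\infty(\O)}=\max_{K\in\mesh}|(\Pi_\disc\psi_\disc)_{|K}-\psi(\x_K)|$. I would fix a cell $K_0$, set $g=|K_0|^{-1}\carac_{K_0}$, let $G\in H^1_0(\O)$ solve $-\div(A\nabla G)=g$, and let $G_\disc$ solve the gradient scheme \eqref{base.GS} with source $g$. Using the symmetry of $a_\disc$ and \eqref{base.GS} for both $\psi_\disc$ (source $F$) and $G_\disc$ (source $g$) gives $(\Pi_\disc\psi_\disc)_{|K_0}=(F,\Pi_\disc G_\disc)$, while the analogous continuous computation gives $|K_0|^{-1}\int_{K_0}\psi=(F,G)$. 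Subtracting, the cell error splits as
\[
(\Pi_\disc\psi_\disc)_{|K_0}-\psi(\x_{K_0})=(F,\Pi_\disc G_\disc-G)+\Big(\frac{1}{|K_0|}\int_{K_0}\psi-\psi(\x_{K_0})\Big).
\]
The second term is controlled by the approximation property \eqref{approx.xK} applied to $\psi\in H^2(K_0)$: it is $\lesssim_\eta h^2|K_0|^{-1/2}\norm{\psi}{H^2(K_0)}\lesssim h^{2-n/2}\norm{F}{}$, where the quasi-uniformity bound $|K_0|\gtrsim h^n$ (encoded in $\chi_\polyd$) and elliptic regularity are used; this is already of the target order ($h$ for $n=2$, $h^{1/2}$ for $n=3$). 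The first term is bounded by $\norm{F}{}\,\norm{\Pi_\disc G_\disc-G}{}$, so the whole statement reduces to an $L^2$ estimate of the discrete Green's function.

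The crux is thus to prove $\norm{\Pi_\disc G_\disc-G}{}\lesssim h^{2-n/2}$ (up to a logarithm when $n=2$). The generic estimate of Theorem \ref{th:error.est.PDE} is by itself too weak here: elliptic regularity on the convex domain gives $\norm{G}{H^2(\O)}\lesssim\norm{g}{}\simeq|K_0|^{-1/2}\simeq h^{-n/2}$, so \eqref{est.WS} only yields $\WS_\disc(G)\lesssim h^{1-n/2}$, i.e. an energy error of order $O(1)$ ($n=2$) or $O(h^{-1/2})$ ($n=3$). The missing power of $h$ in the $L^2$ norm must come from an Aubin--Nitsche / improved-$L^2$ argument of the type developed for HMM with mass-centre points in \cite{jd_nn}: such an estimate bounds $\norm{\Pi_\disc G_\disc-G}{}$ by $h$ times the energy error plus a term of the form $h^2\norm{g}{}$, each of which is exactly $O(h^{2-n/2})$. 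I would invoke (or, since $g=|K_0|^{-1}\carac_{K_0}\notin H^1(\O)$ and the clean super-convergence bound \eqref{state:scv} does not apply verbatim, re-derive) this improved estimate.

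This last step is the one I expect to be the main obstacle, precisely because the source $g$ is only in $L^2$ and concentrated on a single cell, so the improved estimate must be established for a genuinely singular right-hand side rather than the smooth sources for which \eqref{state:scv} is stated; a localised or weighted refinement of the energy argument, exploiting the convexity of $\O$ and the decay of the continuous Green's function away from $K_0$, is likely needed. I expect the borderline Sobolev scaling of the two-dimensional fundamental solution to be exactly the origin of the extra $|\ln(h)|$ factor in \eqref{Linfty.error}, whereas the subcritical scaling in dimension $3$ produces the clean $h^{1/2}$ rate.
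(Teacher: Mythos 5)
Your reduction of \eqref{Linfty.est.HMM} to \eqref{Linfty.error} matches the paper's, and your duality identities are correct (they do use the symmetry of $a_\disc$, available since $A$ is symmetric). Your bound on the consistency term $\dashint_{K_0}\psi-\psi(\x_{K_0})$ also gives the right order $h^{2-n/2}$, modulo one repairable slip: \eqref{approx.xK} is stated at the centroid $\overline{\x}_K$, whereas $\psi_\mesh$ in Theorem \ref{th:Linfty} is built from the points $\x_K\in\centers$, which for HMM need not be centroids; for $n\le 3$ a scaled embedding $H^2(K)\hookrightarrow C^{0,\gamma}(K)$ fixes this. The genuine gap is exactly where you flag it: the pivotal estimate $\norm{\Pi_\disc G_\disc-G}{}\lesssim h^{2-n/2}$ (with a logarithm when $n=2$) is never proved, and none of the tools available in the paper yields it. Theorem \ref{th:error.est.PDE} with \eqref{est.WS} gives only $h\norm{g}{}\sim h^{1-n/2}$; the super-convergence \eqref{state:scv} and the improved $L^2$ estimate of \cite{jd_nn} require an $H^1$ source, while $g=|K_0|^{-1}\carac_{K_0}$ is only in $L^2$ with $\norm{g}{}\sim h^{-n/2}$. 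Moreover, since the HMM reconstruction $\Pi_\disc G_\disc$ is piecewise constant, any Aubin--Nitsche-type bound for it necessarily carries an interpolation term of size $h\norm{\nabla G}{}$, and controlling that term at the target order forces you through pointwise bounds on the regularised Green function ($\norm{\nabla G}{}^2=(g,G)\le\norm{G}{L^\infty(K_0)}$, with $\norm{G}{L^\infty}\lesssim h^{2-n}$ up to logarithms, \`a la Gr\"uter--Widman/Stampacchia) --- classical but substantial machinery entirely absent from the paper. As written, your central lemma is a research program rather than a proof.

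It is worth knowing that the paper's actual argument is completely different and far more elementary: no duality and no Green's function. It interpolates $\psi$ by its nodal values $v=((\psi(\x_K))_K,(\psi(\overline{\x}_\edge))_\edge)$, combines the $O(h\norm{F}{})$ energy estimate from \eqref{est.WS} with the $H^2$-approximation property of this interpolant (\cite[Proposition A.6]{koala}) to get $\norm{\nabla_\disc(v-\psi_\disc)}{}\lesssim h\norm{F}{}$, then applies the discrete Sobolev embedding \eqref{disc.sob} with a constant \emph{linear in $q$}, converts $L^q$ to $L^\infty$ on the piecewise-constant error via the inverse inequality $\norm{\cdot}{L^\infty(\O)}\le(\chi_\polyd h^{-n})^{1/q}\norm{\cdot}{L^q(\O)}$ furnished by quasi-uniformity, and finally optimises in $q$ (taking $q\sim|\ln h|$ for $n=2$, $q=6$ for $n=3$). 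In particular, in the paper the $h|\ln(h)|$ and $h^{1/2}$ rates come from this optimisation, not from the Sobolev scaling of the two-dimensional fundamental solution --- though your conjecture about the origin of the logarithm is internally consistent within your own route. If your programme were completed, it could conceivably sharpen the 3D rate; but compared with the paper's self-contained four-step argument, it requires importing Green-function theory and a new improved $L^2$ estimate for genuinely singular $L^2$ sources, and that key step is missing.
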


\begin{proof} In this proof, $X\lesssim Y$ means that $X\le MY$ for some $M$
depending only on $\O$, $A$ and $\varrho$.
The theorem's assumptions ensure that $\psi\in H^2(\O)\cap H^1_0(\Omega)\subset C(\overline{\O})$. Let
$v=((\psi(\x_K))_{K\in\mesh},(\psi(\overline{\x}_\edge))_{\edge\in\edges})\in
X_{\disc,0}$. By the proof of \cite[Proposition A.6]{koala} (see also  \cite[(A.10)]{koala}),
\[
\norm{\Pi_\disc v-\psi}{}+\norm{\nabla_\disc v-\nabla\psi}{}
\lesssim h \norm{\psi}{H^2(\O)}\lesssim h\norm{F}{}.
\]
A use of \eqref{est.WS} and the triangle inequality then gives
\be\label{Linfty.1}
\norm{\nabla_\disc (v-\psi_\disc)}{}\lesssim h \norm{F}{}.
\ee
\cite[Lemma B.12]{koala} establishes the following discrete Sobolev embedding,
for all $q\in [1,6]$ if $n=3$ and all $q\in [1,+\infty)$
if $n=2$:
\be\label{disc.sob}
\forall w\in X_{\disc,0}\,,\;\norm{\Pi_\disc w}{L^q(\O)}\lesssim q \norm{\nabla_\disc w}{}.
\ee
An inspection of the constants appearing
in the proof of \cite[Lemma B.12]{koala} shows that the inequality $\lesssim$ in \eqref{disc.sob}
is independent of $q$. Substitute $w=v-\psi_\disc$ in \eqref{disc.sob} and use   \eqref{Linfty.1} to obtain
\[
\norm{\Pi_\disc (v-\psi_\disc)}{L^q(\O)}\lesssim h q \norm{F}{}.
\]
Let $K_0\in\mesh$ be such that $\norm{\Pi_\disc(v-\psi_\disc)}{L^\infty(\O)}=|\Pi_\disc(v-\psi_\disc)_{|K_0}|$ and
write
\begin{align*}
\norm{\Pi_\disc(v-\psi_\disc)}{L^\infty(\O)}={}&|K_0|^{-1/q}\left(|K_0|\,|\Pi_\disc(v-\psi_\disc)_{|K_0}|^q\right)^{1/q}\\
\le{}& \chi_\polyd^{1/q}h^{-\frac{n}{q}} \norm{\Pi_\disc (v-\psi_\disc)}{L^q(\O)}
\lesssim h^{1-\frac{n}{q}} q \norm{F}{}.
\end{align*}
Minimising $q\mapsto h^{1-\frac{n}{q}}q$ over $q\in [1,\infty)$ if $n=2$, or taking $q=6$ if $n=3$,
yields
\begin{align*}
\norm{\Pi_\disc(v-\psi_\disc)}{L^\infty(\O)}
\lesssim \norm{F}{}\left\{\begin{array}{ll}h|\ln(h)|&\mbox{ if $n=2$},\\
h^{1/2}&\mbox{ if $n=3$}.\end{array}\right.
\end{align*}
This concludes \eqref{Linfty.error} since $\Pi_\disc v=\psi_\mesh$.
Estimate \eqref{Linfty.est.HMM} follows from \eqref{Linfty.error} by using the triangle inequality,
the estimate $\norm{\psi_\mesh}{L^\infty(\O)}\le \norm{\psi}{L^\infty(\O)}\lesssim \norm{F}{}$ and
the property $\max(h^{1/2},h|\ln(h)|)\lesssim 1$.
\end{proof}

\thanks{\textbf{Acknowledgement}: The first author acknowledges the funding support from the Australian Government through the Australian Research Council's Discovery Projects funding scheme (pro\-ject number DP170100605). 
The second and third authors acknowledge the funding support from the DST project SR/S4/MS/808/12.}

\bibliographystyle{abbrv}
\bibliography{jerome_nn_control}

\end{document}